\newcommand{\N}{\mathbb{N}}
\newcommand{\R}{\mathbb{R}}
\newcommand{\E}{\mathbb{E}}
\newcommand{\Db}{\mathbb{D}}
\newcommand{\Var}{\mathrm{Var}}
\newcommand{\cov}{\mathrm{cov}}
\newcommand{\Beta}{\mathtt{Beta}}
\newcommand{\Gam}{\mathtt{Gamma}}
\newcommand{\Dir}{\mathtt{Dir}}
\newcommand{\PD}{\mathtt{PD}}
\newcommand{\Address}{{
\bigskip
\footnotesize

\textsc{Department of Mathematics \& Statistics, McMaster University, Hamilton, ON, L8S 4K1, Canada}\par\nopagebreak
\textit{E-mail address}: \texttt{\{shuifeng, paguyoj\}@mcmaster.ca}
}}
\def\bal#1\eal{\begin{align*}#1\end{align*}}
\newtheorem{theorem}{Theorem}[section]
\newtheorem{lemma}[theorem]{Lemma}
\newtheorem{proposition}[theorem]{Proposition}
\title[Central limit theorems and the hierarchical Dirichlet process]{Central limit theorems associated with the hierarchical Dirichlet process \hspace{-0.35cm} \normalfont{\textsuperscript{1}}}
\author{Shui Feng}
\author{J. E. Paguyo}
\date{}
\subjclass[2020]{60G57, 62F15}
\keywords{Poisson-Dirichlet distribution, Dirichlet process, hierarchical Dirichlet process, central limit theorem, homozygosity, diversity index, Bayesian nonparametrics.}
\begin{document}

\begin{abstract}
The hierarchical Dirichlet process is a discrete random measure used as a prior in Bayesian nonparametrics and motivated by the study of groups of clustered data. We study the asymptotic behavior of the power sum symmetric polynomials for the vector of weights of the hierarchical Dirichlet process as the concentration parameters tend to infinity. We establish central limit theorems and obtain explicit representations for the asymptotic variances, with the latter clearly showing the impact of the hierarchical structure. These objects are related to the homozygosity in population genetics, the Simpson diversity index in ecology, and the Herfindahl-Hirschman index in economics.
\end{abstract}

\footnotetext[1]{Supported by the Natural Sciences and Engineering Research Council of Canada.}

\maketitle

\section{Introduction}

Let
\bal
\Delta  = \left\{\bm{x}= (x_1, x_2, \ldots): 0\leq x_i\leq 1, \text{for $ i=1,2,\ldots$, and } \sum_{i=1}^\infty x_i =1\right\}
\eal
be the infinite-dimensional {\em simplex}. For any $m \geq 2$ and $\bm{x} \in \Delta$, define the {\em power sum symmetric polynomials} as
\bal
\varphi_m(\bm{x})=\sum_{i=1}^\infty x_i^m. 
\eal
Consider a population of individuals of various types, with type distribution given by $\bm{x} \in \Delta$, and take a sample of size $m \geq 2$ from the population. Then $\varphi_m(\bm{x})$ is the probability that all individuals in the sample are of the same type. This observation makes $\varphi_m(\bm{x})$ a useful quantity in many contexts.  

In population genetics, individuals are categorized by their allellic types and $\varphi_m(\bm{x})$ is called the {\em homozygosity} for $m=2$ and the {\em $m$th order homozygosity} for $m>2$. The function $\varphi_2(\bm{x})$ is called the {\em Simpson index} in ecology, which is used to measure the abundance of species and the concentration of the population \cite{Simpson49}. The square root of  $\varphi_2(\bm{x})$ is called the {\em Herfindahl-Hirschman index} in economics, and is used to measure the relative size of firms and the strength of competition between them \cite{Her50, Hir45}. In the Bayesian framework, the deterministic $\bm{x}$ is replaced by random vectors, and the resulting symmetric polynomials become random indexes. The focus of this paper will be on several such models.

For any $\alpha > 0$, let $U_1, U_2, \ldots$ be independent and identically distributed $\Beta(1,\alpha)$ random variables. Let
\bal
V_1 = U_1, \qquad V_n = \prod_{k=1}^{n-1} (1 - U_k)U_n, \quad \text{for $n \geq 2$},
\eal
and let $\bm{V} = \bm{V}_\alpha = (V_1, V_2, \ldots)$. The distribution of $\bm{V}$ is called the {\em GEM distribution}, and the distribution of $\bm{V}$ reordered in descending order, $(P_1(\alpha), P_2(\alpha), \ldots) = (V_{(1)}, V_{(2)}, \ldots)$, is called the {\em Poisson-Dirichlet distribution} with concentration parameter $\alpha$, denoted by $\PD(\alpha)$. The {\em Poisson-Dirichlet distribution} was introduced by Kingman \cite{Kin75}, and it arises, among other places, as the stationary distribution of the ranked allele frequencies in the infinitely many alleles model \cite{Wat76}, as the limiting distribution of the longest cycles in a uniformly random permutation \cite{SL66}, and as the limiting distribution of the largest prime divisors of a uniformly random integer \cite{Bil72}. 

Independently of $\bm{V}$, let $S$ be a Polish space and let $M_1(S)$ denote the space of probabilities on $S$ equipped with the weak topology. For any $\nu$ in $M_1(S)$, let $\xi_1, \xi_2, \ldots$ be independent and identically distributed random variables with common distribution $\nu$. The {\em Dirichlet process} with concentration parameter $\alpha$ and base distribution $\nu$ is the random probability measure
\bal
\Xi_{\alpha, \nu} = \sum_{i=1}^\infty V_i \delta_{\xi_i}. 
\eal
The existence of the Dirichlet process was established by Ferguson \cite{Fer73}. Pitman and Yor \cite{PY97} introduced two parameter generalizations of the Poisson-Dirichlet distribution and the Dirichlet process, called the  {\em two-parameter Poisson-Dirichlet distribution} and the {\em Pitman-Yor process}, respectively. The asymptotic behavior of the Poisson-Dirichlet distribution has been intensively studied in the past few decades; see \cite{Fen10, GV17} and the references therein. 

The Dirichlet process is a fundamental prior in Bayesian nonparametrics for the study of clustered data. It is also a building block for other priors in the study of more complex data. One particular example is the hierarchical Dirichlet process, introduced by Teh et al. \cite{TJBB06}, which serves as a prior for the study of clustered groups of data. 
More specifically, for $\beta > 0$ and $n\geq 1$, let $W_n \sim \Beta(\beta V_n, \beta(1 - \sum_{k=1}^n V_k))$ and observe that $W_1, W_2, \ldots$ are conditionally independent given $\bm{V}$. Independently, let $\xi_1, \xi_2, \ldots $ be independent and identically distributed with common diffuse distribution $\nu$. Define
\bal
Z_1 = W_1, \qquad Z_n = \prod_{k=1}^{n-1} (1 - W_k)W_n, \quad \text{for $n \geq 2$},
\eal
and let $\bm{Z} = \bm{Z}_{\alpha, \beta} = (Z_1, Z_2, \ldots)$. The {\em hierarchical Dirichlet process} (henceforth HDP) restricted to one group with level two concentration parameter $\beta$, level one concentration parameter $\alpha$, and base distribution $\nu$ is the random probability measure 
\bal
\Xi_{\alpha,\beta, \nu} = \sum_{i=1}^\infty Z_i\delta_{\xi_i}. 
\eal
Simply put, the HDP is a Dirichlet process with base distribution being a draw from another Dirichlet process, or equivalently
\begin{equation}\label{HDP}
\Xi_{\alpha,\beta, \nu} \stackrel{D}{=}\Xi_{\beta, \Xi_{\alpha, \nu}},
\end{equation}
where $\stackrel{D}{=}$ denotes equality in distribution. 

Given an integer $L\geq 1$, the general HDP with $L$  groups is a collection of random measures $\Xi^1_{\alpha,\beta, \nu},\ldots, \Xi^L_{\alpha,\beta, \nu} $ that share the same Dirichlet process base measure $\Xi_{\alpha,\nu}$ and are conditionally independent and identically distributed given the base measure. We refer to the base Dirichlet process $\Xi_{\alpha,\nu}$ as the {\em level one Dirichlet process}, and $\Xi^k_{\alpha,\beta,\nu}$ for $1\leq k \leq L$ as the {\em level two Dirichlet process}. The level one Dirichlet process describes the global impact, while the level two Dirichlet processes are group specific. For more on the hierarchical Dirichlet process and other hierarchical Bayesian nonparametric models, see \cite{CLP18, Teh06, TJ10, TJBB06}. 

The hierarchical structure of the HDP makes the analysis of its asymptotic behavior more challenging, but there has been some recent progress. Camerlenghi et al. \cite{CLOP19} developed a distribution theory for hierarchical random measures generated via normalization, which encompasses the HDP and the {\em hierarchical Pitman-Yor process}. More recently, Feng \cite{Fen23} established the law of large numbers and large deviation principles for the HDP and its mass as both concentration parameters tend to infinity. 

For any $n\geq 1$, let $(W_{n1}, \ldots, W_{nn})$ be Dirichlet distributed as $\Dir(\frac{\alpha}{n}, \ldots, \frac{\alpha}{n})$, independently of $\xi_1, \xi_2, \ldots$. Let
\bal
\Xi_{\alpha,\nu;n} =\sum_{i=1}^n W_{ni}\delta_{\xi_i}.
\eal
It is known that $\Xi^n_{\alpha,\nu}$ converges in distribution to $\Xi_{\alpha,\nu}$ as $n$ tends to infinity \cite{IshZar02}. If we replace $\Xi_{\alpha,\nu}$ with $\Xi_{\alpha,\nu;n}$ in equation (\ref{HDP}), the resulting model becomes the {\em finite dimensional hierarchical Dirichlet process} (henceforth FDHDP) with dimension $n$, concentration parameter $\alpha$, and base distribution $\nu$. Denote the FDHDP as $\Xi_{\alpha,\beta, \nu;n}$. Then
\bal
\Xi_{\alpha,\beta, \nu;n} = \sum_{i=1}^n Z_{ni}\delta_{\xi_i},
\eal
where $\bm{Z}_n = (Z_{n1}, \ldots, Z_{nn})$ follows the $\Dir(\beta W_{n1}, \ldots, \beta W_{nn})$ distribution. It was shown in \cite{TJBB06} that $\Xi_{\alpha,\beta, \nu;n}$ converges in distribution to $\Xi_{\alpha,\beta, \nu}$ as $n$ tends to infinity.   

It is clear that the weight vectors $\bm{V}$, $\bm{Z}$, and $\bm{Z}_n$ are all $\Delta$-valued random vectors. For all $1\leq k\leq L$ and $n\geq 1$, let ${\bf Z}_k=(Z_{k, 1}, Z_{k,2}, \ldots)$ be such that
\bal
\Xi^k_{\alpha,\beta,\nu}=\sum_{i=1}^\infty Z_{k,i}\delta_{\xi_i},
\eal
and define the set
\bal
M_{n, L} := \{\bm{n} = (n_1,\ldots, n_L) : \text{$n_k \geq 0$ for all $1 \leq k \leq L$ and $n_1 + \dotsb + n_L = n$}\}.
\eal
The {\em $m$th order homozygosity} of the Dirichlet process, the HDP (with one group), the HDP (with $L$ groups), and the FDHDP are defined, respectively, as
\bal
H_m(\alpha) &=\varphi_m(\bm{V}) = \sum_{i=1}^\infty V_i^m, \\
H_m(\alpha,\beta) &=\varphi_m(\bm{Z}) = \sum_{i=1}^\infty Z_i^m, \\
H^L_m(\alpha,\beta) &= \sum_{i=1}^\infty \sum_{\bm{m} \in M_{m,L}} \frac{1}{L^m} \prod_{k=1}^L Z_{k,i}^{m_k}, \\
H_{m, n}(\alpha,\beta) &=\varphi_m(\bm{Z}_n) = \sum_{i=1}^n Z_{ni}^m. 
\eal
For simplicity, we will use the generic term homozygosity for all orders in the sequel. We are not aware of the study of the homozygosity of the HDP with more than one group. Our definition for the homozygosity of the HDP with $L$ groups is for random samples of size $m$ where each sample is drawn equally likely from any of the $L$ groups.  

In this paper, we consider the asymptotic behavior of the homozygosity as $\alpha$ and $\beta$ both tend to infinity. In population genetics, the homozygosity is a statistic which is used to test for departures from neutrality. Large values of $\alpha$ and $\beta$ correspond to a locus consisting of a large number of sites, with a fixed mutation rate at each site \cite{Gri79}.  In the context of Bayesian inference, the large $\alpha$ and $\beta$ limits are associated with the Dirichlet posterior limits for large sample size. In ecology, this large $\alpha$ and $\beta$ limiting regime corresponds to a system where there are a large number of species with small proportions. Our results confirm the Gaussian behavior in these regimes. 

The central limit theorem for $H_m(\alpha)$ was first established by Griffiths \cite{Gri79} for the case $m=2$ and by Joyce, Krone, and Kurtz in \cite{JKK02} for the case $m \geq 2$, where they also obtained a Gaussian limit theorem associated with the Ewens sampling formula. Subsequently they proved a central limit theorem for the homozygosity in an infinite alleles model with selective overdominance \cite{JKK03}. Handa \cite{Han09} established the central limit theorem for the homozygosity of the Pitman-Yor process, and Feng \cite{Fen10} proved a Gaussian limit theorem associated with the Pitman sampling formula. Xu \cite{Xu09, Xu11} obtained central limit theorems associated with the transformed two-parameter Poisson-Dirichlet distribution. Large deviation principles for the homozygosity were established by Dawson and Feng \cite{DF06, DF16} and moderate deviation principles by Feng and Gao \cite{FG08, FG10}. 

The objectives of this paper are to establish central limit theorems for the homozygosities $H_m(\alpha,\beta)$, $H_{m,n}(\alpha,\beta)$, and $H^L_m(\alpha,\beta)$ as the concentration parameters tend to infinity. For ease of reading, we first establish the result for the one group HDP, followed by the result for the HDP with $L$ groups, highlighting the additional complications. We also obtain law of large numbers and a large deviation principle. These results are not only theoretical generalizations of the corresponding results for the Dirichlet process, but also have strong potential for applications in many areas including ecology, economics, and population genetics.
 
\subsection{Main Results}

Let $\Gamma(x)$ be the gamma function. Let ${n \brack k}$ denote the {\em unsigned Stirling numbers of the first kind}, which are defined to be the coefficients of the {\em rising factorial}
\bal
(x)_{(n)} = \frac{\Gamma(x+n)}{\Gamma(x)} = x(x+1)\dotsb (x+n-1) = \sum_{k=0}^n {n \brack k} x^k. 
\eal
The number ${n \brack k}$ can also be defined as the number of permutations on $n$ elements with $k$ cycles. 

Our first main result is the central limit theorem for $H_m(\alpha, \beta)$. 

\begin{theorem} \label{CLTHomozygosityHDP}
Under the limiting procedure
\bal
\alpha \to \infty, \quad \beta \to \infty, \quad \frac{\alpha}{\beta} \to c \in (0,\infty),
\eal 
the homozygosity of the HDP satisfies
\bal
\tilde{H}_m(\alpha,\beta) = \sqrt{\beta} \left( \frac{H_m(\alpha, \beta) - \frac{1}{\beta^{m-1}} \sum_{j=1}^m {m \brack j} \frac{\Gamma(j)}{( \alpha/\beta )^{j-1}}}{\frac{1}{\beta^{m-1}}\sum_{j=1}^m {m \brack j} \frac{\Gamma(j)}{c^{j-1}}} \right) \xrightarrow{D} N(0, \sigma_c^2), 
\eal
where the variance is given by
\bal
\sigma_c^2 &= \frac{\sum_{j=1}^{2m} {2m \brack j} \frac{\Gamma(j)}{c^{j-1}} - \sum_{1 \leq i,j \leq m}  {m \brack i}{m \brack j} \frac{\Gamma(i+1)\Gamma(j+1)}{c^{i+j-1}}}{\left( \sum_{j=1}^m {m \brack j} \frac{\Gamma(j)}{c^{j-1}} \right)^2} - m^2.
\eal

\end{theorem}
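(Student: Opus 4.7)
The plan is to exploit the distributional identity \rf{HDP} together with the stick-breaking representation of the finite Dirichlet distribution. The beta variables $W_n \sim \Beta(\beta V_n, \beta(1 - \sum_{k=1}^n V_k))$ are precisely those needed to produce, conditional on $\bm{V}$, a Dirichlet-distributed sequence with parameters $(\beta V_1, \beta V_2, \ldots)$ whose total mass equals $\beta$. Every moment of $H_m(\alpha,\beta)$ can therefore be computed in two stages: first averaging over the Dirichlet law given $\bm{V}$, then averaging over the Poisson-Dirichlet law of $\bm{V}$.

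First I would compute the leading asymptotics of the mean and variance. Using $\E[Z_i^m \mid \bm{V}] = (\beta V_i)_{(m)}/(\beta)_{(m)}$ together with the Stirling expansion $(x)_{(m)} = \sum_{j=1}^{m} {m \brack j} x^j$ yields
\begin{align*}
\E[H_m(\alpha,\beta) \mid \bm{V}] = \frac{1}{(\beta)_{(m)}} \sum_{j=1}^{m} {m \brack j} \beta^j H_j(\alpha),
\end{align*}
and upon taking expectation with the classical identity $\E[H_j(\alpha)] = \Gamma(j)/(\alpha+1)_{(j-1)}$, the centering quantity in the theorem emerges after passing to $\alpha/\beta \to c$. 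For the variance, the total variance decomposition
\begin{align*}
\Var(H_m(\alpha,\beta)) = \E[\Var(H_m \mid \bm{V})] + \Var(\E[H_m \mid \bm{V}])
\end{align*}
cleanly separates the two sources of randomness. The inner piece is read off from the Dirichlet second moments $\E[Z_i^{2m} \mid \bm{V}] = (\beta V_i)_{(2m)}/(\beta)_{(2m)}$ and $\E[Z_i^m Z_j^m \mid \bm{V}] = (\beta V_i)_{(m)}(\beta V_j)_{(m)}/(\beta)_{(2m)}$; Stirling-expanding and averaging reduces it to a linear functional of the $\E[H_j(\alpha)]$'s. The outer piece is a quadratic form in the covariances $\cov(H_i(\alpha), H_j(\alpha))$, whose asymptotics under $\alpha \to \infty$ are controlled by the CLT of Joyce, Krone, and Kurtz \cite{JKK02}. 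Tracking leading orders in $\beta^{-1}$ under $\alpha = c\beta + o(\beta)$ should then reproduce exactly the numerator of $\sigma_c^2$, with the two Stirling sums and the $-m^2$ correction corresponding to these contributions.

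To upgrade the moment asymptotics to a distributional limit I would use the method of moments, computing all centered moments of $\sqrt{\beta}\,\tilde{H}_m(\alpha,\beta)$ and showing convergence to the moments of $N(0, \sigma_c^2)$. By the tower property each such moment decomposes into an expectation under $\bm{V}$ of a polynomial in the conditional Dirichlet moments, which in turn expand as linear combinations of $H_j(\alpha)$'s via Stirling numbers; the dominant contributions can then be identified using the joint Gaussian limit for $(H_j(\alpha))_{1 \leq j \leq 2mp}$ from \cite{JKK02}. The main obstacle will be the combinatorial bookkeeping: one must track precisely which Stirling-indexed terms survive at the dominant order $\beta^{-(m-1/2)}$, verify the vanishing of odd centered moments, and match the even ones with the Gaussian moments, keeping distinct accounting of the fluctuations arising within the conditional Dirichlet layer versus those within the base Poisson-Dirichlet layer. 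The natural splitting of $\sigma_c^2$ into these two components is what reflects the hierarchical structure in the limiting variance.
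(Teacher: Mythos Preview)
Your approach is genuinely different from the paper's and is, in outline, workable, but it is substantially more laborious and your sketch glosses over two real technical issues.

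The paper does \emph{not} use the method of moments. Instead it introduces the gamma representation $Z_k \stackrel{D}{=} \gamma_k(\alpha,\beta)/\gamma(\beta)$ and splits $\tilde H_m(\alpha,\beta)$ into three pieces: a level~1 term $Y_m$ (the centered conditional mean, which is exactly the linear combination of $\tilde H_j(\alpha)$ you wrote and is handled directly by Joyce--Krone--Kurtz), a level~2 term $X_m$ (sum of $\gamma_k^m - \E[\gamma_k^m\mid\bm V]$, handled by a conditional Lindeberg-type argument via characteristic functions after truncation at $\lfloor\alpha^2\rfloor$), and a correction term coming from the denominator $\gamma(\beta)^{-m}$. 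The $-m^2$ is \emph{not} a byproduct of the inner/outer variance split you describe; it arises because the denominator fluctuation $Z(\alpha,\beta)=\sqrt\beta(\gamma(\beta)/\beta-1)$ is correlated with $X_m$ (covariance $m$) but independent of $Y_m$, giving $\sigma_1^2+\sigma_2^2+m^2-2m\cdot m$. Your law-of-total-variance decomposition is of course consistent with this once everything is expanded, but the mechanism is different and you should not expect the $-m^2$ to fall out of the $\E[\Var]$ and $\Var[\E]$ pieces separately in the way you suggest.

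The more serious gap is in the higher moments. You write that the $p$th centered moment reduces to a polynomial in the $H_j(\alpha)$ whose expectation can be read off from the joint Gaussian limit of \cite{JKK02}. Two problems: (i) Joyce--Krone--Kurtz give convergence in distribution, not convergence of joint moments; you would need to supply uniform integrability or compute the Poisson--Dirichlet joint moments directly (feasible, since they are explicit rational functions of $\alpha$, but not automatic). (ii) The conditional Dirichlet centered moments $\E[(H_m-\E[H_m\mid\bm V])^k\mid\bm V]$ are themselves random functions of $\bm V$---symmetric polynomials in the $V_i$ that are \emph{not} simply linear in the $H_j(\alpha)$---so the cross terms in the binomial expansion of $(H_m-\mu)^p$ involve products of such polynomials with powers of $\E[H_m\mid\bm V]-\mu$. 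Tracking which of these products survive at order $\beta^{-p/2}$ and pairing them into Wick-type contractions is the real work, and your sketch does not indicate how you would organize it. You also make no mention of truncating the infinite sum $\sum_i Z_i^m$, which the paper handles explicitly and which any moment computation would also need.
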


\noindent{\bf Remarks:}
\begin{enumerate}[label=(\alph*)]
\item  The level one and level two variances are given by 
 \bal
\sigma_1^2 &= \frac{\sum_{1 \leq i,j \leq m}  {m \brack i}{m \brack j} \frac{\Gamma(i+j) - \Gamma(i+1)\Gamma(j+1)}{c^{i+j-1}}}{\left( \sum_{j=1}^m {m \brack j} \frac{\Gamma(j)}{c^{j-1}} \right)^2}, \\
\sigma_2^2 &= \frac{ \sum_{j=1}^{2m} {2m \brack j} \frac{\Gamma(j)}{c^{j-1}} - \sum_{1 \leq i,j \leq m} {m \brack i}{m \brack j} \frac{\Gamma(i+j)}{c^{i+j-1}}}{\left( \sum_{j=1}^m {m \brack j} \frac{\Gamma(j)}{c^{j-1}} \right)^2}.
\eal
The variance $\sigma_c^2$ can be written as
\bal
\sigma_c^2 &=\sigma_1^2+\sigma_2^2-m^2,
\eal
which is a mixture of individual level variations and the cross level interactions. 
\item In the case where $c$ tends to infinity, the level one concentration parameter goes to infinity faster than the level two concentration parameter, and the Gaussian fluctuation is solely determined by the level two Dirichlet process. Using the fact that ${m \brack 1} = \Gamma(m)$, we have that
\bal
\lim_{c \to \infty} \sigma_c^2 = \frac{{2m \brack 1} \Gamma(1)}{{m \brack 1}^2 \Gamma(1)^2} - m^2 = \frac{\Gamma(2m)}{\Gamma(m)^2} - m^2,
\eal
which recovers the one-level homozygosity CLT from \cite{JKK02}. 
\item If $c$ tends to zero, then $\beta$ is much bigger than $\alpha$ and the level two will fluctuate less in comparison with level one. To get the CLT, the correct scaling factor should be $\sqrt{\alpha}$, and the Gaussian behavior is solely determined by the level one Dirichlet process.
\item For any finite value of $c$, the situation is more interesting. Consider the case $m = 2$. By direct calculation, we have  $\sigma_c^2 = 2 - \frac{2(c^2 - c - 1)}{c(c+1)^2}$. For $0 < c < \frac{1+\sqrt{5}}{2}$, $\sigma^2_c$ is larger than the level one variance, which equals $2$. For $c = \frac{1+\sqrt{5}}{2}$, the two variances are the same. However for $c > \frac{1+\sqrt{5}}{2}$, the variance for the HDP limit is smaller than the level one variance.  
\end{enumerate}

Our second main result is the central limit theorem for $H_{m,n}(\alpha, \beta)$. 

\begin{theorem} \label{CLTHomozygosityFDHDP}
Under the limiting procedure
\bal
\alpha \to \infty, \quad \beta \to \infty, \quad n \to \infty, \quad \frac{\alpha}{\beta} \to c \in (0,\infty), \quad \frac{\alpha}{n} \to d \in (0,\infty), 
\eal 
the homozygosity of the FDHDP satisfies
\bal
\tilde{H}_{m,n}(\alpha,\beta)= \sqrt{\beta} \left( \frac{H_{m,n}(\alpha, \beta) - \frac{1}{\beta^{m-1}} \sum_{j=1}^m {m \brack j} \frac{\beta^{j-1} \left( \frac{\alpha}{n} + 1 \right)_{(j-1)}}{(\alpha+1)_{(j-1)}}}{\frac{1}{\beta^{m-1}}\sum_{j=1}^m {m \brack j} \frac{(d+1)_{(j-1)}}{c^{j-1}}} \right) \xrightarrow{D} N(0, v(c,d)^2), 
\eal
where the variance is given by
\bal
v(c,d)^2 &= \frac{\sum_{j=1}^{2m} {2m \brack j} \frac{(d+1)_{(j-1)}}{c^{j-1}} - \sum_{1 \leq i,j \leq m} {m \brack i}{m \brack j} \frac{(d+1)_{(i-1)}(d+1)_{(j-1)}(ij+d)}{c^{i+j-1}}}{\left( \sum_{j=1}^m {m \brack j} \frac{(d+1)_{(j-1)}}{c^{j-1}} \right)^2} - m^2.
\eal
\end{theorem}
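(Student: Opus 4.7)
The plan is to mirror the proof of Theorem \ref{CLTHomozygosityHDP} (the one-group HDP case), with the GEM weight vector $\bm{V}$ replaced by the finite-dimensional Dirichlet vector $\bm{W}_n \sim \Dir(\alpha/n, \ldots, \alpha/n)$, and to track how the regime $\alpha/n \to d$ substitutes $(d+1)_{(j-1)}$ for the $\Gamma(j)$ coefficients appearing in the variance of Theorem \ref{CLTHomozygosityHDP}. The starting point is the standard two-scale decomposition
\[
H_{m,n}(\alpha,\beta) = \bigl(H_{m,n}(\alpha,\beta) - \E[H_{m,n} \mid \bm{W}_n]\bigr) + \E[H_{m,n} \mid \bm{W}_n],
\]
in which the first summand carries the level two fluctuations of $\bm{Z}_n \mid \bm{W}_n \sim \Dir(\beta W_{n1}, \ldots, \beta W_{nn})$ and the second carries the level one fluctuations of $\bm{W}_n$.

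Using the Stirling expansion $(\beta W)_{(m)} = \sum_{j=1}^m {m \brack j} (\beta W)^j$ together with the Dirichlet moment formula, both $\E[H_{m,n} \mid \bm{W}_n]$ and $\E[H_{m,n}^2 \mid \bm{W}_n]$ reduce to polynomial combinations of the power sums $\varphi_j(\bm{W}_n) = \sum_i W_{ni}^j$ for $1 \le j \le 2m$. For the level two contribution, I would condition on $\bm{W}_n$, write $Z_{ni} = \eta_i / \sum_k \eta_k$ with conditionally independent $\eta_i \sim \Gam(\beta W_{ni}, 1)$, and establish a conditional Gaussian limit for $\sqrt{\beta}(H_{m,n} - \E[H_{m,n} \mid \bm{W}_n])$ by applying the classical multivariate CLT to the joint fluctuations of $\bigl(\sum_i \eta_i^m, \sum_k \eta_k\bigr)$ and then the delta method, working on a typical set where $\bm{W}_n$ has controlled moments. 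For the level one contribution, a joint CLT for $(\varphi_1(\bm{W}_n), \ldots, \varphi_m(\bm{W}_n))$ in the regime $\alpha, n \to \infty$ with $\alpha/n \to d$ is obtained from the representation $W_{ni} = \gamma_i / \sum_k \gamma_k$ with $\gamma_i \sim \Gam(\alpha/n, 1)$ i.i.d., by invoking the multivariate CLT for $(\sum_i \gamma_i^j)_{j=1}^m$ and the delta method.

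The $\sqrt{\beta}$ scaling is calibrated so that, under $\alpha/\beta \to c$, both pieces contribute at the same order. Since the two noise sources are independent given $\bm{W}_n$, the limiting variances add, yielding a decomposition $v(c,d)^2 = v_1^2 + v_2^2 - m^2$ parallel to the $\sigma_c^2 = \sigma_1^2 + \sigma_2^2 - m^2$ split in Remark (a) of Theorem \ref{CLTHomozygosityHDP}. The specific factor $(d+1)_{(i-1)}(d+1)_{(j-1)}(ij+d)$ in the numerator of $v(c,d)^2$ comes from evaluating $\E[\varphi_i(\bm{W}_n)\,\varphi_j(\bm{W}_n)]$ asymptotically: the diagonal part $\sum_p \E[W_{np}^{i+j}]$ produces the $ij+d$ correction, while the off-diagonal part $\sum_{p \neq q} \E[W_{np}^i W_{nq}^j]$ produces the $(d+1)_{(i-1)}(d+1)_{(j-1)}$ prefactor, and setting $d = 0$ recovers the $\Gamma(i+1)\Gamma(j+1)$ of the HDP cross term. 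The main obstacle is the bookkeeping in the variance algebra: tracking the cancellation between the $(\beta)_{(m)}$, $(\alpha+1)_{(j-1)}$, and $(\alpha/n+1)_{(j-1)}$ denominators so that the normalization is of the common order $\beta^{-(2m-1)}$, and verifying that the conditional level two CLT holds uniformly enough in $\bm{W}_n$ on the typical set for the unconditional CLT to follow by dominated convergence of characteristic functions.
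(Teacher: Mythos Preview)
Your proposal is correct and follows essentially the same route as the paper. Both arguments use the gamma representation $Z_{ni}=\eta_i/\sum_k\eta_k$ with $\eta_i\sim\Gam(\beta W_{ni},1)$, a conditional (level~2) CLT given $\bm{W}_n$ via characteristic functions, and a level~1 CLT for $(\varphi_j(\bm{W}_n))_{j}$ obtained from the i.i.d.\ gamma representation $W_{ni}=\gamma_i/\sum_k\gamma_k$ plus the multivariate delta method; the paper's Lemmas~\ref{jointgammaclt}--\ref{Level1CLTFDHDP} are exactly your level~1 step.

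The one packaging difference is that the paper uses a \emph{three}-term decomposition $\tilde H_{m,n}=X_m+Y_m+\sqrt{\beta}\bigl((\beta/\gamma(\beta))^m-1\bigr)\cdot(\ldots)$, isolating the denominator fluctuation as a separate piece and then proving a joint CLT (Lemma~\ref{JointConvFDHDP}) for $(Z_m,X_m)$ to obtain the cross-covariance $m$ that produces the $-m^2$. You instead keep the ratio intact and apply the delta method to $(\sum_i\eta_i^m,\sum_k\eta_k)$ jointly, which absorbs the correction term into the level~2 variance in one step. Both routes yield the same $v_1^2+v_2^2-m^2$ split; yours saves a lemma, while the paper's makes the source of the $-m^2$ more transparent. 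One caution: your level~2 array is triangular (the number of summands $n\to\infty$), so ``classical multivariate CLT'' should be read as a Lindeberg/characteristic-function argument, which is how the paper proceeds.
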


\noindent{\bf Remarks:} 
\begin{enumerate}[label=(\alph*)] 
\item The constant $c$ is the asymptotic concentration ratio between the level one and level two Dirichlet processes, while $d$ characterizes the impact of finite dimensional approximation. 
\item Observe that $\lim_{d \to 0} v^2(c,d) = \sigma_c^2$. This means that the difference between the finite dimensional and infinite-dimensional models diminishes as $d$ tends to zero. Thus the larger the value of $d$, the stronger the finite-dimensional impact.
\end{enumerate}

In Propositions \ref{LLN}, \ref{LLNFDHDP}, and \ref{LLNDgroups}, we also obtain the law of large numbers for $H_m(\alpha, \beta)$, $H_{m,n}(\alpha, \beta)$, and $H_m^L(\alpha, \beta)$, respectively. To complete the classical trio of limit theorems, we include the following large deviation principle for $H_m(\alpha,\beta)$. 
  
\begin{theorem}\label{LDPHomozygosity}
Assume that 
\bal
\alpha \to \infty, \quad \beta \to \infty, \quad \frac{\alpha}{\beta} \to c \in (0,\infty).
\eal 
Then the  family $\{H_m(\alpha, \beta): \alpha>0, \beta>0\}$ for $m \geq 2$ satisfies a large deviation principle on $[0,1]$ with speed $\gamma = \max\{\alpha, \beta\}$ and a good rate function
\bal
J_m(q)=\inf\left\{I(\bm{x}): \bm{x} \in \Delta, \varphi_m(\bm{x})=q \right\},
\eal
where $I(\cdot)$ is given in Equation {\rm (3.1)} in \cite{Fen23}.
\end{theorem}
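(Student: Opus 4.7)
The plan is to derive the large deviation principle for $H_m(\alpha,\beta)=\varphi_m(\bm{Z})$ from a large deviation principle for the HDP weight vector $\bm{Z}$ via the contraction principle. The key input is the LDP for $\bm{Z}$ already established in \cite{Fen23}: under the regime $\alpha,\beta\to\infty$ with $\alpha/\beta\to c\in(0,\infty)$, the family $\{\bm{Z}_{\alpha,\beta}\}$ satisfies an LDP on $\Delta$ with speed $\gamma=\max\{\alpha,\beta\}$ and good rate function $I$, namely the one given by Equation (3.1) of \cite{Fen23}. This LDP already absorbs the entire hierarchical structure of the HDP, so what remains is purely a soft transport argument.

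The second step is to verify that the power sum map $\varphi_m:\Delta\to[0,1]$ is continuous in the topology used in \cite{Fen23}. Using the elementary inequality $|x^m-y^m|\leq m|x-y|$ valid for $x,y\in[0,1]$ and $m\geq 2$, one obtains
\bal
|\varphi_m(\bm{x})-\varphi_m(\bm{y})| \leq m\sum_{i=1}^\infty |x_i-y_i|,
\eal
so $\varphi_m$ is Lipschitz in the $\ell^1$ metric on $\Delta$, and in particular continuous in any topology at least as strong as $\ell^1$.

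The third and final step is to apply the contraction principle: since $\bm{Z}_{\alpha,\beta}$ satisfies the LDP with good rate function $I$ and $\varphi_m$ is continuous, the image family $\{\varphi_m(\bm{Z}_{\alpha,\beta})\}=\{H_m(\alpha,\beta)\}$ satisfies an LDP on $[0,1]$ with the same speed $\gamma=\max\{\alpha,\beta\}$ and good rate function
\bal
J_m(q)=\inf\{I(\bm{x}):\bm{x}\in\Delta,\ \varphi_m(\bm{x})=q\},
\eal
which is exactly the statement of the theorem. Goodness of $J_m$ is inherited from the goodness of $I$, since the continuous image of a compact level set is compact in $[0,1]$.

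The main potential obstacle lies in the topological compatibility between the LDP in \cite{Fen23} and the continuity of $\varphi_m$. If the LDP there is formulated in a topology weaker than $\ell^1$ (say the product topology), then $\varphi_m$ need not be continuous on all of $\Delta$ and one must supplement the argument with an exponential tightness bound for the tail $\sum_{i>N}Z_i$ on the scale $\gamma$, localizing to a compact $\ell^1$-closed subset of $\Delta$ on which $\varphi_m$ is continuous, and then invoke the inverse contraction principle. Checking which topology is used in \cite{Fen23}, and patching the argument with such tail bounds if necessary, is the only place where genuine work is required; the rest is purely formal.
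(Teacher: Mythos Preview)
Your proposal is correct and takes essentially the same approach as the paper: the paper's entire proof is the single sentence ``The proof is a direct application of the contraction principle and Theorem 3.2 in \cite{Fen23}, which gives the large deviation principle for the distribution of $\bm{Z}_{\alpha,\beta}$.'' Your write-up is in fact more careful than the paper's, since you flag the topological compatibility issue (continuity of $\varphi_m$ in the topology used for the LDP on $\Delta$) that the paper leaves implicit.
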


The proof is a direct application of the contraction principle and Theorem 3.2 in \cite{Fen23}, which gives the large deviation principle for the distribution of $\bm{Z}_{\alpha,\beta}$. 

Our final main result is the central limit theorem for $H^L_{m}(\alpha, \beta)$.

\begin{theorem} \label{CLTHomozygosityHDPgroups}
Under the limiting procedure 
\bal
\alpha \to \infty, \quad \beta \to \infty, \quad \frac{\alpha}{\beta} \to c \in (0,\infty),
\eal 
the homozygosity $H_m^L(\alpha,\beta)$ satisfies
\bal
\tilde{H}^L_{m}(\alpha, \beta) &= \frac{\sqrt{\beta}}{f(\beta; m, L, c)}\left( H^L_{m}(\alpha, \beta) - \frac{1}{L^m \beta^{m-1}}  \sum_{j = 1}^{m} A_j(m,L) \frac{\Gamma(j)}{(\alpha/\beta)^{j-1}} \right) \\
&\xrightarrow{D} N(0, \sigma_{c,L}^2)
\eal
where the variance is given by
\bal
\sigma_{c,L}^2 &= \frac{\sum_{j=1}^{2m} \tilde{A}_j(m,L) \frac{\Gamma(j)}{c^{j-1}} - \sum_{1 \leq i,j \leq m} A_i(m,L)A_j(m,L) \frac{\Gamma(i+1)\Gamma(j+1)}{c^{i+j-1}}}{\left( \sum_{j = 1}^{m} A_j(m,L) \frac{\Gamma(j)}{c^{j-1}}\right)^2} \\
&\qquad - \sum_{k=1}^L \left( \sum_{\bm{m} \in M_{m,L}} C_{\bm{m}} m_k\right)^2
\eal
with
\bal
a_j(\bm{m},L) &= \sum_{\bm{j} \in M_{j,L}} \prod_{k=1}^L {m_k \brack j_k}, \qquad A_j(m,L) = \sum_{\bm{m} \in M_{m,L}} a_j(\bm{m},L), \\
f(\beta;m, L, c)&=  \frac{1}{L^m \beta^{m-1}}  \sum_{j = 1}^{m} A_j(m,L) \frac{\Gamma(j)}{c^{j-1}},\\ 
\tilde{A}_j(m,L) &= \sum_{\bm{m}_1 \in M_{m,L}} \sum_{\bm{m}_2 \in M_{m,L}} \left( \sum_{\bm{j} \in M_{j,L}} \prod_{k=1}^L {m_{1k} + m_{2k} \brack j_k} \right) \\
C_{\bm{m}} &= \frac{\sum_{j=1}^m a_j(\bm{m},L) \frac{\Gamma(j)}{c^{j-1}}}{\sum_{j=1}^m A_j(m,L) \frac{\Gamma(j)}{c^{j-1}}}
\eal
for all $1 \leq j \leq m$, $\bm{m} \in M_{m,L}$, and $L \geq 1$. 
\end{theorem}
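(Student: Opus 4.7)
The plan is to adapt the moment-method approach of Theorem~\ref{CLTHomozygosityHDP} by conditioning on the level-one weights $\bm{V}$, exploiting the fact that the $L$ level-two Dirichlet processes $\bm{Z}_1, \ldots, \bm{Z}_L$ are conditionally i.i.d.\ given $\bm{V}$. Conditional on $\bm{V}$, the weight $Z_{k,i}$ has a $\Beta(\beta V_i, \beta(1-V_i))$ marginal and across-group independence gives $\E[\prod_k Z_{k,i}^{m_k} \mid \bm{V}] = \prod_k (\beta V_i)_{(m_k)}/(\beta)_{(m_k)}$. Expanding $(\beta V_i)_{(m_k)} = \sum_{j_k}{m_k \brack j_k}\beta^{j_k}V_i^{j_k}$ and summing over $i$ yields
\[
\E[H_m^L \mid \bm{V}] = \frac{1}{L^m}\sum_{\bm{m} \in M_{m,L}}\frac{1}{\prod_{k=1}^L(\beta)_{(m_k)}}\sum_{j=1}^{m}\beta^j\, a_j(\bm{m}, L)\,\varphi_j(\bm{V}),
\]
from which the displayed centering $(L^m\beta^{m-1})^{-1}\sum_j A_j(m,L)\,\Gamma(j)/(\alpha/\beta)^{j-1}$ follows by replacing $\prod_k(\beta)_{(m_k)}$ by $\beta^m$ and $\E[\varphi_j(\bm{V})]$ by $\Gamma(j)/\alpha^{j-1}$ at leading order.

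For the variance I would use the law of total variance $\Var(H_m^L) = \Var(\E[H_m^L\mid \bm{V}]) + \E[\Var(H_m^L\mid\bm{V})]$ and treat the two pieces separately. The first piece is the variance of a linear combination of $\varphi_j(\bm{V})$'s, whose limiting joint Gaussian behavior is part of the CLT package for the Dirichlet process from \cite{JKK02}. The second piece is computed from the conditional second moment
\[
\E\!\left[\prod_{k=1}^L Z_{k,i}^{m_{1k}}Z_{k,i'}^{m_{2k}} \,\Big|\, \bm{V}\right] = \prod_{k=1}^L \frac{(\beta V_i)_{(m_{1k})}(\beta V_{i'})_{(m_{2k})}}{(\beta)_{(m_{1k}+m_{2k})}} \qquad (i\neq i'),
\]
together with the diagonal $i=i'$ analogue (numerator replaced by $(\beta V_i)_{(m_{1k}+m_{2k})}$). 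Stirling expansion combined with the asymptotics $\varphi_j(\bm{V}) \sim \Gamma(j)/c^{j-1}/\alpha^{j-1}$ identifies the leading $\tilde{A}_j(m,L)$ and $A_iA_j$ terms in $\sigma_{c,L}^2$. Crucially, the expansion
\[
\frac{\prod_k (\beta)_{(m_{1k})}(\beta)_{(m_{2k})}}{\prod_k (\beta)_{(m_{1k}+m_{2k})}} = 1 - \frac{1}{\beta}\sum_{k=1}^L m_{1k}m_{2k} + O(\beta^{-2})
\]
generates an additional contribution of order $\beta^{-(2m-1)}$ which, after summation over multi-indices $(\bm{m}_1,\bm{m}_2)$ weighted by the normalized mean contributions $C_{\bm{m}}$, reorganizes into the subtraction $-\sum_k (\sum_{\bm{m}} C_{\bm{m}} m_k)^2$ appearing in $\sigma_{c,L}^2$. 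This is the $L$-group analogue of the $-m^2$ correction in $\sigma_c^2$ and arises from the discrepancy between joint and product rising factorials.

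To upgrade variance convergence to the full central limit theorem, I would use a conditioning argument: the conditional distribution of $\Delta_2 = H_m^L - \E[H_m^L\mid\bm{V}]$ given $\bm{V}$ is Gaussian in the limit (by an $L$-fold conditional extension of the argument underlying Theorem~\ref{CLTHomozygosityHDP}), and the level-one law of large numbers $\alpha^{j-1}\varphi_j(\bm{V})\to\Gamma(j)$ in probability makes its limiting conditional variance deterministic. Combined with the unconditional CLT for $\Delta_1 = \E[H_m^L\mid\bm{V}] - \E[H_m^L]$ and the asymptotic independence of $\Delta_1$ (which is $\sigma(\bm{V})$-measurable) and $\Delta_2$ (which has zero conditional mean given $\bm{V}$), a characteristic function computation yields the joint Gaussian limit. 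The principal technical obstacle is the multi-index bookkeeping: verifying that the sum $\sum_{(\bm{m}_1, \bm{m}_2) \in M_{m,L}^2}\prod_k {m_{1k}+m_{2k} \brack j_k}$ (from the diagonal contribution) collects into $\tilde{A}_j(m,L)$, and checking that the $L=1$ reduction (where $M_{m,1}=\{m\}$, $\tilde{A}_j = {2m \brack j}$, and $C_{\bm{m}}=1$ so that the correction collapses to $m^2$) recovers $\sigma_c^2$, which provides a useful internal consistency check.
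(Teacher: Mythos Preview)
Your approach is correct and reaches the right variance, but it follows a genuinely different route from the paper. The paper does not work with the Dirichlet moments of $\bm Z_k$ directly; instead it uses the gamma subordinator representation $Z_{k,i}=\gamma_{ki}(\alpha,\beta)/\gamma_k(\beta)$ and splits $\tilde H_m^L(\alpha,\beta)$ into \emph{three} pieces: a level-one piece $Y_{m,L}$ (a linear combination of $\tilde H_j(\alpha)$, handled by \cite{JKK02}), a level-two numerator piece $X_{m,L}$ built from the $\gamma_{ki}$'s (handled by a conditional characteristic-function CLT), and a denominator correction $\sum_{\bm m}\sqrt\beta\bigl(\prod_k(\beta/\gamma_k(\beta))^{m_k}-1\bigr)\cdot(\cdots)$. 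The subtraction $-\sum_k\bigl(\sum_{\bm m}C_{\bm m}m_k\bigr)^2$ then arises from the variance of this last piece together with its covariance with $X_{m,L}$, obtained via a joint CLT (Lemma~\ref{JointCLTDgroups}) and the multivariate delta method applied to $(\gamma_1(\beta)/\beta,\dots,\gamma_L(\beta)/\beta)$.

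Your two-term decomposition $\Delta_1+\Delta_2$ collapses the paper's $X_{m,L}$ and its denominator correction into the single conditional-fluctuation term $\Delta_2$, and you recover the same subtraction from the $-\beta^{-1}\sum_k m_{1k}m_{2k}$ coefficient in the rising-factorial ratio. This is neat and explains algebraically where the correction comes from without ever introducing $\gamma_k(\beta)$. The trade-off is that when you come to prove the conditional CLT for $\Delta_2$, the $Z_{k,i}$ are \emph{not} conditionally independent across $i$ given $\bm V$ (the Dirichlet constraint couples them), so the ``$L$-fold conditional extension of the argument underlying Theorem~\ref{CLTHomozygosityHDP}'' you invoke is exactly the gamma representation the paper uses throughout; at that point the paper's three-term split is the natural thing to do. In short: your variance bookkeeping is a legitimate alternative that makes the Dirichlet origin of the correction transparent, while the paper's gamma-based decomposition is more modular and supplies the independence structure needed for the CLT in one stroke.
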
 

\noindent {\bf Remark:}  Observe that when $L = 1$, we have that $\tilde{A}_j(m,1) = {2m \brack j}$, $A_j(m,1) = {m \brack j}$, $C_{\bm{m}} = 1$, and $m_k= m$. Thus we recover Theorem \ref{CLTHomozygosityHDP}, the central limit theorem for the single group case.

\subsection{Outline}

The outline of the paper is as follows. In Section \ref{Preliminaries}, we introduce some necessary notation and auxiliary results. In Section \ref{CLTHomozygHDP}, we prove the central limit theorem for $H_m(\alpha,\beta)$. The central limit theorem for $H_{m,n}(\alpha,\beta)$ is proved in Section \ref{CLTHomozygFDHDP} and the central limit theorem for $H^L_{m}(\alpha, \beta)$ is proved in Section \ref{CLTHomozygHDPgroups}. We conclude with some additional comments and final remarks in Section \ref{FinalRemarks}. 

\section{Preliminaries} \label{Preliminaries}

\subsection{Notation}

Let $a_n, b_n$ be two sequences. If $\lim_{n \to \infty} \frac{a_n}{b_n}$ is a nonzero constant, then we write $a_n \asymp b_n$ and we say that $a_n$ is of the same {\em order} as $b_n$. 
If the limit is $1$, we write $a_n\sim b_n$ and we say that $a_n$ is {\em asymptotic} to $b_n$.  If there exists positive constants $c$ and $n_0$ such that $a_n \leq cb_n$ for all $n \geq n_0$, then we write $a_n = O(b_n)$ and say that $a_n$ is {\em big-O} of $b_n$.

We let {\em iid} abbreviate independent and identically distributed. Convergence in distribution, in probability, and almost surely are denoted by $\xrightarrow{D}$, $\stackrel{P}{\longrightarrow}$, and $\stackrel{a.s.}{\longrightarrow}$, respectively. Similarly equality in distribution, in probability, and almost surely are denoted by $\stackrel{D}{=}$, $\stackrel{P}{=}$, and $\stackrel{a.s.}{=}$, respectively. If $X$ is a random variable distributed as $\nu$, then we write $X \sim \nu$. 

\subsection{Auxiliary Results}

Let $\Db = D([0,1])$ be the space of real-valued functions on $[0,1]$ that are right continuous and have left-hand limits. Let $B_t$ denote standard Brownian motion. Let $\nu$ be a diffuse probability measure on $[0,1]$ and set $h(t) = \nu([0,t])$. Let $\{\gamma(t) : t \geq 0\}$ be a gamma process with L\'{e}vy measure $\Lambda(dx) =  x^{-1}e^{-x}dx$. The following result will be useful. 

\begin{lemma}[Theorem 7.7, \cite{Fen10}] \label{Fen7.7}
Set
\bal
X_\theta(t) = \sqrt{\theta}\left( \frac{\gamma(\theta h(t))}{\theta} - h(t) \right), \quad X(t) = B(h(t)). 
\eal
Then $X_\theta(\cdot)$ converges in distribution to $X(\cdot)$ in $\Db$ as $\theta$ tends to infinity. 
\end{lemma}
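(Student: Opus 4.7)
The plan is the standard functional-CLT reduction via a continuous mapping argument. Write $Y_\theta(s) := \sqrt{\theta}\bigl(\gamma(\theta s)/\theta - s\bigr)$ for $s \in [0,1]$, so that $X_\theta = Y_\theta \circ h$ with $h(t) = \nu([0,t])$. Because $\nu$ is a diffuse probability measure on $[0,1]$, the function $h$ is continuous and nondecreasing with $h(0) = 0$ and $h(1) = 1$, and the composition map $g \mapsto g \circ h$ from $\Db$ to $\Db$ is continuous at every $g \in C[0,1]$. Thus, once we show $Y_\theta \xrightarrow{D} B$ in $\Db$, the continuous mapping theorem yields $X_\theta = Y_\theta \circ h \xrightarrow{D} B \circ h = X$.

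The core step is to establish the invariance principle $Y_\theta \xrightarrow{D} B$ in $\Db$. Observe that $M(s) := \gamma(s) - s$ is a centered L\'evy martingale with $\mathrm{Var}(M(s)) = s$, so $Y_\theta(s) = M(\theta s)/\sqrt{\theta}$ is a time-rescaled L\'evy process. I would verify the two standard conditions of the functional CLT. For finite-dimensional convergence, the independence of increments of $\gamma$ reduces the problem to the one-dimensional CLT for $\Gam(\theta \Delta s, 1)$ variables, which after centering by $\theta \Delta s$ and scaling by $\sqrt{\theta}$ converge to $N(0, \Delta s)$; this is immediate from the explicit gamma characteristic function $(1 - iu/\sqrt{\theta})^{-\theta \Delta s}$ expanded to second order. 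For tightness in $\Db$, I would use the fourth-moment criterion: a direct calculation of the centered fourth moment of the gamma distribution gives $\E[M(\theta(s-r))^4] = 3\theta^2(s-r)^2 + O(\theta(s-r))$, hence $\E[(Y_\theta(s) - Y_\theta(r))^4] \leq C(s-r)^2$ uniformly for $s - r \geq 1/\theta$, which suffices for tightness of c\`adl\`ag processes with stationary independent increments.

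The main obstacle is tightness, since finite-dimensional convergence is essentially a calculus exercise with gamma characteristic functions. The subtle point is that $\gamma$ has unbounded jumps, so uniform control over oscillations requires one to isolate the large-jump contribution. This is cleanest to handle via the L\'evy-It\^o decomposition: jumps of size $\geq \varepsilon \sqrt{\theta}$ contribute negligibly to $M(\theta s)/\sqrt{\theta}$ uniformly in $s$ (since their intensity on $[0,\theta]$ is $O(1)$ and a first-moment bound controls their total contribution), while the remaining compensated small-jump martingale has predictable quadratic variation converging to the identity and hence satisfies the hypotheses of Rebolledo's martingale functional CLT. Since the lemma is quoted from Theorem 7.7 of \cite{Fen10}, I would ultimately cite that source, but the argument above is the route I would follow from scratch.
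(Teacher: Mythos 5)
The paper does not prove this lemma at all: it is imported verbatim as Theorem 7.7 of \cite{Fen10}, so there is no in-paper argument to compare against, and the relevant question is only whether your standalone derivation is sound. It is. The reduction $X_\theta = Y_\theta\circ h$ with $Y_\theta(s)=\sqrt{\theta}\bigl(\gamma(\theta s)/\theta-s\bigr)$ is correct, and composition with the fixed continuous nondecreasing $h$ (continuity of $h$ uses that $\nu$ is diffuse) is continuous on $D([0,1])$ at every continuous path, so the continuous mapping theorem applies because the limit $B$ is a.s.\ continuous. Finite-dimensional convergence via the gamma characteristic function is routine. Two small improvements on the tightness step: since $Y_\theta$ has independent increments with $\E\bigl[(Y_\theta(t)-Y_\theta(s))^2\bigr]=t-s$ exactly, the cleaner criterion is the product bound $\E\bigl[(Y_\theta(t)-Y_\theta(s))^2(Y_\theta(s)-Y_\theta(r))^2\bigr]=(t-s)(s-r)\leq(t-r)^2$ for $r\leq s\leq t$, which is Billingsley's two-increment condition and holds for \emph{all} triples, avoiding the $s-r\geq 1/\theta$ caveat in your fourth-moment bound (indeed the centered fourth moment of $\Gam(a,1)$ is $3a^2+6a$, so the term $6(s-r)/\theta$ genuinely spoils the bound for very short increments). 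When using a criterion restricted to $t-r\geq 1/\theta$ one must separately control the maximal jump; this is where your large-jump remark belongs, and it does close: the largest jump of $\gamma$ on $[0,\theta]$ is $O(\log\theta)$ in probability, so the largest jump of $Y_\theta$ is $O(\log\theta/\sqrt{\theta})\to 0$, which also confirms that the limit is supported on continuous paths. The Rebolledo route you sketch is a valid alternative but is heavier machinery than needed here. Note finally that the paper only ever invokes this lemma through the marginal $\sqrt{\beta}(\gamma(\beta)/\beta-1)\xrightarrow{D}N(0,1)$, for which the one-dimensional CLT already suffices.
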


We will also need the multivariate central limit theorem for the homozygosity associated with the Dirichlet process. 

\begin{proposition}[Theorem 1, \cite{JKK02}] \label{JKKCLT}
Suppose $\bm{X} = (X_1, X_2, \ldots) \sim \PD(\alpha)$ and let 
\bal
\tilde{H}_m(\alpha) = \sqrt{\alpha} \left( \frac{\alpha^{m-1}}{\Gamma(m)} \sum_{i=1}^\infty X_i^m - 1 \right), \qquad m \geq 2,
\eal
be the scaled $m$th order homozygosity. Then
\bal
(\tilde{H}_2(\alpha), \tilde{H}_3(\alpha), \ldots) \xrightarrow{D} (H_2, H_3, \ldots)
\eal
as $\alpha \to \infty$, where $\bm{H} = (H_2, H_3, \ldots)$ is a $\R^\infty$-valued random vector. The joint distribution of any finite number of components of $\bm{H}$ has a multivariate normal distribution with mean $\bm{0}$ and 
\bal
\cov(H_i,H_j) = \frac{\Gamma(i+j) - \Gamma(i+1)\Gamma(j+1)}{\Gamma(i)\Gamma(j)}, \qquad i,j = 2, 3, \ldots
\eal 
\end{proposition}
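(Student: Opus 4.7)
The plan is to realize the Poisson--Dirichlet vector through the normalized jumps of a gamma subordinator and then lift a multivariate CLT from the level of the unnormalized power sums down to the normalized ones. Let $\{\gamma(t): t \geq 0\}$ be the gamma subordinator with L\'evy measure $\Lambda(dx) = x^{-1} e^{-x}\,dx$, and let $J_1 \geq J_2 \geq \cdots$ denote the ranked jumps of $\gamma$ on $[0, \alpha]$. Classically, $\gamma(\alpha) \sim \Gam(\alpha,1)$ is independent of $(J_i/\gamma(\alpha))_{i \geq 1}$, and the latter vector has distribution $\PD(\alpha)$. Setting $Y_m(\alpha) = \sum_{i \geq 1} J_i^m$, this yields the identity
\bal
\sum_{i=1}^\infty X_i^m = \frac{Y_m(\alpha)}{Y_1(\alpha)^m}, \qquad m \geq 1,
\eal
with $Y_1(\alpha) = \gamma(\alpha)$, so the homozygosity problem is reduced to the joint asymptotics of $(Y_m(\alpha))_{m \geq 1}$.

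The first main step is a multivariate CLT for $(Y_m(\alpha))_{m \geq 1}$. A direct calculation against the Poisson random measure with intensity $\alpha x^{-1} e^{-x}\,dx$ gives $\E[Y_m(\alpha)] = \alpha \Gamma(m)$ and, for $m,n \geq 1$, $\cov(Y_m(\alpha), Y_n(\alpha)) = \alpha \int_0^\infty x^{m+n-1} e^{-x}\, dx = \alpha \Gamma(m+n)$. Since $Y_m(\alpha)$ is the value at time $\alpha$ of a L\'evy process, a standard CLT for L\'evy processes applied componentwise and lifted via Cram\'er--Wold yields
\bal
\sqrt{\alpha} \left( Y_m(\alpha)/\alpha - \Gamma(m) \right)_{m \geq 1} \xrightarrow{D} (G_m)_{m \geq 1},
\eal
where $(G_m)$ is a centered Gaussian sequence with $\cov(G_m, G_n) = \Gamma(m+n)$; equivalently, one may integrate $x \mapsto x^m$ against the compensated Poisson measure and invoke Lemma \ref{Fen7.7} with $h(t) = t$.

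The second step is a delta-method argument. Writing $U_m = Y_m(\alpha)/\alpha \stackrel{P}{\to} \Gamma(m)$, we have
\bal
\frac{\alpha^{m-1}}{\Gamma(m)} \sum_{i=1}^\infty X_i^m = \frac{U_m}{\Gamma(m)\, U_1^m},
\eal
and a first-order Taylor expansion of $u \mapsto u^m$ at $u = 1$ combined with $\sqrt{\alpha}(U_1-1) = O_P(1)$ gives
\bal
\tilde{H}_m(\alpha) = \frac{\sqrt{\alpha}(U_m - \Gamma(m)) - m \Gamma(m) \sqrt{\alpha}(U_1 - 1)}{\Gamma(m)} + o_P(1).
\eal
Combined with the joint CLT of the previous step, this shows $(\tilde{H}_2, \tilde{H}_3, \ldots) \xrightarrow{D} (H_m)_{m \geq 2}$ with $H_m = G_m/\Gamma(m) - m G_1$, which is jointly Gaussian. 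Using $\cov(G_m, G_n) = \Gamma(m+n)$ and $\Gamma(2) = 1$, a direct bookkeeping gives
\bal
\cov(H_m, H_n) = \frac{\Gamma(m+n)}{\Gamma(m)\Gamma(n)} - mn = \frac{\Gamma(m+n) - \Gamma(m+1)\Gamma(n+1)}{\Gamma(m)\Gamma(n)},
\eal
which matches the stated covariance.

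The main obstacle is the multivariate CLT for $(Y_m(\alpha))_m$ when $m \geq 2$: although the small-jump behaviour of $\Lambda$ is tame (the $x^m$ weight suppresses the $x^{-1}$ singularity at $0$), the large-jump contribution requires a truncation-and-moment estimate to verify a Lindeberg-type condition jointly across finitely many coordinates. Once joint asymptotic normality of $(U_m)_{m \geq 1}$ is in hand, the Slutsky/delta-method reduction and the covariance algebra are mechanical, and the independence of $\gamma(\alpha)$ from the normalized jumps ensures that the random denominator causes no additional complication.
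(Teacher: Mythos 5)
Your argument is essentially correct, but note first that the paper does not prove Proposition \ref{JKKCLT} at all: it is imported verbatim as Theorem 1 of \cite{JKK02} and used as a black box, so there is no in-paper proof to compare against. What you have written is a self-contained derivation along the lines of the original Joyce--Krone--Kurtz argument: represent $\PD(\alpha)$ via the ranked jumps $J_1 \geq J_2 \geq \cdots$ of a gamma subordinator on $[0,\alpha]$, so that $\sum_i X_i^m = Y_m(\alpha)/Y_1(\alpha)^m$ with $Y_m(\alpha) = \sum_i J_i^m$, prove a joint CLT for the $Y_m$'s, and finish with Slutsky and the delta method. The moment computations ($\E[Y_m(\alpha)] = \alpha\Gamma(m)$, $\cov(Y_m(\alpha),Y_n(\alpha)) = \alpha\Gamma(m+n)$), the linearization $H_m = G_m/\Gamma(m) - m G_1$, and the covariance bookkeeping (using $\cov(G_m,G_1) = \Gamma(m+1)$ and $\Gamma(2)=1$) all check out and reproduce the stated covariance. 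Two small points. First, the ``main obstacle'' you flag is not really one: for integer $\alpha$ the vector $(Y_1(\alpha),\dots,Y_M(\alpha))$ is a sum of $\alpha$ iid copies of $(Y_1(1),\dots,Y_M(1))$, and each coordinate has finite variance because $\int_0^\infty x^{2m-1}e^{-x}\,dx = \Gamma(2m) < \infty$, so the classical finite-variance CLT plus Cram\'er--Wold applies with no Lindeberg truncation; non-integer $\alpha$ is handled by interpolation, as the paper itself does in the proof of Lemma \ref{as-LLN}. Second, your parenthetical appeal to Lemma \ref{Fen7.7} with $h(t)=t$ only covers the coordinate $m=1$ (it is the functional CLT for $\gamma(\alpha)$ itself); for $m \geq 2$ you need the CLT for the compensated Poisson integral of $x \mapsto x^m$, which is exactly the L\'evy-process CLT you invoke in the main line of the argument, so nothing is lost, but the ``equivalently'' is misleading as stated.
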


\section{Central Limit Theorem for the Homozygosity of the Single Group HDP} \label{CLTHomozygHDP}

In this section we prove Theorem \ref{CLTHomozygosityHDP}, the central limit theorem for $H_m(\alpha,\beta)$. This requires a series of lemmas and propositions, and we divide the presentation into several subsections. To make the development clear, we provide the following sketch of the proof. 

First, we decompose the scaled homozygosity, $\tilde{H}_m(\alpha, \beta)$, into a sum of three terms: the level 1 contribution, the level 2 contribution, and a correction term. This is done is Section \ref{SecDecomp}. In Section \ref{SecCLTLevel1}, we prove a central limit theorem for the level 1 contribution by rewriting it as a linear combination of the scaled one-level homozygosities, $\tilde{H}_j(\alpha)$, and then applying the Joyce-Krone-Kurtz multivariate central limit theorem. Next we use characteristic functions and a conditional central limit theorem to prove a central limit theorem for the level 2 contribution in Section \ref{SecCLTLevel2}. The last ingredient is a multivariate central limit theorem for the level 2 contribution and the correction term, which we give in Section \ref{SecJointConvergence}. Finally we combine the above and prove the central limit theorem for $\tilde{H}_m(\alpha,\beta)$ in Section \ref{SecProofofCLTHDP}. 

\subsection{Gamma Representation, Law of Large Numbers, and Decomposition} \label{SecDecomp}

Let $\gamma(t)$ be the gamma process above. Set
\bal
\gamma_k(\alpha,\beta)= \gamma\left(\beta \sum_{i=1}^k V_i \right)- \gamma\left(\beta\sum_{i=1}^{k-1}V_i \right),
\eal
for $k \in \N$. By the Gamma-Dirichlet algebra \cite{Pitman06}, we have 
\bal
Z_1\stackrel{D}{=}\frac{\gamma_1(\alpha, \beta)}{\gamma(\beta)}\sim \Beta(\beta V_1, \beta(1-V_1))
\eal
and 
\bal
Z_k \stackrel{D}{=} \frac{\sum_{i=2}^\infty \gamma_i(\alpha,\beta)}{\sum_{i=1}^\infty\gamma_i(\alpha,\beta)}\cdot \frac{\sum_{i=3}^\infty\gamma_i(\alpha,\beta)}{\sum_{i=2}^\infty\gamma_i(\alpha,\beta)}\cdots  \frac{\gamma_k(\alpha,\beta)}{\sum_{i=k}^\infty\gamma_i(\alpha,\beta)} \stackrel{D}{=} \frac{\gamma_k(\alpha,\beta)}{\gamma(\beta)}. 
\eal

The following lemma gives the moments of the $Z_k$ weights. 

\begin{lemma} \label{Zmoments}
For all $k$ and all $m \geq 1$, 
\bal
\mathbb{E}(Z_k^m) = \frac{1}{(\beta)_{(m)}} \sum_{j=1}^m {m \brack j} \beta^j \frac{j!}{(\alpha+1)_{(j)}} \left( \frac{\alpha}{\alpha + j} \right)^{k-1}.
\eal
\end{lemma}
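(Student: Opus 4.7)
The plan is to condition on the stick-breaking weight $V_k$ and exploit the fact that, by the Gamma-Dirichlet algebra recalled just before the lemma, the conditional distribution of $Z_k$ given $V_k$ is Beta. Once the conditional moments are computed, the stick-breaking independence reduces the remaining expectation to a product of one-dimensional Beta moments.

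Step 1: Conditional Beta representation. From the displays preceding the lemma, $Z_1 \stackrel{D}{=} \gamma_1(\alpha,\beta)/\gamma(\beta) \sim \Beta(\beta V_1, \beta(1-V_1))$, and an identical Gamma-to-Beta argument applied to the factorization $Z_k \stackrel{D}{=} \gamma_k(\alpha,\beta)/\gamma(\beta)$ shows that, conditional on $V_k$, $Z_k \sim \Beta(\beta V_k, \beta(1 - V_k))$. Applying the standard moment formula for the Beta distribution gives
\bal
\E[Z_k^m \mid V_k] = \frac{(\beta V_k)_{(m)}}{(\beta)_{(m)}}.
\eal

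Step 2: Stirling expansion of the rising factorial. By the very definition of ${m \brack j}$ recalled in the introduction, $(\beta V_k)_{(m)} = \sum_{j=0}^m {m \brack j} (\beta V_k)^j$, and since ${m \brack 0} = 0$ for $m \geq 1$, the sum may be started at $j=1$. Taking expectations,
\bal
\E[Z_k^m] = \frac{1}{(\beta)_{(m)}} \sum_{j=1}^m {m \brack j} \beta^j \, \E[V_k^j].
\eal

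Step 3: Computing $\E[V_k^j]$ via stick-breaking. Because $V_k = U_k \prod_{i=1}^{k-1}(1-U_i)$ with iid $U_i \sim \Beta(1,\alpha)$, independence gives $\E[V_k^j] = \E[U_k^j] \cdot \E[(1-U_1)^j]^{k-1}$. A direct integration against the density $\alpha(1-u)^{\alpha-1}$ yields
\bal
\E[U^j] = \alpha B(j+1, \alpha) = \frac{j!}{(\alpha+1)_{(j)}}, \qquad \E[(1-U)^j] = \frac{\alpha}{\alpha+j}.
\eal
Substituting these into Step 2 produces exactly the claimed expression.

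There is no real obstacle here — the argument is a routine three-line chain of identities (Gamma/Beta $\to$ Stirling expansion $\to$ stick-breaking moments). The only points requiring a little care are (i) noting that ${m \brack 0}=0$ for $m\ge 1$ so that the $j=0$ term drops out cleanly, and (ii) keeping the Pochhammer indexing consistent between the Beta moment formula and the denominator $(\alpha+1)_{(j)}$ that arises from $\E[U^j]$.
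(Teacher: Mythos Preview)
Your proposal is correct and follows essentially the same route as the paper: condition on the GEM weights to get the Beta moment $\E[Z_k^m\mid \bm{V}]=(\beta V_k)_{(m)}/(\beta)_{(m)}$, expand via the Stirling numbers, and then compute $\E[V_k^j]$ using the stick-breaking independence. The only cosmetic difference is that the paper conditions on $(V_1,\ldots,V_k)$ rather than on $V_k$ alone, but since the conditional law of $Z_k$ given $\bm{V}$ depends only on $V_k$, this is immaterial.
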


\begin{proof}
The conditional expectation of $Z_k^m$ given $\bm{V}$ is
\bal
\mathbb{E}(Z_k^m \vert V_1, \ldots, V_k) = \frac{(\beta V_k)_{(m)}}{(\beta)_{(m)}} = \frac{1}{(\beta)_{(m)}} \sum_{j=1}^m {m \brack j} \beta^j V_k^j
\eal
so that
\bal
\mathbb{E}(Z_k^m) = \mathbb{E}(\mathbb{E}(Z_k^m \vert V_1, \ldots, V_k)) = \frac{1}{(\beta)_{(m)}} \sum_{j=1}^m {m \brack j} \beta^j \mathbb{E}(V_k^j). 
\eal
The expectation of $V_k^j$ is
\bal
\mathbb{E}(V_k^j) &= \mathbb{E}\left(U_k^j \prod_{i=1}^{k-1} (1 - U_i)^j\right) = \mathbb{E}(U_k^j) \prod_{i=1}^{k-1} \mathbb{E}[(1-U_k)^j] \\
&= \frac{(1)_{(j)}}{(1 + \alpha)_{(j)}} \left(\frac{(\alpha)_{(j)}}{(\alpha + 1)_{(j)}}\right)^{k-1} = \frac{j!}{(\alpha+1)_{(j)}} \left( \frac{\alpha}{\alpha + j} \right)^{k-1}.
\eal
Plugging this back into the formula for $\mathbb{E}(Z_k^m)$ finishes the proof. 
\end{proof}

We can now calculate the expectation of $H_m(\alpha, \beta)$. 

\begin{lemma}\label{homozygositymean}
For all $m\geq 2$,  
\bal
\mathbb{E}(H_m(\alpha, \beta))  = \frac{1}{(\beta)_{(m)}} \sum_{j=1}^m {m \brack j} \beta^j \frac{\Gamma(j)}{(\alpha+1)_{(j-1)}}.
\eal
\end{lemma}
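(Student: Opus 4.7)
The plan is to use the definition $H_m(\alpha,\beta) = \sum_{k=1}^\infty Z_k^m$, pass the expectation inside the sum by Tonelli's theorem (the summands are nonnegative), invoke the preceding Lemma \ref{Zmoments} to substitute the closed form for $\mathbb{E}(Z_k^m)$, exchange the order of the finite $j$-sum with the infinite $k$-sum, and then evaluate a geometric series in $k$.

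Concretely, after substitution I would be looking at
\bal
\mathbb{E}(H_m(\alpha,\beta)) = \frac{1}{(\beta)_{(m)}}\sum_{j=1}^m {m\brack j}\beta^j \frac{j!}{(\alpha+1)_{(j)}}\sum_{k=1}^\infty \left(\frac{\alpha}{\alpha+j}\right)^{k-1}.
\eal
Since $\alpha/(\alpha+j) \in (0,1)$ for each $j \geq 1$, the inner geometric series sums to $(\alpha+j)/j$. The remaining factor simplifies as
\bal
\frac{j!}{(\alpha+1)_{(j)}}\cdot\frac{\alpha+j}{j} = \frac{(j-1)!\,(\alpha+j)}{(\alpha+1)(\alpha+2)\cdots(\alpha+j)} = \frac{\Gamma(j)}{(\alpha+1)_{(j-1)}},
\eal
where the $j=1$ case reads $1/(\alpha+1)\cdot(\alpha+1) = 1 = \Gamma(1)/(\alpha+1)_{(0)}$, consistent with the convention that the empty rising factorial is $1$. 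Substituting back yields the claimed identity.

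There is no real obstacle: the only subtlety is justifying the interchange of sum and expectation, which is immediate by Tonelli, and checking the $j=1$ boundary case of the rising-factorial simplification. The proof is essentially a two-line computation once Lemma \ref{Zmoments} is in hand.
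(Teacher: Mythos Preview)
Your proposal is correct and follows essentially the same approach as the paper: both invoke Lemma~\ref{Zmoments}, interchange the $j$-sum and $k$-sum, and evaluate the geometric series $\sum_{k\geq 1}(\alpha/(\alpha+j))^{k-1}=(\alpha+j)/j$ to collapse $\frac{j!}{(\alpha+1)_{(j)}}\cdot\frac{\alpha+j}{j}$ into $\frac{\Gamma(j)}{(\alpha+1)_{(j-1)}}$. Your explicit mention of Tonelli and the $j=1$ boundary check are minor additions that the paper leaves implicit.
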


\begin{proof}
Using Lemma \ref{Zmoments},
\bal
\mathbb{E}(H_m(\alpha, \beta)) &= \sum_{k=1}^\infty \mathbb{E}(Z_k^m) = \sum_{k=1}^\infty \frac{1}{(\beta)_{(m)}} \sum_{j=1}^m {m \brack j} \beta^j \frac{j!}{(\alpha+1)_{(j)}} \left( \frac{\alpha}{\alpha + j} \right)^{k-1} \\
&= \frac{1}{(\beta)_{(m)}} \sum_{j=1}^m {m \brack j} \beta^j \frac{j!}{(\alpha+1)_{(j)}} \sum_{k=0}^\infty \left( \frac{\alpha}{\alpha + j} \right)^k \\
&= \frac{1}{(\beta)_{(m)}} \sum_{j=1}^m {m \brack j} \beta^j \frac{\Gamma(j)}{(\alpha+1)_{(j-1)}}. \qedhere
\eal
\end{proof}

{\bf Remark:}  Lemma \ref{homozygositymean} is a special case of Theorem 1 in \cite{CLP18}; see their Example 1 with $k = 1$ and $n_1 = m$.

Observe that the mean of the homozygosity is asymptotically
\bal
\mathbb{E}(H_m(\alpha, \beta)) \sim \frac{1}{\beta^{m-1}} \sum_{j=1}^m {m \brack j} \Gamma(j) \left(\frac{\beta}{\alpha}\right)^{j-1} \sim \frac{1}{\beta^{m-1}} \sum_{j=1}^m {m \brack j} \frac{\Gamma(j)}{c^{j-1}}
\eal
for large $\alpha, \beta$. For notational simplicity, define 
\bal
f(\beta;m,c) = \frac{1}{\beta^{m-1}} \sum_{j=1}^m {m \brack j} \frac{1}{c^{j-1}} \Gamma(j).
\eal 
The next proposition provides a law of large numbers for the homozygosity. 

\begin{proposition}\label{LLN}
The homozygosity satisfies 
\bal
\frac{H_m(\alpha, \beta)}{f(\beta; m,c)} \to 1
\eal
in probability as $\alpha, \beta \to \infty$ such that $\frac{\alpha}{\beta} \to c$. 
\end{proposition}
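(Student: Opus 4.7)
The plan is to apply Chebyshev's inequality. First, Lemma~\ref{homozygositymean} together with the elementary asymptotics $(\beta)_{(m)}\sim\beta^m$ and $(\alpha+1)_{(j-1)}\sim(c\beta)^{j-1}$ gives $\mathbb{E}(H_m(\alpha,\beta))/f(\beta;m,c)\to 1$. Thus it suffices to show that $\mathrm{Var}(H_m(\alpha,\beta))=o(f(\beta;m,c)^2)=o(\beta^{-2(m-1)})$.

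To bound the variance, I would condition on the level-one weight vector $\bm{V}$ and use the law of total variance:
$$\mathrm{Var}(H_m)=\mathbb{E}[\mathrm{Var}(H_m\mid\bm{V})]+\mathrm{Var}[\mathbb{E}(H_m\mid\bm{V})].$$
For the outer term, the computation in Lemma~\ref{Zmoments} yields
$$\mathbb{E}(H_m\mid\bm{V})=\frac{1}{(\beta)_{(m)}}\sum_{j=1}^m{m\brack j}\beta^j H_j(\alpha),$$
so $\mathrm{Var}[\mathbb{E}(H_m\mid\bm{V})]$ reduces to a linear combination of covariances of the one-level homozygosities $H_j(\alpha)$. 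Proposition~\ref{JKKCLT} furnishes these to leading order $\alpha^{-(i+j-1)}$; substituting back, the dominant terms have $i+j=2m$ and yield $\mathrm{Var}[\mathbb{E}(H_m\mid\bm{V})]=O(\beta^{-(2m-1)})$.

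For the inner term I would use the Dirichlet moment identity
$$\mathbb{E}[Z_k^a Z_l^b\mid\bm{V}]=\frac{(\beta V_k)_{(a)}(\beta V_l)_{(b)}}{(\beta)_{(a+b)}}\qquad (k\neq l)$$
(with the usual adjustment for $k=l$) to expand $\mathrm{Var}(H_m\mid\bm{V})$ as a polynomial in the $\beta V_k$ with coefficient $1/(\beta)_{(2m)}$, minus the square of the outer-mean expansion with coefficient $1/(\beta)_{(m)}^2$. The critical observation is the cancellation $1/(\beta)_{(2m)}-1/(\beta)_{(m)}^2=O(\beta^{-(2m+1)})$; coupled with the moments $\mathbb{E}(V_k^j)$ computed in the proof of Lemma~\ref{Zmoments}, which make the surviving polynomial $O(\beta^{2m})$ in expectation, this produces $\mathbb{E}[\mathrm{Var}(H_m\mid\bm{V})]=O(\beta^{-(2m-1)})$.

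Combining, $\mathrm{Var}(H_m)=O(\beta^{-(2m-1)})=o(\beta^{-2(m-1)})$, and Chebyshev's inequality then gives $\mathbb{P}(|H_m/f-1|>\epsilon)\to 0$ for every $\epsilon>0$. The main obstacle is the arithmetic bookkeeping for the inner variance: verifying that the difference between rising factorials $(\beta)_{(2m)}$ and $(\beta)_{(m)}^2$, expanded via $(x)_{(n)}=\sum_k{n\brack k}x^k$, produces exactly one extra power of $\beta^{-1}$ so that the Dirichlet fluctuation at level two is of the same order as the Poisson--Dirichlet fluctuation at level one. Once this cancellation is confirmed, all remaining estimates are routine.
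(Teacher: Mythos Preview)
Your plan is correct and is essentially the paper's: both argue via Chebyshev and a second-moment bound obtained by conditioning on $\bm V$. The paper organizes the second moment through the gamma representation, obtaining $\mathbb E[H_m^2]=\mathbb E[H_{2m}]+\mathbb E\big[(\mathbb E[H_m\mid\bm V])^2\big]-\mathbb E\big[\sum_i(\mathbb E[Z_i^m\mid\bm V])^2\big]$, while you use the law of total variance together with Dirichlet moment identities; the two are equivalent re-arrangements of the same calculation.

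Two cautions on your bookkeeping. First, the cancellation $1/(\beta)_{(2m)}-1/(\beta)_{(m)}^2=O(\beta^{-(2m+1)})$ only disposes of the off-diagonal part of $\mathrm{Var}(H_m\mid\bm V)$, and that off-diagonal polynomial $\big(\sum_k(\beta V_k)_{(m)}\big)^2$ has expectation of order $\beta^2$, not $\beta^{2m}$, since $\sum_k\mathbb E[V_k^j]\asymp\alpha^{-(j-1)}$ collapses each $\beta^j$ to $O(\beta)$; your stated arithmetic $O(\beta^{2m})\cdot O(\beta^{-(2m+1)})$ would give the wrong order. The diagonal remainder $\sum_k[(\beta V_k)_{(2m)}-(\beta V_k)_{(m)}^2]/(\beta)_{(2m)}$ must be treated separately and, in expectation, is exactly $\mathbb E[H_{2m}]$ minus a companion term, both $O(\beta^{-(2m-1)})$. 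Second, Proposition~\ref{JKKCLT} is a distributional statement and does not by itself give $\mathrm{Cov}(H_i(\alpha),H_j(\alpha))\asymp\alpha^{-(i+j-1)}$; for the outer-variance term you need the direct moment computation (which the paper performs) or a uniform-integrability argument. Once these are tightened, your outline goes through.
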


\begin{proof} By Chebyshev's inequality it suffices to show that the variance of $\frac{H_m(\alpha, \beta)}{f(\beta; m,c)}$ converges to zero. Applying the gamma representation and the Gamma-Dirichlet algebra, we obtain
\bal
\mathbb{E}[H_m^2(\alpha,\beta)] &= \left(\mathbb{E}[\gamma(\beta)^{2m}]\right)^{-1}\left\{\mathbb{E}\left[\sum_{i=1}^\infty \gamma_i^{2m}(\alpha,\beta) \right] + \mathbb{E}\left[ \sum_{i,j=1}^\infty \mathbb{E}[\gamma_i^m(\alpha,\beta) \gamma_j^m(\alpha,\beta) \mid \bm{V}] \right] \right. \\
&\qquad \left. - \mathbb{E}\left[\sum_{i=1}^\infty \left(\mathbb{E}[\gamma_i^m(\alpha,\beta) \mid \bm{V}]\right)^2\right] \right\} \\
&= \mathbb{E}[H_{2m}(\alpha,\beta)]+\mathbb{E}\left[\left(\mathbb{E}[H_m(\alpha,\beta)\mid \bm{V}]\right)^2\right] -\mathbb{E}\left[\sum_{i=1}^\infty \left(\mathbb{E}[Z_i^m \mid \bm{V}]\right)^2\right].
\eal 
By Lemma \ref{homozygositymean}, we have
\bal
\mathbb{E}[H_{2m}(\alpha,\beta)]\asymp \beta^{-(2m-1)}.
\eal
Noting that
\bal
\mathbb{E}[Z_i^m \mid \bm{V}]  = \frac{1}{(\beta)_{(m)}} \sum_{j=1}^m {m \brack j} \beta^j V_i^j,
\eal
it follows that
\bal
\mathbb{E}\left[\sum_{i=1}^\infty \left(\mathbb{E}[Z_i^m \mid \bm{V}]\right)^2\right] &=\left( \frac{1}{(\beta)_{(m)}}\right)^2 \sum_{i=1}^\infty \mathbb{E}\left[\sum_{j,l=1}^m {m \brack j}{m \brack l} \beta^{j+l} V_i^{j+l}\right] \\
&= \left( \frac{1}{(\beta)_{(m)}}\right)^2 \sum_{i=1}^\infty \left[\sum_{j,l=1}^m {m \brack j}{m \brack l} \beta^{j+l} \left(\frac{\alpha}{\alpha+j+l}\right)^{i-1}\frac{(j+l)!}{(\alpha+1)_{(j+l)}}\right]\\
&\asymp \beta^{-(2m-1)}.
\eal

On the other hand,
\bal
\mathbb{E}[H_m(\alpha,\beta) \mid \bm{V}] = \frac{1}{(\beta)_{(m)}} \sum_{j=1}^m {m \brack j} \beta^j\sum_{i=1}^\infty V_i^j =  \frac{1}{(\beta)_{(m)}} \sum_{j=1}^m {m \brack j} \beta^j H_j(\alpha),
\eal
which implies that
\bal
&\mathbb{E}\left[\left(\mathbb{E}[H_m(\alpha,\beta) \mid \bm{V}]\right)^2\right] \\
&= \left(\frac{1}{(\beta)_{(m)}}\right)^2 \sum_{i, j=1}^m {m \brack i}{m \brack j} \beta^{i+j} \mathbb{E}[H_i(\alpha)H_j(\alpha)]\\
&= \left(\frac{1}{(\beta)_{(m)}}\right)^2 \sum_{i, j=1}^m {m \brack i}{m \brack j} \beta^{i+j} \sum_{l,r=1}^\infty \mathbb{E}[V_l^i V_r^j]\\
&= \frac{1}{(\beta)^2_{(m)}} \sum_{i, j=1}^m {m \brack i}{m \brack j} \beta^{i+j} \frac{\alpha \Gamma(i+1)}{(j+\alpha)_{(i+1)}}\frac{\Gamma(j+1)}{(\alpha+1)_{(j)}}\\
& \times \left[\sum_{1\leq l<r<\infty} \left(\frac{\alpha}{i+j+\alpha}\right)^{l-1}  \left(\frac{\alpha}{j+\alpha}\right)^{r-l-1}+\sum_{1\leq r<l<\infty} \left(\frac{\alpha}{i+j+\alpha}\right)^{r-1}  \left(\frac{\alpha}{i+\alpha}\right)^{l-r-1}\right]\\
&= \frac{1}{(\beta)^2_{(m)}} \sum_{i, j=1}^m {m \brack i}{m \brack j} \beta^{i+j} \frac{\alpha \Gamma(i)}{(j+\alpha)_{(i)}}\frac{\Gamma(j)}{(\alpha+1)_{(j-1)}} \\
& \sim f(\beta;m,c)^2.
\eal
Combining the above yields the result.
\end{proof}

We now give a decomposition of the scaled homozygosity $\tilde{H}_m(\alpha, \beta)$. Let
\bal
X_m(\alpha, \beta) &=\frac{\sqrt{\beta}}{f(\beta;m,c)} \sum_{k=1}^\infty \frac{\gamma_k^m(\alpha, \beta) - \frac{\Gamma(\beta V_k + m)}{\Gamma(\beta V_k)}}{\beta^m} \\
Y_m(\alpha, \beta) &= \frac{\sqrt{\beta}}{f(\beta;m,c)} \left( \sum_{k=1}^\infty \frac{\Gamma(\beta V_k + m)}{\beta^m \Gamma(\beta V_k)} - \frac{1}{\beta^{m-1}} \sum_{j=1}^m {m \brack j} \frac{\Gamma(j)}{( \alpha/\beta )^{j-1}} \right).
\eal
Then for all $m \geq 2$, 
\bal
\tilde{H}_m(\alpha,\beta) &= X_m(\alpha, \beta) + Y_m(\alpha, \beta) \\
&+ \sqrt{\beta} \left( \left( \frac{\beta}{\gamma(\beta)} \right)^m - 1 \right) \frac{\sum_{k=1}^\infty\frac{\gamma_k^m(\alpha, \beta)}{\beta^m}}{f(\beta; m,c)}.
\eal
We refer to $X_m(\alpha, \beta)$ as the {\em level 2 contribution} and $Y_m(\alpha, \beta)$ as the {\em level 1 contribution}. 

\subsection{CLT for the Level 1 Contribution} \label{SecCLTLevel1}

In this section, we prove a central limit theorem for the level 1 contribution. 

\begin{lemma} \label{Level1CLTHDP}
The level 1 contribution satisfies
\bal
Y_m(\alpha, \beta) = \frac{\sqrt{\beta}}{f(\beta;m,c)} \left( \sum_{k=1}^\infty \frac{\Gamma(\beta V_k + m)}{\beta^m \Gamma(\beta V_k)} - \frac{1}{\beta^{m-1}} \sum_{j=1}^m {m \brack j} \frac{\Gamma(j)}{( \alpha/\beta )^{j-1}} \right) \xrightarrow{D} N(0, \sigma_1^2)
\eal
as $\alpha, \beta \to \infty$ such that $\frac{\alpha}{\beta} \to c$, where the variance is given by
\bal
\sigma_1^2 = \frac{\sum_{1 \leq i,j \leq m}  {m \brack i}{m \brack j} \frac{\Gamma(i+j) - \Gamma(i+1)\Gamma(j+1)}{c^{i+j-1}}}{\left( \sum_{j=1}^m {m \brack j} \frac{\Gamma(j)}{c^{j-1}} \right)^2}.
\eal
\end{lemma}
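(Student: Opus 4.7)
The plan is to rewrite the level 1 contribution as a linear combination of the scaled single-level homozygosities $\tilde{H}_j(\alpha)$ of Proposition \ref{JKKCLT}, and then apply the Joyce--Krone--Kurtz multivariate CLT together with Slutsky's theorem. Expanding the rising factorial via Stirling numbers of the first kind gives
\[
\frac{\Gamma(\beta V_k + m)}{\Gamma(\beta V_k)} = (\beta V_k)_{(m)} = \sum_{j=1}^m {m \brack j}(\beta V_k)^j,
\]
so summing over $k$ and recognizing $H_j(\alpha) = \sum_{k=1}^\infty V_k^j$ yields
\[
\sum_{k=1}^\infty \frac{\Gamma(\beta V_k+m)}{\beta^m \Gamma(\beta V_k)} = \sum_{j=1}^m {m \brack j}\beta^{j-m} H_j(\alpha).
\]
Since $\frac{1}{\beta^{m-1}} \sum_{j=1}^m {m\brack j}\Gamma(j)/(\alpha/\beta)^{j-1} = \sum_{j=1}^m {m\brack j}\beta^{j-m}\Gamma(j)/\alpha^{j-1}$, combining the two sums produces
\[
Y_m(\alpha,\beta) = \frac{\sqrt{\beta}}{f(\beta;m,c)} \sum_{j=1}^m {m \brack j}\beta^{j-m}\Bigl(H_j(\alpha) - \frac{\Gamma(j)}{\alpha^{j-1}}\Bigr).
\]

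Next I would substitute $H_j(\alpha) - \Gamma(j)/\alpha^{j-1} = \Gamma(j)\tilde{H}_j(\alpha)/\alpha^{j-1/2}$, which follows directly from the definition of $\tilde{H}_j(\alpha)$ in Proposition \ref{JKKCLT}. After collecting powers of $\alpha/\beta$, this gives
\[
Y_m(\alpha,\beta) = \sum_{j=1}^m \frac{{m \brack j}\Gamma(j)(\beta/\alpha)^{j-1/2}}{\sum_{l=1}^m {m\brack l}\Gamma(l)/c^{l-1}}\,\tilde{H}_j(\alpha).
\]
Since $H_1(\alpha) = 1$ almost surely, the $j=1$ term is identically zero, so the sum effectively runs over $j = 2,\ldots,m$. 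Under the limiting procedure $\alpha/\beta \to c$, the coefficient of $\tilde{H}_j(\alpha)$ converges deterministically to
\[
a_j := \frac{{m\brack j}\Gamma(j)}{c^{j-1/2}\sum_{l=1}^m {m\brack l}\Gamma(l)/c^{l-1}}.
\]
Applying Proposition \ref{JKKCLT} to the finite vector $(\tilde{H}_2(\alpha),\ldots,\tilde{H}_m(\alpha))$ and invoking Slutsky's theorem (which handles both the deterministic coefficient convergence and the fact that $\alpha \to \infty$ whenever $\beta \to \infty$ with $\alpha/\beta \to c$) shows that $Y_m(\alpha,\beta) \xrightarrow{D} \sum_{j=2}^m a_j H_j$, a centered Gaussian.

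The final step is to verify that the variance $\sum_{i,j=2}^m a_i a_j \cov(H_i,H_j)$ matches the expression for $\sigma_1^2$. Using the JKK covariance formula $\cov(H_i,H_j) = (\Gamma(i+j)-\Gamma(i+1)\Gamma(j+1))/(\Gamma(i)\Gamma(j))$, the $\Gamma(i)\Gamma(j)$ factors from the numerators of $a_i a_j$ cancel exactly, leaving
\[
\sigma_1^2 = \frac{\sum_{2\leq i,j\leq m}{m\brack i}{m\brack j}\bigl(\Gamma(i+j)-\Gamma(i+1)\Gamma(j+1)\bigr)/c^{i+j-1}}{\bigl(\sum_{l=1}^m {m\brack l}\Gamma(l)/c^{l-1}\bigr)^2}.
\]
The sum may be extended to include $i=1$ or $j=1$ at no cost, since $\Gamma(i+1) - \Gamma(i+1)\Gamma(2) = 0$ in those cases; this recovers the stated formula. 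The main obstacle here is essentially bookkeeping: carefully tracking powers of $\alpha$ and $\beta$ through the Stirling expansion and confirming that the asymptotic coefficients match, while the real analytic work is already supplied by Proposition \ref{JKKCLT}.
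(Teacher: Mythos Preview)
Your proof is correct and follows essentially the same approach as the paper: expand the rising factorial via Stirling numbers to write $Y_m(\alpha,\beta)$ as a linear combination of the scaled one-level homozygosities $\tilde{H}_j(\alpha)$, apply the Joyce--Krone--Kurtz multivariate CLT (Proposition~\ref{JKKCLT}), and then read off the variance, extending the summation range to $1\le i,j\le m$ at the end. The only cosmetic difference is that you invoke Slutsky explicitly for the coefficient convergence, whereas the paper simply passes to the limit in the coefficients.
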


\begin{proof}
By direct calculation we have 
\bal
Y_m(\alpha, \beta) &= \frac{\sqrt{\beta}}{\sum_{j=1}^m {m \brack j} \frac{\Gamma(j)}{c^{j-1}}} \left( \sum_{j=1}^m {m \brack j} \beta^{j-1} \sum_{k=1}^\infty V_k^j - \sum_{j=1}^m {m \brack j} \frac{\Gamma(j)}{( \alpha/\beta )^{j-1}} \right) \\
&= \sqrt{\frac{\beta}{\alpha}} \left( \frac{1}{ \sum_{j=1}^m {m \brack j} \frac{\Gamma(j)}{c^{j-1}}} \sum_{j=2}^{m} {m \brack j}\frac{\Gamma(j)}{( \alpha/\beta )^{j-1}} \left[ \sqrt{\alpha} \left( \frac{\alpha^{j-1}}{\Gamma(j)} \sum_{k=1}^\infty V_k^j - 1 \right) \right] \right) \\
&=\sqrt{\frac{\beta}{\alpha}} \left( \frac{1}{ \sum_{j=1}^m {m \brack j} \frac{\Gamma(j)}{c^{j-1}}} \sum_{j=2}^{m} {m \brack j}\frac{\Gamma(j)}{( \alpha/\beta )^{j-1}} \tilde{H}_j(\alpha) \right),
\eal
where $\tilde{H}_j(\alpha) = \sqrt{\alpha} \left( \frac{\alpha^{j-1}}{\Gamma(j)}H_j(\alpha) - 1 \right)$ is the scaled level 1 homozygosity. 

By Proposition \ref{JKKCLT},
\bal
(\tilde{H}_2(\alpha), \tilde{H}_3(\alpha),\ldots) \xrightarrow{D} (H_2,H_3,\ldots)
\eal
as $\alpha \to \infty$, where for each $m \geq 2$, $(H_2,\ldots, H_m)$ is a multivariate normal distribution with mean vector $0$ and covariance matrix
\bal
\cov(H_i, H_j)= \frac{\Gamma(i+j) - \Gamma(i+1)\Gamma(j+1)}{\Gamma(i)\Gamma(j)}
\eal
for $2 \leq i,j \leq m$. 

Since $Y_m(\alpha, \beta)$ is a linear combination of $(\tilde{H}_2(\alpha), \ldots, \tilde{H}_m(\alpha))$, which is asymptotically multivariate normal, it follows that $Y_m(\alpha, \beta)$ converges in distribution to a univariate normal with mean $0$ and variance
\bal
\sigma_1^2 &= \lim_{\alpha, \beta \to \infty} \frac{\beta/\alpha}{\left( \sum_{j=1}^m {m \brack j} \frac{\Gamma(j)}{c^{j-1}} \right)^2} \Var\left( \sum_{j=2}^m {m \brack j} \frac{\Gamma(j)}{( \alpha/\beta )^{j-1}} \tilde{H}_j(\alpha)  \right) \\
&= \lim_{\alpha, \beta \to \infty} \frac{\beta/\alpha}{\left( \sum_{j=1}^m {m \brack j} \frac{\Gamma(j)}{c^{j-1}} \right)^2} \sum_{2 \leq i,j \leq m}  {m \brack i}{m \brack j} \frac{\Gamma(i)\Gamma(j)}{(\alpha/\beta)^{i+j-2}} \cov\left( \tilde{H}_i(\alpha), \tilde{H}_j(\alpha) \right) \\
&= \frac{1}{c \left( \sum_{j=1}^m {m \brack j} \frac{\Gamma(j)}{c^{j-1}} \right)^2} \sum_{2 \leq i,j \leq m}  {m \brack i}{m \brack j} \frac{\Gamma(i)\Gamma(j)}{c^{i+j-2}} \cov\left( H_i, H_j \right) \\
&= \frac{\sum_{2 \leq i,j \leq m}  {m \brack i}{m \brack j} \frac{\Gamma(i+j) - \Gamma(i+1)\Gamma(j+1)}{c^{i+j-1}}}{\left( \sum_{j=1}^m {m \brack j} \frac{\Gamma(j)}{c^{j-1}} \right)^2}.  
\eal
Noting that $(\Gamma(2) - \Gamma(2)\Gamma(2)) + \sum_{1 < j \leq m} (\Gamma(1 + j) - \Gamma(2)\Gamma(j + 1)) + \sum_{1 < i \leq m} (\Gamma(i + 1) - \Gamma(i+1)\Gamma(2)) = 0$, we can extend the indices of the summation to get
\bal
\sigma_1^2 &= \frac{\sum_{1 \leq i,j \leq m}  {m \brack i}{m \brack j} \frac{\Gamma(i+j) - \Gamma(i+1)\Gamma(j+1)}{c^{i+j-1}}}{\left( \sum_{j=1}^m {m \brack j} \frac{\Gamma(j)}{c^{j-1}} \right)^2}. \qedhere
\eal
\end{proof}

\subsection{CLT for the Level 2 Contribution} \label{SecCLTLevel2}

In this section, we prove a central limit theorem for the level 2 contribution, $X_m(\alpha, \beta)$. We begin with a series of lemmas that we will need, starting with a result on the moments of a gamma random variable. 

\begin{lemma} \label{Gammamoments}
Let $X \sim \Gam(s,1)$ where $s > 0$ and let $m \geq 2$. Then
\bal
&\mathbb{E}[X^m] = \frac{\Gamma(s + m)}{\Gamma(s)}, \\
&\mathbb{E}[(X^m - \mathbb{E}(X^m)^2] = \mathbb{E}[X^{2m}] - (\mathbb{E}[X^m])^2 = \frac{\Gamma(s+2m)}{\Gamma(s)} - \left( \frac{\Gamma(s+m)}{\Gamma(s)} \right)^2,
\eal
and for $s$ sufficiently large, 
\bal
s^{-2m} \mathbb{E}[(X^m - \mathbb{E}[X^m])^2] &= \frac{m^2}{s} + O\left( \frac{1}{s^2} \right) \\
s^{-\ell m} \mathbb{E}[(X^m - \mathbb{E}[X^m])^\ell] &= O\left( \frac{1}{s^2} \right), \qquad \text{for $\ell \geq 3$}.
\eal
\end{lemma}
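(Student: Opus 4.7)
The first two identities are immediate from the $\Gam(s,1)$ density: integrating $x^{s+m-1}e^{-x}/\Gamma(s)$ over $(0,\infty)$ yields $\mathbb{E}[X^m] = \Gamma(s+m)/\Gamma(s) = (s)_{(m)}$, and the variance formula then follows from $\Var(X^m) = \mathbb{E}[X^{2m}] - (\mathbb{E}[X^m])^2$.

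For the third claim, my plan is to expand the two rising factorials using the Stirling-number expansion recalled in the introduction. Since ${m \brack m} = 1$ and ${m \brack m-1} = \binom{m}{2}$, one has $(s)_{(m)} = s^m + \binom{m}{2} s^{m-1} + O(s^{m-2})$ and $(s)_{(2m)} = s^{2m} + \binom{2m}{2} s^{2m-1} + O(s^{2m-2})$. Squaring the first expansion and subtracting from the second, the $s^{2m}$ terms cancel while the coefficient of $s^{2m-1}$ simplifies to $\binom{2m}{2} - 2\binom{m}{2} = m(2m-1) - m(m-1) = m^2$. Dividing through by $s^{2m}$ produces $m^2/s + O(1/s^2)$.

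The fourth claim is the main technical step. My plan is to expand via the binomial theorem $X^m = (s + (X-s))^m = \sum_{k=0}^m \binom{m}{k} s^{m-k}(X-s)^k$, which gives $X^m - \mathbb{E}[X^m] = \sum_{k=1}^m \binom{m}{k} s^{m-k} Z_k$ with $Z_k := (X-s)^k - \mathbb{E}[(X-s)^k]$. The key input is that the central moments $\mu_r := \mathbb{E}[(X-s)^r]$ of $\Gam(s,1)$ are polynomials in $s$ of degree $\lfloor r/2 \rfloor$, with leading coefficient equal to the $r$-th moment of $N(0,1)$; this is standard and can be read off from the cumulants $\kappa_r(X) = (r-1)!\, s$ via the moment-cumulant formula. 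Consequently $\|Z_k\|_{L^p} = O(s^{k/2})$ for every fixed $p \geq 1$, and H\"older's inequality bounds each monomial in the multinomial expansion of $\mathbb{E}\bigl[\bigl(\sum_k \binom{m}{k} s^{m-k}Z_k\bigr)^\ell\bigr]$ by $O(s^{\ell m - (\sum k_i)/2})$. The dominant contribution comes from $k_1 = \cdots = k_\ell = 1$, with value $m^\ell s^{\ell(m-1)} \mu_\ell$.

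The main obstacle is that the naive bound $\mu_\ell = O(s^{\ell/2})$ yields only $O(1/s^{\ell/2})$ after rescaling by $s^{-\ell m}$, which is sufficient for even $\ell \geq 4$ but not for $\ell = 3$. The resolution is an Edgeworth-type cancellation: because $\mu_\ell/s^{\ell/2}$ converges to the $\ell$-th moment of $N(0,1)$, which vanishes for odd $\ell$, one in fact has $\mu_\ell = O(s^{(\ell-1)/2})$ for odd $\ell$. This gains an extra factor $1/\sqrt{s}$ and produces the required $O(1/s^2)$ bound at $\ell = 3$, with strictly smaller bounds for all $\ell \geq 5$. I would make this concrete by invoking the moment-cumulant formula, which expresses $\mu_r$ as a sum over set partitions of $r$ with no singleton block, so that its degree in $s$ equals the maximum number of parts in such a partition, namely $\lfloor r/2 \rfloor$.
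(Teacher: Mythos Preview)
Your argument is correct, but it diverges from the paper's route. The paper simply notes that for large $s$ and each $\ell\ge 1$,
\[
\left(\frac{\Gamma(s+m)}{\Gamma(s)}\right)^{\ell}
  = s^{\ell m}\Bigl(1+\frac{\ell m(m-1)}{2s}+O(s^{-2})\Bigr),
\]
and then expands $\mathbb{E}[(X^m-\mathbb{E}X^m)^\ell]=\sum_{j=0}^\ell\binom{\ell}{j}(-1)^{\ell-j}\,\mathbb{E}[X^{jm}](\mathbb{E}[X^m])^{\ell-j}$ directly in terms of such ratios. The alternating sums $\sum_j\binom{\ell}{j}(-1)^{\ell-j}j^k$ vanish for $k<\ell$, so for $\ell\ge 3$ both the $s^{\ell m}$ and $s^{\ell m-1}$ coefficients cancel and one is left with $O(s^{\ell m-2})$; for $\ell=2$ only the leading term cancels and the $1/s$ coefficient computes to $m^2$. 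This is a two-line calculation once the expansion is written down.

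Your approach instead rewrites $X^m-\mathbb{E}[X^m]$ in terms of the centered variable $X-s$ and exploits the polynomial degree of the central moments $\mu_r$ via the cumulant formula. This is sound, and your handling of the odd-$\ell$ case (where the naive bound $\mu_\ell=O(s^{\ell/2})$ falls one power short and you rescue it by noting $\mu_\ell=O(s^{(\ell-1)/2})$ for odd $\ell$) is the right observation. The trade-off: your method is more structural and would adapt readily to other distributions with linear cumulants, whereas the paper's expansion is shorter and uses nothing beyond the raw-moment formula $\mathbb{E}[X^r]=(s)_{(r)}$ already in hand. Both are fine for this lemma; the paper's is the more economical choice here.
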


\begin{proof}
The proof follows from direct computation and the fact that for large $s$ and $\ell \geq 1$,
\bal
&\left[ \frac{\Gamma(s+m)}{\Gamma(s)} \right]^\ell = s^{\ell m}\left(1 + \frac{\ell m(m-1)}{2s} + O\left( \frac{\ell m}{12s^2} \right)\right). \qedhere
\eal 
\end{proof}

The next standard result will be useful in proving the convergence of our characteristic functions. 

\begin{lemma}[Exercise 3.1.1, \cite{Dur19}] \label{expconv}
If $\max_{1 \leq k \leq n} |c_{n,k}| \to 0$, $\sum_{k=1}^n c_{n,k} \to \lambda \in \R$, and $\sup_n \sum_{k=1}^n |c_{n,k}| < \infty,$ then $\prod_{k=1}^n (1 + c_{n,k}) \to e^\lambda$. 
\end{lemma}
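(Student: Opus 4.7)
The plan is to take logarithms and reduce the claim to showing $\sum_{k=1}^n \log(1 + c_{n,k}) \to \lambda$, after which the conclusion follows from continuity of the exponential. The hypothesis $\max_{1 \leq k \leq n} |c_{n,k}| \to 0$ guarantees that for all sufficiently large $n$ one has $|c_{n,k}| \leq 1/2$ uniformly in $k$, so $1 + c_{n,k} > 0$ and $\log(1 + c_{n,k})$ is well-defined; from that point on I may work freely with logarithms.

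The key analytic input is the elementary estimate $|\log(1 + x) - x| \leq x^2$ valid for $|x| \leq 1/2$, which can be read off from the alternating series $\log(1+x) = x - \tfrac{x^2}{2} + \tfrac{x^3}{3} - \cdots$ and the bound $\tfrac{1}{2}(1 + |x| + |x|^2 + \cdots) \leq 1$ on the residual factor. Summing over $k$ gives
\begin{align*}
\left| \sum_{k=1}^n \log(1 + c_{n,k}) - \sum_{k=1}^n c_{n,k} \right|
\leq \sum_{k=1}^n c_{n,k}^2
\leq \Bigl( \max_{1 \leq k \leq n} |c_{n,k}| \Bigr) \sum_{k=1}^n |c_{n,k}|.
\end{align*}
The first factor on the right tends to $0$ by the first hypothesis, and the second factor is uniformly bounded in $n$ by the third hypothesis; hence the remainder vanishes. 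Combining with $\sum_{k=1}^n c_{n,k} \to \lambda$ yields $\sum_{k=1}^n \log(1 + c_{n,k}) \to \lambda$, and exponentiating gives $\prod_{k=1}^n (1 + c_{n,k}) \to e^\lambda$.

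There is no substantive obstacle here: the argument is the textbook logarithm-plus-Taylor-remainder strategy, and the three hypotheses are calibrated exactly to make each step go through (the first to validate the Taylor bound uniformly in $k$, the second to identify the limit of the linear part, and the third to absorb the quadratic remainder via a max-times-sum estimate).
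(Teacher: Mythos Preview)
Your proof is correct and follows essentially the same approach as the paper: take logarithms, bound the difference $\sum_k \log(1+c_{n,k}) - \sum_k c_{n,k}$ via a max-times-sum estimate, and exponentiate. The only cosmetic difference is that the paper bounds each term by $|c_{n,k}|\cdot\bigl|\tfrac{\log(1+c_{n,k})}{c_{n,k}}-1\bigr|$ and uses $\log(1+x)/x\to 1$, whereas you use the explicit Taylor remainder $|\log(1+x)-x|\le x^2$; both lead to the same conclusion.
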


\begin{proof}
Observe that
\bal
\left| \sum_{k=1}^n \log(1 + c_{n,k}) - \sum_{k=1}^n c_{n,k} \right| &\leq \sum_{k=1}^n |c_{n,k}| \left| \frac{\log(1 + c_{n,k})}{c_{n,k}} - 1 \right| \\ 
&\leq \left( \max_{1 \leq k \leq n} \left| \frac{\log(1 + c_{n,k})}{c_{n,k}} - 1 \right|  \right) \sum_{k=1}^n |c_{n,k}| \\
& \to 0
\eal
as $n \to \infty$ by the hypotheses. Therefore $\sum_{k=1}^n \log(1 + c_{n,k}) \to \lambda$ as $n \to \infty$, and the result follows. 
\end{proof}

Finally we will need the following strong law of large numbers for the homozygosity of the Dirichlet process. 

\begin{lemma}\label{as-LLN}
For all $j \geq 1$, 
\bal
\frac{\alpha^{j-1}}{\Gamma(j)}H_j(\alpha) \to 1
\eal
almost surely as $\alpha \to \infty$. Furthermore, this almost sure convergence also holds if $H_j(\alpha)$ is replaced by $\sum_{k=1}^{\lfloor\alpha^2 \rfloor} V_k^j$.
\end{lemma}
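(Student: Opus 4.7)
The plan is to recast the statement as a strong law of large numbers for a pure-jump subordinator. Let $\{\gamma(t):t\ge 0\}$ be the gamma subordinator with L\'evy measure $x^{-1}e^{-x}\,dx$ from Section \ref{Preliminaries}, and for each $j\ge 1$ introduce the auxiliary subordinator
\[
\Gamma_j(t):=\sum_{s\le t}(\Delta\gamma(s))^j,
\]
whose L\'evy measure is the pushforward of $x^{-1}e^{-x}\,dx$ under $x\mapsto x^j$. Since the ranked normalized jumps of $\gamma$ on $[0,\alpha]$ form a $\PD(\alpha)$ sequence and $\varphi_j$ is a symmetric function, under this canonical coupling we obtain the pathwise identity (joint in $\alpha$)
\[
H_j(\alpha)\stackrel{D}{=}\frac{\Gamma_j(\alpha)}{\gamma(\alpha)^j}.
\]

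A direct computation gives $\mathbb{E}[\Gamma_j(1)]=\int_0^\infty x^{j-1}e^{-x}\,dx=\Gamma(j)$ and $\mathbb{E}[\gamma(1)]=1$. Since both $\Gamma_j$ and $\gamma$ are subordinators, the classical strong law of large numbers applied to their iid unit increments, combined with monotonicity to interpolate across continuous $\alpha$, yields
\[
\frac{\Gamma_j(\alpha)}{\alpha}\xrightarrow{a.s.}\Gamma(j),\qquad \frac{\gamma(\alpha)}{\alpha}\xrightarrow{a.s.}1.
\]
Dividing and invoking the continuous mapping theorem gives
\[
\frac{\alpha^{j-1}}{\Gamma(j)}H_j(\alpha)=\frac{1}{\Gamma(j)}\cdot\frac{\Gamma_j(\alpha)/\alpha}{(\gamma(\alpha)/\alpha)^j}\xrightarrow{a.s.}1,
\]
which is the first assertion.

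For the second assertion, set $R_\alpha:=\alpha^{j-1}\sum_{k>\lfloor\alpha^2\rfloor}V_k^j\ge 0$; we need only show $R_\alpha\to 0$ almost surely. Using $\mathbb{E}[V_k^j]=j!\,(\alpha+1)^{-1}_{(j)}(\alpha/(\alpha+j))^{k-1}$ from the computation in the proof of Lemma \ref{Zmoments} and summing the resulting geometric tail,
\[
\mathbb{E}[R_\alpha]=\frac{\alpha^{j-1}\Gamma(j)}{(\alpha+1)_{(j-1)}}\left(\frac{\alpha}{\alpha+j}\right)^{\lfloor\alpha^2\rfloor}.
\]
The prefactor is bounded in $\alpha$ while the last factor is at most $\exp(-j\alpha^2/(\alpha+j))$, decaying faster than any polynomial. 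Markov's inequality along the integer sequence $\alpha=n$ gives $\sum_n\mathbb{P}(R_n>\varepsilon)<\infty$ for every $\varepsilon>0$, so Borel--Cantelli yields $R_n\to 0$ almost surely, and combined with the first part of the lemma this proves the claim. The only real obstacle is the bookkeeping in setting up the auxiliary L\'evy process $\Gamma_j$ and identifying $H_j(\alpha)$ with $\Gamma_j(\alpha)/\gamma(\alpha)^j$; once that identification is in place, everything reduces to the SLLN for subordinators plus an exponential tail estimate.
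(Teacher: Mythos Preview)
Your proof is correct and takes a more direct route than the paper. The paper proceeds by computing $\mathbb{E}\bigl[(\alpha^{j-1}H_j(\alpha)/\Gamma(j)-1)^2\bigr]\asymp\alpha^{-1}$, applying Borel--Cantelli along a geometric subsequence $\alpha_n=\lfloor a^n\rfloor$, and then sandwiching general $\alpha$ between consecutive $\alpha_n$ via monotonicity inequalities from the gamma representation. You bypass the second-moment computation entirely: once $H_j(\alpha)$ is identified with $\Gamma_j(\alpha)/\gamma(\alpha)^j$, the result is immediate from the strong law for subordinators, and the interpolation to continuous $\alpha$ is built into that law. This is cleaner; the paper's approach, on the other hand, produces the quantitative variance bound $\asymp\alpha^{-1}$ as a byproduct, which is informative given the $\sqrt{\alpha}$ CLT scaling used elsewhere. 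For the truncated assertion both arguments reduce to the same exponential tail estimate plus Borel--Cantelli; the paper states this only as an ``observation'' without detail, whereas you supply the calculation. Two minor points: writing $\stackrel{D}{=}$ for what you call a pathwise identity is notationally inconsistent, and you establish $R_n\to 0$ only along integers---extending to continuous $\alpha$ would require specifying how the GEM ordering $V_k$ (as opposed to the symmetric $H_j$) is coupled across $\alpha$, but the paper is equally brief on this and it does not affect the downstream use of the lemma.
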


\begin{proof} By direct calculation, 
\bal
\mathbb{E}[H_j(\alpha)]&= \frac{\Gamma(j)}{(\alpha+1)_{(j-1)}}, \\
\mathbb{E}[H^2_j(\alpha)] & = \sum_{k,l=1}^\infty\mathbb{E}[V_k^j V_l^j]=\frac{\alpha \Gamma(j)^2}{(j+\alpha)_{(j)}(\alpha+1)_{(j-1)}},
\eal
and
\bal
\mathbb{E}\left[\left(\frac{\alpha^{j-1}}{\Gamma(j)}H_j(\alpha)-1\right)^2\right] &=\frac{\alpha^{(2j-1)}}{(j+\alpha)_{(j)}(\alpha+1)_{j-1}} +1 - 2 \frac{\alpha^{j-1}}{(\alpha+1)_{(j-1)}} \asymp \alpha^{-1}. 
\eal

For any $a>1$, set $\alpha_n = \lfloor a^n\rfloor$. Then by the Borel-Cantelli lemma,
\bal
\frac{\alpha_n^{j-1}}{\Gamma(j)}H_j(\alpha_n) \to 1
\eal
almost surely as $n \to \infty$.

For general $\alpha$, we can find $n$ such that $\alpha_n\leq \alpha\leq \alpha_{n+1}$. Applying the gamma representation for $\bm{V}$, we have that
\bal
\frac{\alpha_n}{\alpha}\frac{\gamma(\alpha_n)/\alpha_n}{\gamma(\alpha)/\alpha} \frac{\alpha_n^{j-1}}{\Gamma(j)}H_j(\alpha_n)\leq \frac{\alpha^{j-1}}{\Gamma(j)}H_j(\alpha)\leq \frac{\alpha_{n+1}}{\alpha}\frac{\gamma(\alpha_{n+1})/\alpha_{n+1}}{\gamma(\alpha)/\alpha} \frac{\alpha_{n+1}^{j-1}}{\Gamma(j)}H_j(\alpha_{n+1}).\eal
The almost sure convergence of $H_j(\alpha)$ follows by letting $n$ tend to infinity followed by letting $a$ converge to $1$. 

Finally the truncated version follows from the observation that 
\bal
\frac{\alpha^{j-1}}{\Gamma(j)}\sum_{k=\lfloor \alpha^2\rfloor+1}^{\infty}V_k^j \to 0
\eal
almost surely as $\alpha \to \infty$.
\end{proof}

We are now ready to prove the central limit theorem for the level 2 contribution. 

\begin{lemma} \label{Level2CLTHDP}
The level 2 contribution satisfies
\bal
X_m(\alpha, \beta) = \frac{\sqrt{\beta}}{f(\beta; m,c)} \sum_{k=1}^\infty \frac{\gamma_k^m(\alpha,\beta) - \frac{\Gamma(\beta V_k + m)}{\Gamma(\beta V_k)}}{\beta^m} \xrightarrow{D} N(0, \sigma_2^2),
\eal
as $\alpha,\beta \to \infty$ such that $\frac{\alpha}{\beta} \to c$, where the variance is given by
\bal
\sigma_2^2 = \frac{ \sum_{j=1}^{2m} {2m \brack j} \frac{\Gamma(j)}{c^{j-1}} - \sum_{1 \leq i,j \leq m} {m \brack i}{m \brack j} \frac{\Gamma(i+j)}{c^{i+j-1}}}{\left( \sum_{j=1}^m {m \brack j} \frac{\Gamma(j)}{c^{j-1}} \right)^2}.
\eal
\end{lemma}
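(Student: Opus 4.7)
The plan is to condition on the weight vector $\bm{V}$. Given $\bm{V}$, the increments $\gamma_k(\alpha,\beta)$ are independent $\Gam(\beta V_k,1)$ random variables, so that
\[
X_m(\alpha,\beta) = \frac{\sqrt{\beta}}{f(\beta;m,c)\beta^m}\sum_{k=1}^{\infty} Y_k,\qquad Y_k := \gamma_k^m(\alpha,\beta)-\mathbb{E}[\gamma_k^m\mid \bm{V}],
\]
is a conditionally centered sum of independent random variables. The strategy is to prove that the conditional characteristic function converges in probability to $\exp(-t^2\sigma_2^2/2)$ and then to conclude the unconditional CLT by bounded convergence.

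\textbf{Conditional variance.} The Stirling-number identity $\Gamma(s+n)/\Gamma(s)=\sum_{j=1}^n {n\brack j}s^j$ yields, after centering and summing,
\[
\sum_{k=1}^{\infty}\Var(\gamma_k^m\mid \bm{V})= \sum_{j=1}^{2m}{2m\brack j}\beta^j H_j(\alpha)-\sum_{1\le i,j\le m}{m\brack i}{m\brack j}\beta^{i+j}H_{i+j}(\alpha).
\]
Writing $\beta^j H_j(\alpha)=\beta(\beta/\alpha)^{j-1}\cdot\alpha^{j-1}H_j(\alpha)$ and invoking Lemma \ref{as-LLN} shows that the right-hand side is asymptotic to $\beta\bigl[\sum_j{2m\brack j}\Gamma(j)c^{-(j-1)}-\sum_{i,j}{m\brack i}{m\brack j}\Gamma(i+j)c^{-(i+j-1)}\bigr]$. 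Multiplying by the scaling $\beta/(f(\beta;m,c)^2\beta^{2m})$ gives $\Var(X_m(\alpha,\beta)\mid \bm{V})\to\sigma_2^2$ almost surely.

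\textbf{Characteristic function.} Set $a=t\sqrt{\beta}/(f(\beta;m,c)\beta^m)=O(\beta^{-1/2})$. A fourth-order Taylor expansion of $z\mapsto e^{iaz}$ together with the moment bounds of Lemma \ref{Gammamoments} yields $\mathbb{E}[e^{itX_m(\alpha,\beta)}\mid \bm{V}]=\prod_{k=1}^{\infty}(1+c_k)$ with $c_k=-\tfrac{1}{2}a^2\Var(Y_k\mid\bm{V})+R_k$, where the remainders $R_k$ combine the cubic and quartic contributions and satisfy $\sum_k|R_k|=O(a^3\beta^{3m-2}H_{3m-2}(\alpha))+O(a^4\beta^{4m-2}H_{4m-2}(\alpha))=O(\beta^{-1/2})$ after a second application of Lemma \ref{as-LLN}. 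Once the hypotheses of Lemma \ref{expconv} are verified in probability, one obtains $\prod_k(1+c_k)\to\exp(-t^2\sigma_2^2/2)$ in probability, and bounded convergence of characteristic functions completes the proof.

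\textbf{Main obstacle.} The delicate hypothesis of Lemma \ref{expconv} is $\max_k|c_k|\to 0$. Since $|c_k|$ is dominated by $\tfrac12 a^2\Var(Y_k\mid\bm{V})=O((\beta V_k)^{2m-1}/\beta)$, this fails unless the largest weight $\max_k V_k$ is small. For $\alpha\to\infty$ the Poisson-Dirichlet maximum does satisfy $\max_k V_k\to 0$ in probability, and combining this with a truncation of the sum at $k\le \lfloor\alpha^2\rfloor$ (together with the tail estimates used in the proof of Lemma \ref{as-LLN}) yields both $\max_k|c_k|\to 0$ and $\sup\sum_k|c_k|<\infty$, completing the verification.
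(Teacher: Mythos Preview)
Your proposal is correct and follows essentially the same approach as the paper: condition on $\bm V$, expand the conditional characteristic function as $\prod_k(1+c_k)$, use Lemma~\ref{as-LLN} to identify the limiting variance $\sigma_2^2$, verify the hypotheses of Lemma~\ref{expconv} (including the truncation at $k\le\lfloor\alpha^2\rfloor$), and pass to the unconditional limit by bounded convergence. The only cosmetic differences are that the paper performs the truncation at the outset and handles the tail $X_{m2}$ separately, and for the delicate condition $\max_k|c_k|\to0$ it gives a self-contained Borel--Cantelli argument on $\max_k V_k^{2m-1}/\sum_k\mathbb E[V_k^{2m-1}]$ rather than appealing to the Poisson--Dirichlet maximum---note that merely $\max_k V_k\to0$ is not enough, since you need the rate $\beta^{2m-2}(\max_k V_k)^{2m-1}\to0$.
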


\begin{proof} We show that the characteristic function of $X_m(\alpha, \beta)$ converges to the characteristic function of a normal random variable, $N(0, \sigma_2^2)$. Define
\bal
X_{m1}(\alpha, \beta) &= \frac{\sqrt{\beta}}{f(\beta; m,c)} \sum_{k=1}^{\lfloor \alpha^2 \rfloor}\frac{\gamma_k^m(\alpha,\beta) - \frac{\Gamma(\beta V_k + m)}{\Gamma(\beta V_k)}}{\beta^m}, \\
X_{m2}(\alpha, \beta) &= \frac{\sqrt{\beta}}{f(\beta; m,c)} \sum_{k=\lfloor \alpha^2 \rfloor+1}^\infty \frac{\gamma_k^m(\alpha,\beta) - \frac{\Gamma(\beta V_k + m)}{\Gamma(\beta V_k)}}{\beta^m},
\eal
and observe that
\bal
X_m(\alpha,\beta) = X_{m1}(\alpha,\beta) + X_{m2}(\alpha,\beta). 
\eal

First we show that $X_{m2}(\alpha,\beta)$ converges to $0$. Noting that
\bal
\mathbb{E}\left[\sum_{k=\lfloor \alpha^2 \rfloor+1}^\infty\frac{\Gamma(\beta V_k+m)}{\Gamma(\beta V_k)}\right] = \sum_{j=1}^m {m\brack j}\frac{ \Gamma(j) \beta^j}{(\alpha+1)_{(j-1)}} 
\left(\frac{\alpha}{\alpha + j}\right)^{\lfloor \alpha^2 \rfloor} 
\eal
it follows by Chebyshev's inequality that
\bal
P\left(|X_{m2}(\alpha,\beta)| > \epsilon \right) &\leq P\left(\frac{\sqrt{\beta}}{\beta^m f(\beta; m,c)} \sum_{k=\lfloor \alpha^2 \rfloor+1}^\infty \left[\gamma_k^m(\alpha,\beta) +\frac{\Gamma(\beta V_k + m)}{\Gamma(\beta V_k)} \right]> \epsilon\right)\\
&\leq \frac{2}{\epsilon \sqrt{\beta}} \left( \sum_{j=1}^m {m \brack j} \frac{\Gamma(j)}{c^{j-1}} \right)^{-1}\sum_{k=\lfloor \alpha^2 \rfloor+1}^\infty\mathbb{E}\left[\frac{\Gamma(\beta V_k + m)}{\Gamma(\beta V_k)} \right]\\
&=\frac{2}{\epsilon \sqrt{\beta}} \left( \sum_{j=1}^m {m \brack j} \frac{\Gamma(j)}{c^{j-1}} \right)^{-1}  \sum_{j=1}^m {m\brack j}\frac{ \Gamma(j) \beta^j }{(\alpha+1)_{(j-1)}} 
\left(\frac{\alpha}{\alpha+j}\right)^{\lfloor \alpha^2 \rfloor} \\
&\to 0
\eal
as $\alpha, \beta \to \infty$ and $\alpha/\beta \to c$. 

Next, conditioning on $\bm{V}$, we can write the characteristic function of $X_{m1}(\alpha, \beta)$ as
\bal
&\mathbb{E}\left( \exp\left( it \frac{\sqrt{\beta}}{f(\beta; m,c)} \sum_{k=1}^{\lfloor \alpha^2 \rfloor} \frac{\gamma^m_k(\alpha, \beta) - \frac{\Gamma(\beta V_k + m)}{\Gamma(\beta V_k)}}{\beta^m} \right) \right) \\
&\qquad = \mathbb{E}\left[ \mathbb{E}\left( \exp\left( it \frac{\sqrt{\beta}}{f(\beta; m,c)} \sum_{k=1}^{\lfloor \alpha^2\rfloor} \frac{\gamma^m_k(\alpha, \beta) - \frac{\Gamma(\beta V_k + m)}{\Gamma(\beta V_k)}}{\beta^m} \right) \middle\vert \bm{V} \right) \right] \\
&\qquad = \mathbb{E}\left[ \prod_{k=1}^{\lfloor \alpha^2 \rfloor} \mathbb{E}\left( \exp\left( it \frac{\sqrt{\beta}}{f(\beta; m,c)} \frac{\gamma^m_k(\alpha,\beta) - \frac{\Gamma(\beta V_k + m)}{\Gamma(\beta V_k)}}{\beta^m}  \right) \middle\vert \bm{V} \right) \right].
\eal
Using Lemma \ref{Gammamoments}, the inner conditional characteristic function can be computed as
\bal
&\mathbb{E}\left( \exp\left( it \frac{\sqrt{\beta}}{f(\beta; m,c)} \frac{\gamma^m_k(\alpha, \beta) - \frac{\Gamma(\beta V_k + m)}{\Gamma(\beta V_k)}}{\beta^m}  \right) \middle\vert \bm{V} \right) \\
&= \mathbb{E}\left( \sum_{\ell = 0}^\infty \frac{ \left( it \frac{\sqrt{\beta}}{f(\beta; m,c)} \right)^\ell}{\ell!} \beta^{-\ell m} \left[ \gamma^m_k(\alpha, \beta) - \frac{\Gamma(\beta V_k + m)}{\Gamma(\beta V_k)} \right]^\ell \middle\vert \bm{V}  \right)  \\
&=  \sum_{\ell = 0}^\infty \frac{ \left( it \frac{\sqrt{\beta}}{f(\beta; m,c)} \right)^\ell}{\ell!} V_k^{\ell m} \mathbb{E}\left((\beta V_k)^{-\ell m} \left[ \gamma^m_k(\alpha, \beta) - \mathbb{E}\left(\gamma_k^m(\alpha, \beta) \vert \bm{V} \right) \right]^\ell \middle\vert \bm{V} \right) \\
&= \left( 1 - \frac{t^2}{2} \left( \frac{\beta}{f(\beta; m,c)^2} \beta^{-2m} \mathbb{E}\left(\left[ \gamma^m_k(\alpha, \beta) -\mathbb{E}\left(\gamma^m_k(\alpha, \beta) \vert \bm{V} \right) \right]^2 \middle\vert \bm{V} \right) \right) + R(t) \right) \\
&= \left( 1 - \frac{t^2}{2} \frac{1}{\beta \left(\sum_{j=1}^m {m \brack j} \frac{\Gamma(j)}{c^{j-1}} \right)^2} \left( \frac{\Gamma(\beta V_k + 2m)}{\Gamma(\beta V_k)} - \left( \frac{\Gamma(\beta V_k + m)}{\Gamma(\beta V_k)} \right)^2 \right) + R(t) \right),
\eal
where the error term $R(t)$ is upper bounded by
\bal
&\min\left\{ \mathbb{E}\left( \left| t \frac{\sqrt{\beta}}{f(\beta; m,c)} \frac{\gamma^m_k(\alpha, \beta) - \mathbb{E}(\gamma^m_k(\alpha, \beta) \vert \bm{V})}{\beta^m} \right|^3 \middle\vert \bm{V} \right), \right. \\
&\qquad \qquad \qquad \left. 2\mathbb{E}\left( \left| t \frac{\sqrt{\beta}}{f(\beta; m,c)} \frac{\gamma^m_k(\alpha, \beta) - \mathbb{E}(\gamma^m_k(\alpha, \beta) \vert \bm{V})}{\beta^m}  \right|^2 \vert \bm{V} \right) \right\} \\
&\qquad = O\left( \frac{t^3 V_k^{3m-2}}{\sqrt{\beta} f(\beta; m,c)^3} \right)
\eal
since the second conditional moment is finite. 

Taking the product of the conditional characteristic functions gives
\bal
\prod_{k=1}^{\lfloor \alpha^2 \rfloor} \mathbb{E}\left( \exp\left( it \frac{\sqrt{\beta}}{f(\beta; m,c)} \frac{\gamma^m_k(\alpha, \beta) - \frac{\Gamma(\beta V_k + m)}{\Gamma(\beta V_k)}}{\beta^m}  \right) \middle\vert \bm{V} \right) = \prod_{k=1}^{\lfloor \alpha^2 \rfloor}(1 + c_{\alpha, \beta, k}),
\eal
where
\bal
c_{\alpha, \beta, k} &=  -\frac{t^2}{2} \frac{1}{\beta \left(\sum_{j=1}^m {m \brack j} \frac{\Gamma(j)}{c^{j-1}} \right)^2} \left( \frac{\Gamma(\beta V_k + 2m)}{\Gamma(\beta V_k)} - \left( \frac{\Gamma(\beta V_k + m)}{\Gamma(\beta V_k)} \right)^2 \right) \\
 &\qquad \qquad + O\left( \frac{t^3 V_k^{3m-2}}{\sqrt{\beta} f(\beta; m,c)^3} \right).
\eal

Next we show that $c_{\alpha, \beta, k}$ satisfies the conditions of Lemma \ref{expconv}.  Indeed, observe that
\bal
\frac{\Gamma(\beta V_k + 2m)}{\Gamma(\beta V_k)} &= \sum_{j=1}^{2m} {2m \brack j} \beta^j V_k^j \\
\left(\frac{\Gamma(\beta V_k + m)}{\Gamma(\beta V_k)}\right)^2 &= \left( \sum_{j=1}^{m} {m \brack j} \beta^j V_k^j \right)^2 = \sum_{1 \leq i, j \leq m} {m \brack i}{m \brack j} \beta^{i+j} V_k^{i+j}, 
\eal
so that
\bal
&\frac{1}{\beta \left(\sum_{j=1}^m {m \brack j} \frac{\Gamma(j)}{c^{j-1}} \right)^2} \left( \frac{\Gamma(\beta V_k + 2m)}{\Gamma(\beta V_k)} - \left( \frac{\Gamma(\beta V_k + m)}{\Gamma(\beta V_k)} \right)^2 \right) \\
&\qquad \qquad = \frac{ \sum_{j=1}^{2m} {2m \brack j} \beta^{j-1} V_k^j - \sum_{1 \leq i, j \leq m} {m \brack i}{m \brack j} \beta^{i+j-1} V_k^{i+j} }{\left( \sum_{j=1}^m {m \brack j} \frac{\Gamma(j)}{c^{j-1}} \right)^2}.
\eal
It follows from Lemma \ref{as-LLN} that almost surely, we have 
\bal
&\lim_{\alpha, \beta \to \infty, \alpha/\beta \to c} \sum_{k=1}^{\lfloor \alpha^2 \rfloor} c_{\alpha, \beta, k} \\
&= -\frac{t^2}{2} \frac{1}{\left( \sum_{j=1}^m {m \brack j} \frac{\Gamma(j)}{c^{j-1}} \right)^2} \lim_{\alpha, \beta \to \infty, \alpha/\beta\to c} \left[ \sum_{j=1}^{2m} {2m \brack j} \beta^{j-1} \frac{\Gamma(j)}{\alpha^{j-1}} \left(\frac{\alpha^{j-1}}{\Gamma(j)}\sum_{k=1}^{\lfloor \alpha^2 \rfloor} V_k^j \right) \right. \\
&\qquad \left. - \sum_{1 \leq i,j \leq m} {m \brack i}{m \brack j} \beta^{i+j-1}  \frac{\Gamma(i+j)}{\alpha^{i+j-1}} \left(\frac{\alpha^{i+j-1}}{\Gamma(i+j)}\sum_{k=1}^{\lfloor \alpha^2 \rfloor} V_k^{i+j} \right) \right. \\
&\qquad \left. + O\left( \frac{t^3 \frac{\Gamma(3m-2)}{\alpha^{3m-3}} \left(\frac{\alpha^{3m-3}}{\Gamma(3m-2)}\sum_{k=1}^\infty V_k^{3m-2} \right)}{\sqrt{\beta} f(\beta; m,c)^3} \right) \right] \\
&= -\frac{t^2}{2} \left( \frac{ \sum_{j=1}^{2m} {2m \brack j} \frac{\Gamma(j)}{c^{j-1}} - \sum_{1 \leq i,j \leq m} {m \brack i}{m \brack j} \frac{\Gamma(i+j)}{c^{i+j-1}}}{\left( \sum_{j=1}^m {m \brack j} \frac{\Gamma(j)}{c^{j-1}} \right)^2} \right).
\eal

It remains to show $\max_{1 \leq k \leq \lfloor \alpha^2 \rfloor} |c_{\alpha, \beta, k}| \to 0$ as $\alpha, \beta, \to \infty$. 
By Lemma \ref{Gammamoments}, we have that 
\bal
c_{\alpha, \beta, k} &= -\frac{t^2}{2}\left( \frac{\beta}{f(\beta; m,c)^2} \beta^{-2m} \mathbb{E}\left( \left[ \gamma^m_k(\alpha, \beta) - \frac{\Gamma(\beta V_k + m)}{\Gamma(\beta V_k)} \right]^2 \middle\vert \bm{V} \right) \right) \\
&\qquad \qquad + O\left( \frac{t^3 V_k^{3m-2}}{\sqrt{\beta} f(\beta; m,c)^3} \right) \\
&= -\frac{t^2}{2}\left( \frac{\beta V_k^{2m}}{f(\beta; m,c)^2} \left( \frac{m^2}{\beta V_k} + O\left( \frac{1}{\beta^2 V_k^2} \right) \right) \right) + O\left( \frac{t^3 V_k^{3m-2}}{\sqrt{\beta} f(\beta; m,c)^3} \right) \\
&= -\frac{t^2}{2}\left( \frac{m^2 V_k^{2m-1}}{f(\beta; m,c)^2}  + O\left( \frac{V_k^{2m-2}}{\beta f(\beta; m,c)^2} \right) \right) + O\left( \frac{t^3 V_k^{3m-2}}{\sqrt{\beta} f(\beta; m,c)^3} \right).
\eal
Next recall that $\sum_{k=1}^\infty \mathbb{E}[V_k^j] = \frac{\Gamma(j)}{(\alpha+1)_{(j-1)}}$. In particular, we have that 
\bal
\frac{\sum_{k=1}^\infty \mathbb{E}[V_k^{2m-1}]}{f(\beta; m,c)^2} = \frac{\frac{\Gamma(2m-1)}{\alpha^{2m-2}}}{\frac{1}{\beta^{2m-2}} \left( \sum_{j=1}^m {m \brack j} \frac{\Gamma(j)}{c^{j-1}} \right)^2} \sim \frac{\Gamma(2m-1)}{c^{2m-2} \left( \sum_{j=1}^m {m \brack j} \frac{\Gamma(j)}{c^{j-1}} \right)^2},
\eal
For any $\epsilon > 0$ and $N \geq 1$, we have that
\bal
&P\left( \frac{\max_{1 \leq k \leq \lfloor \alpha^2 \rfloor} V_k^{2m-1}}{\sum_{k=1}^\infty \mathbb{E}\left[V_k^{2m-1}\right]} > \epsilon \right) 
\leq \sum_{k=1}^{\lfloor \alpha^2\rfloor} P\left( \frac{V_k^{2m-1}}{\sum_{k=1}^\infty \mathbb{E}\left[V_k^{2m-1}\right]} > \epsilon \right) \\
&= \sum_{k=1}^{\lfloor \alpha^2 \rfloor} P\left( V_k^N > \left( \epsilon \sum_{k=1}^\infty \mathbb{E}\left[V_k^{2m-1} \right] \right)^{\frac{N}{2m-1}} \right) \\
&\leq \frac{1}{\left( \epsilon \sum_{k=1}^\infty \mathbb{E}\left[V_k^{2m-1}\right] \right)^{\frac{N}{2m-1}}} \sum_{k=1}^{\lfloor \alpha^2 \rfloor} \mathbb{E}[V_k^N] \\
&= \frac{1}{\left( \epsilon \sum_{k=1}^\infty \mathbb{E}\left[V_k^{2m-1}\right] \right)^{\frac{N}{2m-1}}} \frac{N!}{(\alpha + 1)_{(N)}} \frac{\alpha + N}{N} \left(1 - \left( \frac{\alpha}{\alpha + N}\right)^{\lfloor \alpha^2 \rfloor} \right) \\
&\leq \frac{1}{\left( \epsilon \sum_{k=1}^\infty \mathbb{E}\left[V_k^{2m-1}\right]\right)^{\frac{N}{2m-1}}} \frac{\Gamma(N)}{(\alpha + 1)_{(N-1)}}  \\
&\sim \left(\frac{\epsilon \Gamma(2m-1)}{\alpha^{2m-2}} \right)^{-\frac{N}{2m-1}} \frac{\Gamma(N)}{\alpha^{N-1}}
\eal
Choosing $N = 3(2m-1)$ gives
\bal
P\left( \frac{\max_{1 \leq k \leq \lfloor \alpha^2 \rfloor} V_k^{2m-1}}{\sum_{k=1}^\infty \mathbb{E}\left[V_k^{2m-1}\right]} > \epsilon \right) &\leq \frac{1}{\alpha^2} \frac{\Gamma(6m-3)}{\left( \epsilon \Gamma(2m-1) \right)^3},
\eal
so that 
\bal
\sum_{\alpha = 1}^\infty P\left( \frac{\max_{1 \leq k \leq \lfloor \alpha^2 \rfloor} V_k^{2m-1}}{\sum_{k=1}^\infty \mathbb{E}\left[V_k^{2m-1}\right]} > \epsilon \right) &\leq \sum_{\alpha = 1}^\infty \frac{1}{\alpha^2} \frac{\Gamma(6m-3)}{\left( \epsilon \Gamma(2m-1) \right)^3} < \infty. 
\eal
Thus by Borel-Cantelli, we have that $\frac{\max_{1 \leq k \leq \lfloor \alpha^2 \rfloor} V_k^{2m-1}}{\sum_{k=1}^\infty \mathbb{E}[V_k^{2m-1}]} \to 0$ as $\alpha, \beta \to \infty$. Similarly we can show that 
\bal
\frac{\max_{1 \leq k \leq \lfloor \alpha^2 \rfloor}V_k^{2m-2}}{\sum_{k=1}^\infty E(V_k^{2m-2})} \to 0 \quad \text{and} \quad
\frac{\max_{1\leq k\leq \lfloor \alpha^2 \rfloor} V_k^{3m-2}}{\sum_{k=1}^\infty E(V_k^{3m-2})} \to 0
\eal
as $\alpha, \beta \to \infty$, and that both convergences are uniform. 
Combining the above we have that $\max_{1 \leq k \leq \lfloor \alpha^2 \rfloor} |c_{\alpha, \beta, k}| \to 0$ as $\alpha, \beta \to \infty$.  

Therefore the conditional characteristic function converges as
\bal
\mathbb{E}\left( \exp\left( it \frac{\sqrt{\beta}}{f(\beta; m,c)} \sum_{k=1}^{\lfloor \alpha^2 \rfloor} \frac{\gamma^m_k(\alpha, \beta) - \frac{\Gamma(\beta V_k + m)}{\Gamma(\beta V_k)}}{\beta^m} \right) \middle\vert \bm{V} \right) \to \exp\left( -\frac{t^2 \sigma_2^2}{2}\right),
\eal
where
\bal
\sigma_2^2 = \frac{ \sum_{j=1}^{2m} {2m \brack j} \frac{\Gamma(j)}{c^{j-1}} - \sum_{1 \leq i,j \leq m} {m \brack i}{m \brack j} \frac{\Gamma(i+j)}{c^{i+j-1}}}{\left( \sum_{j=1}^m {m \brack j} \frac{\Gamma(j)}{c^{j-1}} \right)^2}.
\eal
Finally observe that
\bal
&\left| \mathbb{E}\left( \exp\left( it \frac{\sqrt{\beta}}{f(\beta; m,c)} \sum_{k=1}^{\lfloor \alpha^2 \rfloor} \frac{\gamma^m_k(\alpha, \beta) - \frac{\Gamma(\beta V_k + m)}{\Gamma(\beta V_k)}}{\beta^m} \right) \right) - \exp\left( -\frac{t^2 \sigma_2^2}{2}\right) \right| \\
&= \left| \mathbb{E}\left[ \mathbb{E}\left( \exp\left( it \frac{\sqrt{\beta}}{f(\beta; m,c)} \sum_{k=1}^{\lfloor \alpha^2 \rfloor} \frac{\gamma^m_k(\alpha, \beta) - \frac{\Gamma(\beta V_k + m)}{\Gamma(\beta V_k)}}{\beta^m} \right) \middle\vert \bm{V} \right)- \exp\left( -\frac{t^2 \sigma_2^2}{2}\right) \right] \right| \\
&\leq \mathbb{E}\left( \left| \mathbb{E}\left( \exp\left( it \frac{\sqrt{\beta}}{f(\beta; m,c)} \sum_{k=1}^{\lfloor \alpha^2\rfloor} \frac{\gamma^m_k(\alpha, \beta)- \frac{\Gamma(\beta V_k + m)}{\Gamma(\beta V_k)}}{\beta^m} \right) \middle\vert \bm{V} \right) - \exp\left( -\frac{t^2 \sigma_2^2}{2}\right) \right| \right) \\
&\to 0,
\eal
from which it follows that 
\bal
\mathbb{E}\left( \exp\left( it \frac{\sqrt{\beta}}{f(\beta; m,c)} \sum_{k=1}^{\lfloor \alpha^2 \rfloor} \frac{\gamma^m_k(\alpha, \beta) - \frac{\Gamma(\beta V_k + m)}{\Gamma(\beta V_k)}}{\beta^m} \right) \right) \to \exp\left( -\frac{t^2 \sigma_2^2}{2}\right).
\eal
Therefore $X_m(\alpha, \beta) \xrightarrow{D} N(0, \sigma_2^2)$ as $\alpha, \beta \to \infty$ such that $\frac{\alpha}{\beta} \to c$. 
\end{proof} 

\subsection{Joint Convergence} \label{SecJointConvergence}

Let 
\bal
Z(\alpha, \beta) &= \sqrt{\beta} \left(\sum_{k=1}^\infty \frac{\gamma_k(\alpha, \beta)}{\beta} - 1 \right)\\
\hat{Z}(\alpha, \beta) &= \sqrt{\beta} \sum_{k=1}^{\lfloor \alpha^2 \rfloor} \left( \frac{\gamma_k(\alpha, \beta)}{\beta} - V_k \right).
\eal
We will need the following joint convergence result. The proof is similar to the characteristic function approach of Lemma \ref{Level2CLTHDP}. 

\begin{lemma} \label{JointCLT}
We have the following multivariate central limit theorem
\bal
(Z(\alpha, \beta), X_m(\alpha, \beta)) \xrightarrow{D} N(\bm{0}, \bm{\Sigma})
\eal
as $\alpha,\beta \to \infty$ such that $\frac{\alpha}{\beta} \to c$, where the covariance matrix is given by
\bal
\bm{\Sigma} = \begin{pmatrix} 1 & m \\ m & \sigma_2^2 \end{pmatrix} 
\eal
and $\sigma_2^2$ is the level 2 variance from Lemma~\ref{Level2CLTHDP}.
\end{lemma}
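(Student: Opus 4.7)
The plan is to invoke the Cram\'er--Wold device: for every $(s,t) \in \R^2$ we show
\bal
s Z(\alpha,\beta) + t X_m(\alpha,\beta) \xrightarrow{D} N\bigl(0,\, s^2 + 2stm + t^2 \sigma_2^2 \bigr),
\eal
which identifies the joint limit as $N(\bm{0}, \bm{\Sigma})$ with $\bm{\Sigma}$ as stated. The marginal limit for $Z(\alpha,\beta) = \sqrt{\beta}\bigl(\gamma(\beta)/\beta - 1 \bigr)$ is the classical CLT for gamma random variables; the marginal limit for $X_m(\alpha,\beta)$ was already established in Lemma \ref{Level2CLTHDP}, so the only new content is the cross-covariance.

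First, truncate: define $\hat{Z}(\alpha,\beta)$ as in the statement and note that, since $\sum_k V_k = 1$, we have $Z(\alpha,\beta) = \sqrt{\beta}\sum_{k=1}^\infty(\gamma_k(\alpha,\beta)/\beta - V_k)$. Conditioning on $\bm{V}$ and using $\Var(\gamma_k(\alpha,\beta)/\beta \mid \bm{V}) = V_k/\beta$ gives
\bal
\Var\bigl(Z(\alpha,\beta) - \hat{Z}(\alpha,\beta)\bigr) = \E\Bigl[ \textstyle\sum_{k > \lfloor \alpha^2 \rfloor} V_k \Bigr] = \bigl(\alpha/(1+\alpha)\bigr)^{\lfloor \alpha^2 \rfloor} \to 0,
\eal
so $Z(\alpha,\beta) - \hat{Z}(\alpha,\beta) \to 0$ in $L^2$. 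Similarly, the difference $X_m(\alpha,\beta) - X_{m1}(\alpha,\beta) = X_{m2}(\alpha,\beta)$ was shown to vanish in probability in Lemma \ref{Level2CLTHDP}. Hence it suffices to work with the pair $(\hat{Z}(\alpha,\beta),\, X_{m1}(\alpha,\beta))$, whose $k$-th summands are conditionally independent given $\bm{V}$.

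Next, proceed exactly as in the proof of Lemma \ref{Level2CLTHDP}. Condition on $\bm{V}$ and expand the joint conditional characteristic function
\bal
\E\bigl[\exp(i s \hat{Z}(\alpha,\beta) + i t X_{m1}(\alpha,\beta)) \,\bigl|\, \bm{V}\bigr] = \prod_{k=1}^{\lfloor \alpha^2 \rfloor} \bigl(1 + c_{\alpha,\beta,k}(s,t)\bigr).
\eal
A second-order Taylor expansion of each factor, using the conditional moments
\bal
\Var(\gamma_k(\alpha,\beta) \mid \bm{V}) &= \beta V_k, \\
\Var(\gamma_k^m(\alpha,\beta) \mid \bm{V}) &= \frac{\Gamma(\beta V_k + 2m)}{\Gamma(\beta V_k)} - \left(\frac{\Gamma(\beta V_k + m)}{\Gamma(\beta V_k)}\right)^2, \\
\cov(\gamma_k(\alpha,\beta), \gamma_k^m(\alpha,\beta) \mid \bm{V}) &= m \cdot \frac{\Gamma(\beta V_k + m)}{\Gamma(\beta V_k)},
\eal
(the last obtained from $\E[\gamma_k^{m+1} \mid \bm V] = (\beta V_k + m)\Gamma(\beta V_k+m)/\Gamma(\beta V_k)$), together with the third-moment bound from Lemma \ref{Gammamoments}, yields
\bal
c_{\alpha,\beta,k}(s,t) = -\tfrac{1}{2}\bigl(s^2 V_k + 2st\, \Delta_k + t^2\, \Sigma_k\bigr) + R_k(s,t),
\eal
where $\Sigma_k$ and the error $R_k$ are the terms already handled in Lemma \ref{Level2CLTHDP}, and the new cross term is
\bal
\Delta_k = \frac{m}{f(\beta;m,c)\, \beta^m}\, \frac{\Gamma(\beta V_k + m)}{\Gamma(\beta V_k)}.
\eal

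Finally, summing $\Delta_k$ over $k \leq \lfloor \alpha^2 \rfloor$ yields
\bal
\sum_{k=1}^{\lfloor \alpha^2 \rfloor} \Delta_k = \frac{m}{f(\beta;m,c)\, \beta^m} \sum_{j=1}^m {m \brack j} \beta^j \sum_{k=1}^{\lfloor \alpha^2 \rfloor} V_k^j \xrightarrow{\text{a.s.}} m,
\eal
by the truncated law of large numbers in Lemma \ref{as-LLN}, exactly recovering the definition of $f(\beta;m,c)$. Combined with the earlier computations, this gives
\bal
\sum_{k=1}^{\lfloor \alpha^2 \rfloor} c_{\alpha,\beta,k}(s,t) \xrightarrow{\text{a.s.}} -\tfrac{1}{2}(s^2 + 2stm + t^2 \sigma_2^2).
\eal
A Borel--Cantelli argument parallel to the one in Lemma \ref{Level2CLTHDP} (applied now to $\max_k V_k^j$ for the relevant exponents $j$) ensures $\max_k |c_{\alpha,\beta,k}(s,t)| \to 0$ and boundedness of $\sum_k |c_{\alpha,\beta,k}|$, so Lemma \ref{expconv} delivers the pointwise a.s.\ limit of the conditional characteristic function, and bounded convergence removes the conditioning. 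The main technical obstacle is the careful bookkeeping of the new cross term $\Delta_k$ in the three conditions required by Lemma \ref{expconv}, but the arguments are direct analogues of those already established in Section \ref{SecCLTLevel2}.
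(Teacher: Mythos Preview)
Your proposal is correct and follows essentially the same route as the paper: truncate to $(\hat{Z}(\alpha,\beta), X_{m1}(\alpha,\beta))$, condition on $\bm{V}$, expand the joint conditional characteristic function to second order, compute the cross term $\cov(\gamma_k,\gamma_k^m \mid \bm{V}) = m\,\Gamma(\beta V_k+m)/\Gamma(\beta V_k)$, and sum using Lemma~\ref{as-LLN} and Lemma~\ref{expconv}. Your explicit $L^2$ bound for $Z-\hat{Z}$ is a slight elaboration over the paper, but otherwise the argument matches.
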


\begin{proof} It is sufficient to show the joint convergence of $(\hat{Z}(\alpha, \beta), X_{m1}(\alpha, \beta))$, where $X_{m1}(\alpha, \beta)$ is the truncated level 2 contribution defined in the proof of Lemma \ref{Level2CLTHDP}. 

Let $(r,s) \in \R^2$. We compute the characteristic function of $(r,s) \cdot (\hat{Z}(\alpha, \beta), X_{m1}(\alpha, \beta)) = r \hat{Z}(\alpha, \beta, t) + s X_{m1}(\alpha, \beta)$. Conditioning on $\bm{V}$, we can write the characteristic function as
\bal
&\mathbb{E}\left( \exp\left( ir \hat{Z}(\alpha,\beta)+ is X_{m1}(\alpha,\beta)\right)\right) \\
&= \mathbb{E}\left[\mathbb{E}\left( \exp\left( ir \sqrt{\beta} \sum_{k=1}^{\lfloor \alpha^2 \rfloor} \left( \frac{\gamma_k(\alpha, \beta)}{\beta} - V_k \right) \right. \right. \right. \\
&\qquad \qquad \left. \left. \left. + is \frac{\sqrt{\beta}}{f(\beta; m,c)} \sum_{k=1}^{\lfloor \alpha^2 \rfloor} \frac{\gamma^m_k(\alpha, \beta) - \frac{\Gamma(\beta V_k + m)}{\Gamma(\beta V_k)}}{\beta^m} \right) \middle\vert \bm{V} \right) \right] \\
&= \mathbb{E}\left[ \prod_{k=1}^{\lfloor \alpha^2 \rfloor} \mathbb{E}\left( \exp\left( ir \sqrt{\beta} \left( \frac{\gamma_k(\alpha, \beta)}{\beta} - V_k  \right) + is \frac{\sqrt{\beta}}{f(\beta; m,c)} \frac{\gamma^m_k(\alpha, \beta) - \frac{\Gamma(\beta V_k + m)}{\Gamma(\beta V_k)}}{\beta^m} \right) \middle\vert \bm{V} \right) \right]. 
\eal

The inner conditional characteristic function is computed to be
\bal
& \mathbb{E}\left( \sum_{\ell = 0}^\infty \frac{(i\sqrt{\beta})^\ell}{\ell!} \left[ r \left( \frac{\gamma_k(\alpha, \beta)}{\beta} - V_k \right) + \frac{s}{f(\beta; m,c)} \frac{\gamma^m_k(\alpha, \beta) - \frac{\Gamma(\beta V_k + m)}{\Gamma(\beta V_k)}}{\beta^m} \right]^\ell \middle\vert \bm{V} \right) \\
&= \sum_{\ell = 0}^\infty \frac{(i\sqrt{\beta})^\ell}{\ell!} \mathbb{E}\left( \left[ r \left( \frac{\gamma_k(\alpha, \beta)}{\beta} - V_k \right) + \frac{s}{f(\beta; m,c)} \frac{\gamma^m_k(\alpha, \beta) - \frac{\Gamma(\beta V_k + m)}{\Gamma(\beta V_k)}}{\beta^m} \right]^\ell \middle\vert \bm{V} \right) \\
&= 1 - \frac{\beta}{2} \mathbb{E}\left(\left[ r \left( \frac{\gamma_k(\alpha, \beta)}{\beta} - V_k \right) + \frac{s}{f(\beta; m,c)} \frac{\gamma^m_k(\alpha, \beta) - \frac{\Gamma(\beta V_k + m)}{\Gamma(\beta V_k)}}{\beta^m} \right]^2 \middle\vert \bm{V} \right) + R(r,s),
\eal
where the error term $R(r,s) \to 0$ as $\alpha, \beta \to \infty$. 

Expanding the second moment gives
\bal
&\mathbb{E}\left(\left[ r \left( \frac{\gamma_k(\alpha, \beta)}{\beta} - V_k \right) + \frac{s}{f(\beta; m,c)} \frac{\gamma^m_k(\alpha, \beta) - \frac{\Gamma(\beta V_k + m)}{\Gamma(\beta V_k)}}{\beta^m} \right]^2 \middle\vert \bm{V} \right) \\
&= r^2 \mathbb{E}\left( \left( \frac{\gamma_k(\alpha, \beta )}{\beta} - V_k \right)^2 \middle\vert \bm{V} \right) + \frac{s^2}{f(\beta; m,c)^2} \mathbb{E}\left( \left( \frac{\gamma^m_k(\alpha, \beta) - \frac{\Gamma(\beta V_k + m)}{\Gamma(\beta V_k)}}{\beta^m} \right)^2 \middle\vert \bm{V} \right) \\
&\qquad + \frac{2rs}{f(\beta; m,c)} \mathbb{E}\left(\left( \frac{\gamma_k(\alpha, \beta)}{\beta} - V_k \right)\left( \frac{\gamma^m_k(\alpha, \beta) - \frac{\Gamma(\beta V_k + m)}{\Gamma(\beta V_k)}}{\beta^m} \right) \middle\vert \bm{V} \right). 
\eal
The expectations in the first and second summands follow from Lemma \ref{Gammamoments},
\bal
\mathbb{E}\left( \left( \frac{\gamma_k(\alpha, \beta)}{\beta} - V_k \right)^2 \middle\vert \bm{V} \right) &= \beta^{-2}\left[ \frac{\Gamma(\beta V_k + 2)}{\Gamma(\beta V_k)} - \left( \frac{\Gamma(\beta V_k + 1)}{\Gamma(\beta V_k)} \right)^2 \right] = \frac{V_k}{\beta} \\
\mathbb{E}\left( \left( \frac{\gamma^m_k(\alpha, \beta) - \frac{\Gamma(\beta V_k + m)}{\Gamma(\beta V_k)}}{\beta^m} \right)^2 \middle\vert \bm{V} \right) &= \beta^{-2m} \left[ \frac{\Gamma(\beta V_k + 2m)}{\Gamma(\beta V_k)} - \left( \frac{\Gamma(\beta V_k + m)}{\Gamma(\beta V_k)} \right)^2 \right].
\eal 
The expectation in the third (cross) summand can be expanded as
\bal
&\mathbb{E}\left(\left( \frac{\gamma_k(\alpha, \beta)}{\beta} - V_k \right)\left( \frac{\gamma^m_k(\alpha, \beta) - \frac{\Gamma(\beta V_k + m)}{\Gamma(\beta V_k)}}{\beta^m} \right) \middle\vert \bm{V} \right) \\
&= \frac{1}{\beta^{m+1}} \E\left[\left( \gamma_k(\alpha, \beta) - \mathbb{E}[\gamma_k(\alpha, \beta) \mid \bm{V}]  \right)\left( \gamma^m_k(\alpha, \beta) - \mathbb{E}[\gamma^m_k(\alpha, \beta) \mid \bm{V}]  \right) \vert \bm{V} \right] \\
&= \frac{\mathbb{E}[\gamma^{m+1}_k(\alpha, \beta) \mid \bm{V}] - \mathbb{E}[\gamma_k(\alpha, \beta) \mid \bm{V}] \mathbb{E}[\gamma^m_k(\alpha, \beta) \mid \bm{V}]}{\beta^{m+1}} \\ 
&= \frac{\Gamma(\beta V_k + m + 1) - \beta V_k \Gamma(\beta V_k + m)}{\beta^{m+1} \Gamma(\beta V_k)} \\
&= \frac{m \Gamma(\beta V_k + m)}{\beta^{m+1} \Gamma(\beta V_k)}.
\eal
Thus the conditional characteristic function is
\bal
&1 - \frac{\beta}{2} \mathbb{E}\left(\left[ r \left( \frac{\gamma_k(\alpha, \beta)}{\beta} - V_k \right) + \frac{s}{f(\beta; m,c)} \frac{\gamma^m_k(\alpha, \beta) - \frac{\Gamma(\beta V_k + m)}{\Gamma(\beta V_k)}}{\beta^m} \right]^2 \middle\vert \bm{V} \right) + R(r,s) \\
&= 1 - \frac{1}{2} \left\{ r^2V_k + \frac{2rs m\Gamma(\beta V_k + m)}{f(\beta; m,c) \beta^m \Gamma(\beta V_k)} \right. \\
& \qquad \left. + \frac{s^2}{\beta \left( \sum_{j=1}^m {m \brack j} \frac{\Gamma(j)}{c^{j-1}} \right)^2}  \left[ \frac{\Gamma(\beta V_k + 2m)}{\Gamma(\beta V_k)} - \left( \frac{\Gamma(\beta V_k + m)}{\Gamma(\beta V_k)} \right)^2 \right]  \right\} + R(r,s). 
\eal
Recall that with probability one, we have that
\bal
&\sum_{k=1}^\infty V_k = 1, \\ 
&\frac{\alpha^{j-1}}{\Gamma(j)} \sum_{k=1}^\infty V_k^j \to 1,\\ 
&\frac{ 1 }{\beta \left( \sum_{j=1}^m {m \brack j} \frac{\Gamma(j)}{c^{j-1}} \right)^2} \sum_{k=1}^\infty \left[ \frac{\Gamma(\beta V_k + 2m)}{\Gamma(\beta V_k)} - \left( \frac{\Gamma(\beta V_k + m)}{\Gamma(\beta V_k)} \right)^2 \right] \to \sigma_2^2,
\eal 
and observe that
\bal
\frac{1}{f(\beta; m,c)} \sum_{k=1}^\infty \frac{\Gamma(\beta V_k + m)}{\beta^m \Gamma(\beta V_k)} &= \frac{1}{f(\beta; m,c) \beta^{m-1}} \sum_{j=1}^m {m \brack j} \beta^{j-1} \sum_{k=1}^\infty V_k^j \\
&= \frac{1}{\sum_{j=1}^m {m \brack j} \frac{\Gamma(j)}{c^{j-1}}} \sum_{j=1}^m {m \brack j} \beta^{j-1} \frac{\Gamma(j)}{\alpha^{j-1}} \left( \frac{\alpha^{j-1}}{\Gamma(j)} \sum_{k=1}^\infty V_k^j \right) \\
&\to 1
\eal
almost surely as $\alpha, \beta \to \infty$ such that $\frac{\alpha}{\beta} \to c$, where $\sigma_2^2$ is the level 2 variance from Lemma \ref{Level2CLTHDP}. 

Therefore by Lemma \ref{expconv}, the product of conditional characteristic functions converges as
\bal
& \prod_{k=1}^{\lfloor \alpha^2\rfloor} \left\{ 1 - \frac{1}{2} \left[ r^2V_k + \frac{s^2}{\beta \left( \sum_{j=1}^m {m \brack j} \frac{\Gamma(j)}{c^{j-1}} \right)^2}  \left[ \frac{\Gamma(\beta V_k + 2m)}{\Gamma(\beta V_k)} - \left( \frac{\Gamma(\beta V_k + m)}{\Gamma(\beta V_k)} \right)^2 \right] \right. \right. \\
&\qquad \qquad \left. \left. + \frac{2rs m\Gamma(\beta V_k + m)}{f(\beta; m,c) \beta^m \Gamma(\beta V_k)} \right] + R(r,s) \right\} \\
&\to \exp\left( -\frac{1}{^2} \left( r^2 + s^2 \sigma_2^2+ 2rsm \right) \right).
\eal
It follows that 
\bal
&\mathbb{E}\left( \exp\left( ir \sqrt{\beta} \sum_{k=1}^{\lfloor \alpha^2 \rfloor} \left( \frac{\gamma_k(\alpha, \beta)}{\beta} - V_k \right) + is \frac{\beta^{m-1/2}}{\sum_{j=1}^m {m \brack j} \frac{\Gamma(j)}{c^{j-1}}} \sum_{k=1}^{\lfloor \alpha^2 \rfloor} \frac{\gamma^m_k(\alpha, \beta) - \frac{\Gamma(\beta V_k + m)}{\Gamma(\beta V_k)}}{\beta^m} \right) \right) \\
&\to \exp\left( -\frac{1}{^2} \left( r^2 + s^2 \sigma_2^2 + 2rsm \right) \right)
\eal
as $\alpha, \beta \to \infty$ such that $\frac{\alpha}{\beta} \to c$, which implies the result.  
\end{proof}

\subsection{Proof of the CLT for the Homozygosity of the HDP} \label{SecProofofCLTHDP}

We now prove the central limit theorem for the homozygosity of the hierarchical Dirichlet process. 

\begin{proof}[Proof of Theorem \ref{CLTHomozygosityHDP}]
Recall that for $m \geq 2$, we can decompose $\tilde{H}(\alpha, \beta)$ as 
\bal
\tilde{H}_m(\alpha,\beta) &= X_m(\alpha, \beta) + Y_m(\alpha, \beta) \\
&+ \sqrt{\beta} \left( \left( \frac{\beta}{\gamma(\beta)} \right)^m - 1 \right) \frac{\sum_{k=1}^\infty\frac{\gamma_k^m(\alpha, \beta)}{\beta^m}}{f(\beta; m,c)}.
\eal
By Lemmas \ref{Level1CLTHDP} and \ref{Level2CLTHDP},
\bal
X_m(\alpha, \beta) &\xrightarrow{D} X \sim N(0,\sigma_2^2) \\
Y_m(\alpha, \beta) &\xrightarrow{D} Y \sim N\left(0, \sigma_1^2 \right)
\eal
as $\alpha, \beta \to \infty$ such that $\frac{\alpha}{\beta} \to c$. By the law of large numbers, 
\bal
\frac{\sum_{k=1}^\infty\frac{\gamma_k^m(\alpha, \beta)}{\beta^m}}{f(\beta; m,c)} = \frac{\sum_{k=1}^\infty \frac{\gamma^m_k(\alpha, \beta)}{\beta^m}}{\frac{1}{\beta^{m-1}}\sum_{j=1}^m {m \brack j} \frac{\Gamma(j)}{c^{j-1}}} \xrightarrow{D} 1
\eal
as $\alpha, \beta \to \infty$ such that $\frac{\alpha}{\beta} \to c$. Next, by Lemma \ref{Fen7.7}, 
\bal
Z(\alpha, \beta) = \sqrt{\beta} \left( \sum_{k=1}^\infty \frac{\gamma_k(\alpha, \beta)}{\beta} - 1 \right) = \sqrt{\beta} \left( \left( \frac{\gamma(\beta)}{\beta} \right) - 1 \right) \xrightarrow{D} Z \sim N(0,1)
\eal
as $\beta \to \infty$, so that
\bal
\sqrt{\beta}\left( \left( \frac{\beta}{\gamma(\beta)} \right)^m - 1 \right) =  \sqrt{\beta} \left( \left( \frac{\gamma(\beta)}{\beta} \right)^{-m} - 1 \right) \xrightarrow{D} -mZ.
\eal
Thus by Slutsky's theorem,
\bal
&\sqrt{\beta} \left( \left( \frac{\beta}{\gamma(\beta)} \right)^m - 1 \right) \frac{\sum_{k=1}^\infty\frac{\gamma_k^m(\alpha, \beta)}{\beta^m}}{f(\beta; m,c)} \xrightarrow{D} -mZ \\
\eal
as $\alpha, \beta \to \infty$ such that $\frac{\alpha}{\beta} \to c$. 

By Lemma \ref{JointCLT}, $(Z(\alpha, \beta), X_m(\alpha, \beta)) \xrightarrow{D} N(\bm{0}, \bm{\Sigma})$, where
\bal
\bm{\Sigma} = \begin{pmatrix} 1 & m \\ m & \sigma_2^2 \end{pmatrix}. 
\eal
Finally $Y_m(\alpha, \beta)$ is independent of $Z(\alpha, \beta)$, and $(X_m(\alpha, \beta), Y_m(\alpha, \beta)) \xrightarrow{D} N\left(\bm{0}, \begin{pmatrix} \sigma_2^2 & 0 \\ 0 & \sigma_1^2 \end{pmatrix} \right)$ by a similar argument as the proof of Lemma \ref{JointCLT}. 

Therefore for all $m \geq 2$, the the scaled homozygosity converges in distribution as
\bal
\tilde{H}_m(\alpha, \beta) \xrightarrow{D} X + Y - mZ
\eal
as $\alpha, \beta \to \infty$ such that $\frac{\alpha}{\beta} \to c$, which is distributed as a normal random variable, $N(0,\sigma_c^2)$, with variance
\bal
\sigma_c^2 &= \Var(X) + \Var(Y) + m^2\Var(Z) - 2m\cov(X,Z) \\
&= \sigma_1^2 + \sigma_2^2 - m^2 \\
&= \frac{\sum_{1 \leq i,j \leq m}  {m \brack i}{m \brack j} \frac{\Gamma(i+j) - \Gamma(i+1)\Gamma(j+1)}{c^{i+j-1}}}{\left( \sum_{j=1}^m {m \brack j} \frac{\Gamma(j)}{c^{j-1}} \right)^2}\\
&\qquad + \frac{ \sum_{j=1}^{2m} {2m \brack j} \frac{\Gamma(j)}{c^{j-1}} - \sum_{1 \leq i,j \leq m} {m \brack i}{m \brack j} \frac{\Gamma(i+j)}{c^{i+j-1}}}{\left( \sum_{j=1}^m {m \brack j} \frac{\Gamma(j)}{c^{j-1}} \right)^2} - m^2 \\
&= \frac{\sum_{j=1}^{2m} {2m \brack j} \frac{\Gamma(j)}{c^{j-1}} - \sum_{1 \leq i,j \leq m}  {m \brack i}{m \brack j} \frac{\Gamma(i+1)\Gamma(j+1)}{c^{i+j-1}}}{\left( \sum_{j=1}^m {m \brack j} \frac{\Gamma(j)}{c^{j-1}} \right)^2} - m^2.  \qedhere
\eal 
\end{proof}

\section{Central Limit Theorem for the Homozygosity of the FDHDP} \label{CLTHomozygFDHDP}

Recall that in the FDHDP, the level 1 random measure becomes
\bal
\Xi^n_{\alpha, \nu}= \sum_{k=1}^n W_{nk} \delta_{\xi_k} = \sum_{k=1}^n \frac{\gamma_k}{\gamma} \delta_{\xi_k},
\eal
where $(W_{n1},\ldots, W_{nn}) \sim \Dir\left( \frac{\alpha}{n}, \ldots, \frac{\alpha}{n} \right)$, and independently, $\{\xi_k\} $ are independent and identically distributed with common distribution $\nu$. The FDHDP is the random measure 
\bal
\Xi^n_{\alpha,\beta, \nu} =\sum_{i=1}^n Z_{ni}\delta_{\xi_i},
\eal
where $\bm{Z}_n = (Z_{n1}, \ldots, Z_{nn}) \sim \Dir(\beta W_{n1}, \ldots, \beta W_{nn})$. The corresponding $m$th order homozygosity is
\bal
H_{m, n}(\alpha,\beta)= \sum_{i=1}^n Z_{ni}^m 
\eal
for $m \geq 2$.

In this section we prove Theorem \ref{CLTHomozygosityFDHDP}, the central limit theorem for the homozygosity of the FDHDP. The differences in the asymptotic behaviors between the HDP and the FDHDP are due to the different asymptotic behaviors of $\Xi_{\alpha,\nu}$ and $\Xi^n_{\alpha,\nu}$ for large $\alpha$ and $n$. For fixed $d$, this is seen from the following diagram:

 $$\label{dia1}
 \begin{tikzcd}[row sep=14pt,column sep={18mm,between origins}]
&\Xi^n_{\alpha,\nu}  \arrow[dddd,"\begin{matrix}\alpha\vspace{-1mm}\\ \vspace{-1mm}\big\downarrow \\\vspace{-2mm} \infty\end{matrix}"] \arrow[ddddrrr, "\alpha\sim d n\rightarrow \infty"]  \arrow[rrr,"n\rightarrow\infty"] & &  &\ \ \Xi_{\alpha,\nu}\arrow[dddd,"\begin{matrix}\alpha\vspace{-1mm}\\ \vspace{-1mm}\big\downarrow \\ \vspace{-2mm} \infty\end{matrix}"] & \\
 &  & &  &  & \\
    &  & &  &  & \\[4mm]
 &  & &  &  & \\
    & \mbox{empirical law} \arrow[rrr,"n\rightarrow \infty"] & &  &\ \ \ \nu& \\
\end{tikzcd}
$$

In the central limit theorem for the FDHDP, the level 1 asymptotic corresponds to the convergence along the diagonal. Since several parts of the proof are similar to the HDP case, we will simply list our results, and only include proofs which are significantly different.

\subsection{Gamma Representation, Law of Large Numbers, and Decomposition for the FDHDP}

Let  
\bal
\gamma_k(\alpha,\beta, n) = \gamma\left(\sum_{i=1}^k W_{ni} \right) - \gamma\left(\sum_{i=1}^{k-1}W_{ni}\right)
\eal
for all $k=1, 2, \ldots, n$. Then we have that
\bal
Z_{nk} \stackrel{D}{=} \frac{\gamma_k(\alpha,\beta, n)}{\gamma(\beta)}
\eal
for all $k=1, 2, \ldots, n$. 

The following lemma gives the expectation of the homozygosity for the FDHDP. 

\begin{lemma} \label{weightmeans}
For any $m \geq 2$,
\bal
\mathbb{E}[Z_{nk}^m] = \frac{1}{(\beta)_{(m)}} \sum_{j=1}^m {m \brack j} \beta^{j} \frac{\left( \frac{\alpha}{n} \right)_{(j)}}{(\alpha)_{(j)}}
\eal
and 
\bal
\mathbb{E}[H_{m,n}(\alpha,\beta)] = \frac{1}{(\beta)_{(m)}} \sum_{j=1}^m {m \brack j} \beta^{j} \frac{\left( \frac{\alpha}{n} + 1 \right)_{(j-1)}}{(\alpha + 1)_{(j-1)}}.
\eal
\end{lemma}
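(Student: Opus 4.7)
The plan is to mirror the proofs of Lemmas \ref{Zmoments} and \ref{homozygositymean}, conditioning on the level-one weights $(W_{n1}, \ldots, W_{nn})$ and using the Gamma-Dirichlet algebra, with the simplification that in the finite-dimensional setting, exchangeability of the $Z_{nk}$'s replaces the geometric-series argument that was needed for the infinite HDP.

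First I would establish the expression for $\mathbb{E}[Z_{nk}^m]$. Conditionally on $(W_{n1}, \ldots, W_{nn})$, the coordinate $Z_{nk}$ is marginally $\Beta(\beta W_{nk}, \beta(1 - W_{nk}))$, so the rising-factorial identity for Beta moments gives
\begin{align*}
\mathbb{E}[Z_{nk}^m \mid W_{n1}, \ldots, W_{nn}] = \frac{(\beta W_{nk})_{(m)}}{(\beta)_{(m)}} = \frac{1}{(\beta)_{(m)}} \sum_{j=1}^m {m \brack j} \beta^j W_{nk}^j.
\end{align*}
Since the Dirichlet marginal satisfies $W_{nk} \sim \Beta(\alpha/n, \alpha - \alpha/n)$, we have $\mathbb{E}[W_{nk}^j] = (\alpha/n)_{(j)} / (\alpha)_{(j)}$. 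Taking unconditional expectations then yields the first formula.

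For the second formula, I would exploit exchangeability: the components $Z_{n1}, \ldots, Z_{nn}$ are exchangeable since the parameter vector $(\beta W_{n1}, \ldots, \beta W_{nn})$ is exchangeable. Thus
\begin{align*}
\mathbb{E}[H_{m,n}(\alpha, \beta)] = n\, \mathbb{E}[Z_{n1}^m] = \frac{n}{(\beta)_{(m)}} \sum_{j=1}^m {m \brack j} \beta^{j} \frac{(\alpha/n)_{(j)}}{(\alpha)_{(j)}}.
\end{align*}
The remaining step is the algebraic simplification using $n(\alpha/n)_{(j)} = n \cdot (\alpha/n) (\alpha/n + 1)_{(j-1)} = \alpha (\alpha/n + 1)_{(j-1)}$ and $(\alpha)_{(j)} = \alpha (\alpha+1)_{(j-1)}$, so that $n \cdot (\alpha/n)_{(j)} / (\alpha)_{(j)} = (\alpha/n + 1)_{(j-1)} / (\alpha+1)_{(j-1)}$, which delivers the stated form.

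There is no substantive obstacle here; the computation is a direct finite-dimensional analogue of the infinite HDP case. The only thing to be slightly careful about is the bookkeeping of the rising factorials when converting from the $Z_{nk}$-moment expression to the $H_{m,n}$-expectation expression, ensuring the factor of $n$ is absorbed cleanly into the shifted rising factorial.
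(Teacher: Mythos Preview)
Your proposal is correct and follows exactly the approach the paper intends: the paper omits the proof of this lemma, noting that it parallels Lemmas~\ref{Zmoments} and~\ref{homozygositymean}, and your argument---conditioning on $(W_{n1},\ldots,W_{nn})$, expanding via Stirling numbers, using the Beta marginals of the symmetric Dirichlet, and replacing the geometric-series summation by exchangeability---is precisely that parallel. The algebraic simplification of the rising factorials is handled cleanly and there are no gaps.
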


Observe that 
\bal
\mathbb{E}[H_{m,n}(\alpha,\beta)] \asymp \frac{1}{\beta^{m-1}} \sum_{j=1}^m {m \brack j} \frac{\beta^{j-1} \left( \frac{\alpha}{n} + 1 \right)_{(j-1)}}{(\alpha+1)_{(j-1)}} \sim \frac{1}{\beta^{m-1}} \sum_{j=1}^m {m \brack j} \frac{(d+1)_{(j-1)}}{c^{j-1}}.
\eal
For notational simplicity, let
\bal
f(\beta; m,c,d) = \frac{1}{\beta^{m-1}} \sum_{j=1}^m {m \brack j} \frac{(d+1)_{(j-1)}}{c^{j-1}}.
\eal
We have the following law of large numbers for $H_{m,n}(\alpha,\beta)$. 

\begin{proposition}\label{LLNFDHDP}
The homozygosity of the FDHDP satisfies
\bal
\frac{H_{m,n}(\alpha,\beta)}{f(\beta;m,c,d)} \to 1
\eal
in probability as $\alpha, \beta, n \to \infty$ such that $\frac{\alpha}{\beta} \to c$ and $\frac{\alpha}{n} \to d$. 
\end{proposition}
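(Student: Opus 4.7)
The plan is to mirror the proof of Proposition \ref{LLN} (the LLN for the single-group HDP), adapting each step so that the finite-dimensional Dirichlet structure on the level-one weights $(W_{n1},\ldots,W_{nn})$ replaces the role played by the GEM weights $\bm{V}$, and then tracking the new limiting constant $d = \lim \alpha/n$. By Chebyshev's inequality, combined with the fact that $\mathbb{E}[H_{m,n}(\alpha,\beta)] \sim f(\beta;m,c,d)$ established just before the proposition, it suffices to show $\mathrm{Var}(H_{m,n}(\alpha,\beta))/f(\beta;m,c,d)^2 \to 0$, which since $f(\beta;m,c,d)^2 \asymp \beta^{-2(m-1)}$ reduces to proving $\mathrm{Var}(H_{m,n}(\alpha,\beta)) = o(\beta^{-2(m-1)})$.

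The next step is to use the gamma representation $Z_{nk} \stackrel{D}{=} \gamma_k(\alpha,\beta,n)/\gamma(\beta)$ together with the gamma-Dirichlet algebra to decompose
\begin{align*}
\mathbb{E}[H_{m,n}^2(\alpha,\beta)] &= \mathbb{E}[H_{2m,n}(\alpha,\beta)] + \mathbb{E}\bigl[(\mathbb{E}[H_{m,n}(\alpha,\beta)\mid \bm{W}_n])^2\bigr] \\
&\quad - \mathbb{E}\Bigl[\sum_{i=1}^n (\mathbb{E}[Z_{ni}^m \mid \bm{W}_n])^2\Bigr],
\end{align*}
exactly as in the HDP calculation. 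Lemma \ref{weightmeans} gives $\mathbb{E}[H_{2m,n}(\alpha,\beta)] \asymp \beta^{-(2m-1)}$, which is negligible compared to $f(\beta;m,c,d)^2$. For the third term, we use $\mathbb{E}[Z_{ni}^m\mid \bm{W}_n] = (\beta)_{(m)}^{-1}\sum_{j=1}^m {m \brack j}\beta^j W_{ni}^j$ and the Dirichlet moments $\mathbb{E}[W_{ni}^{j+\ell}] = (\alpha/n)_{(j+\ell)}/(\alpha)_{(j+\ell)}$, which give another bound of order $\beta^{-(2m-1)}$ after summing over the $n$ coordinates.

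The crucial piece is the middle term. Writing
\begin{align*}
\mathbb{E}\bigl[(\mathbb{E}[H_{m,n}(\alpha,\beta)\mid \bm{W}_n])^2\bigr] = \frac{1}{(\beta)_{(m)}^2}\sum_{i,j=1}^m {m\brack i}{m\brack j}\beta^{i+j}\,\mathbb{E}\bigl[H_i(\bm{W}_n)H_j(\bm{W}_n)\bigr],
\end{align*}
one expands $\mathbb{E}[H_i(\bm{W}_n)H_j(\bm{W}_n)] = n(\alpha/n)_{(i+j)}/(\alpha)_{(i+j)} + n(n-1)(\alpha/n)_{(i)}(\alpha/n)_{(j)}/(\alpha)_{(i+j)}$ using the joint moments of the symmetric Dirichlet. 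Under the joint limits $\alpha/\beta \to c$ and $\alpha/n\to d$, the first term contributes $O(\beta^{-(2m-1)})$ while the second yields the asymptotic $(\beta)_{(m)}^{-2}\sum_{i,j}{m\brack i}{m\brack j}\beta^{i+j}(d+1)_{(i-1)}(d+1)_{(j-1)}/c^{i+j}$ times $\alpha^2/n^2 \cdot d^2$-type factors that combine to give precisely $f(\beta;m,c,d)^2$. Since $(\mathbb{E}[H_{m,n}(\alpha,\beta)])^2 \sim f(\beta;m,c,d)^2$ as well, this leading contribution cancels in the variance, leaving $\mathrm{Var}(H_{m,n}(\alpha,\beta)) = O(\beta^{-(2m-1)}) = o(f(\beta;m,c,d)^2)$, and Chebyshev completes the proof.

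The main obstacle will be the careful bookkeeping in the middle term: one must track the rising factorial $(\alpha/n)_{(j)} \sim (\alpha/n)^j (1 + (j-1)n/(2\alpha) + \cdots)$ together with $(\alpha)_{(i+j)}\sim \alpha^{i+j}$ and recognize that $(\alpha/n)_{(j)}/(\alpha/n)^j \to (d+1)_{(j-1)}/d^{j-1} \cdot (1/j)$-style constants in a way that reassembles exactly into the coefficients appearing in $f(\beta;m,c,d)$. The remaining computations are direct but tedious applications of Pochhammer asymptotics.
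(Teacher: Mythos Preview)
Your proposal is correct and follows exactly the route the paper intends: the paper does not write out a proof of Proposition~\ref{LLNFDHDP} at all, but explicitly says that the FDHDP results are obtained by mirroring the HDP case, which is precisely what you do---Chebyshev, the same three-term decomposition of $\mathbb{E}[H_{m,n}^2]$ via the gamma--Dirichlet algebra, and the same order estimates on each piece.

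One small correction to your closing paragraph: since $\alpha/n \to d \in (0,\infty)$ is a fixed constant, you should not expand $(\alpha/n)_{(j)}$ as an asymptotic series in $n/\alpha$; rather $(\alpha/n)_{(j)} \to (d)_{(j)} = d\,(d+1)_{(j-1)}$ directly, and this is all you need to see the off-diagonal Dirichlet contribution $n(n-1)(\alpha/n)_{(i)}(\alpha/n)_{(j)}/(\alpha)_{(i+j)} \sim (d+1)_{(i-1)}(d+1)_{(j-1)}/\alpha^{i+j-2}$ reassemble into $f(\beta;m,c,d)^2$. The bookkeeping is simpler than you suggest.
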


Next we give a decomposition for the scaled homozygosity $\tilde{H}_{m,n}(\alpha, \beta)$. Let 
\bal
X_m(\alpha, \beta, n) &= \frac{\sqrt{\beta}}{f(\beta; m,c,d)} \sum_{k=1}^n \frac{\gamma^m_k(\alpha, \beta, n) - \frac{\Gamma(\beta W_{nk} + m)}{\Gamma(\beta W_{nk})}}{\beta^m}, \\
Y_m(\alpha, \beta, n) &= \frac{\sqrt{\beta}}{f(\beta; m,c,d)} \left( \sum_{k=1}^n \frac{\Gamma(\beta W_{nk} + m)}{\beta^m \Gamma(\beta W_{nk})} - \frac{1}{\beta^{m-1}} \sum_{j=1}^m {m \brack j} \frac{\beta^{j-1} \left( \frac{\alpha}{n} + 1 \right)_{(j-1)}}{(\alpha+1)_{(j-1)}} \right),
\eal
and observe that for all $m \geq 2$, 
\bal
\tilde{H}_{m,n}(\alpha,\beta) &= X_m(\alpha,\beta, n) + Y_m(\alpha, \beta, n) + \sqrt{\beta} \left( \left( \frac{\beta}{\gamma(\beta)} \right)^m - 1 \right) \frac{\sum_{k=1}^\infty\frac{\gamma_k^m(\alpha, \beta, n)}{\beta^m}}{f(\beta; m,c, d)}.
\eal
We again refer to $X_m(\alpha,\beta,n)$ as the {\em level 2 contribution} and $Y_m(\alpha,\beta,n)$ as the {\em level 1 contribution}. 

\subsection{Level 2 Contribution for the FDHDP}

In this section, we give the central limit theorem for the level 2 contribution, $X_m(\alpha, \beta, n)$. The proof follows similarly as in the full HDP case using characteristic functions, and so we omit the details. 

\begin{lemma} \label{Level2CLTFDHDP}
The level 2 contribution satisfies
\bal
X_m(\alpha, \beta, n) = \frac{\sqrt{\beta}}{f(\beta; m,c,d)} \sum_{k=1}^n \frac{\gamma^m_k(\alpha, \beta, n) - \frac{\Gamma(\beta W_{nk} + m)}{\Gamma(\beta W_{nk})}}{\beta^m} \xrightarrow{D} N\left(0,v^2_2(c,d)\right)
\eal
as $\alpha, \beta, n \to \infty$ such that $\frac{\alpha}{\beta} \to c$ and $\frac{\alpha}{n} \to d$, where the variance is given by
\bal
v^2_2(c,d) = \frac{\sum_{j=1}^{2m} {2m \brack j} \frac{(d+1)_{(j-1)}}{c^{j-1}} - \sum_{1 \leq i,j \leq m} {m \brack i}{m \brack j} \frac{(d+1)_{(i+j-1)}}{c^{i+j-1}}}{\left( \sum_{j=1}^m {m \brack j} \frac{(d+1)_{(j-1)}}{c^{j-1}} \right)^2}.
\eal 
\end{lemma}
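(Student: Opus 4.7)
The plan is to mirror the characteristic function approach of Lemma \ref{Level2CLTHDP} by conditioning on the Dirichlet weights $\bm{W}_n = (W_{n1}, \ldots, W_{nn})$ in place of the GEM weights $\bm{V}$. Given $\bm{W}_n$, the gamma increments $\gamma_k(\alpha, \beta, n)$ are conditionally independent $\Gam(\beta W_{nk}, 1)$ random variables, so the conditional characteristic function of $X_m(\alpha, \beta, n)$ factors as a product over $k = 1, \ldots, n$. I would first split $X_m(\alpha, \beta, n) = X_{m1} + X_{m2}$ into a bulk piece over a suitable index set (say $k \leq \lfloor \alpha^2 \rfloor$ when $n > \lfloor \alpha^2 \rfloor$, or the full sum otherwise) and a tail $X_{m2}$ handled by a direct Chebyshev bound using $\mathbb{E}[\Gamma(\beta W_{nk} + m)/\Gamma(\beta W_{nk})]$ computed from Lemma \ref{weightmeans}.

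Next I would expand each conditional characteristic factor to second order via Lemma \ref{Gammamoments}, obtaining $1 + c_{\alpha, \beta, n, k}$ with
\begin{align*}
c_{\alpha, \beta, n, k} = -\frac{t^2}{2\beta f(\beta; m,c,d)^2} \left[ \frac{\Gamma(\beta W_{nk} + 2m)}{\Gamma(\beta W_{nk})} - \left( \frac{\Gamma(\beta W_{nk} + m)}{\Gamma(\beta W_{nk})} \right)^2 \right] + O\!\left( \frac{t^3 W_{nk}^{3m-2}}{\sqrt{\beta}\, f(\beta; m,c,d)^3} \right).
\end{align*}
Expanding the gamma ratios through the rising-factorial identity $\Gamma(x+r)/\Gamma(x) = \sum_j {r \brack j} x^j$ reduces the sum over $k$ to power sums $\sum_{k=1}^n W_{nk}^j$ for $1 \leq j \leq 2m$. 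Lemma \ref{expconv} then converts the product of factors into $\exp\bigl( -\tfrac{t^2}{2} v_2^2(c,d) \bigr)$, provided we identify the almost-sure limits of these power sums and verify the negligibility condition $\max_k |c_{\alpha, \beta, n, k}| \to 0$.

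The key new ingredient, replacing Lemma \ref{as-LLN}, is an almost-sure law of large numbers for $\sum_{k=1}^n W_{nk}^j$. Exchangeability and the symmetric Dirichlet moment formulas give
\begin{align*}
\mathbb{E}\left[ \sum_{k=1}^n W_{nk}^j \right] = n \frac{(\alpha/n)_{(j)}}{(\alpha)_{(j)}} \sim \frac{(d+1)_{(j-1)}}{\alpha^{j-1}},
\end{align*}
and the variance can be bounded using the explicit joint Dirichlet moment formula for $\mathbb{E}[W_{n1}^j W_{n2}^j]$; I expect decay of order $\alpha^{-(2j-1)}$, which is fast enough to apply Borel-Cantelli along a geometric subsequence $\alpha_k = \lfloor a^k \rfloor$ and then interpolate via the gamma representation of $\bm{W}_n$, exactly as in the proof of Lemma \ref{as-LLN}. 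This yields $\beta^{j-1} \sum_{k=1}^n W_{nk}^j \to (d+1)_{(j-1)}/c^{j-1}$ almost surely under the stated limit, and the control of $\max_k |c_{\alpha, \beta, n, k}|$ proceeds as in Lemma \ref{Level2CLTHDP} via Markov's inequality applied to $\mathbb{E}[W_{nk}^N]$ for sufficiently large $N$.

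Plugging these limits into $\sum_{k=1}^n c_{\alpha, \beta, n, k}$ produces exactly $-\tfrac{t^2}{2} v_2^2(c,d)$, and dominated convergence passes the limit through the outer expectation over $\bm{W}_n$ to conclude. The main obstacle I anticipate is the Dirichlet variance estimate: unlike the GEM case, where $V_k$ has a transparent iid stick-breaking representation, the symmetric Dirichlet forces one to work with joint moments, and one must ensure the variance decay is uniform along the linked limit $\alpha, \beta, n \to \infty$ with $\alpha/\beta \to c$ and $\alpha/n \to d$ so that a single Borel-Cantelli argument suffices. A secondary technical point is verifying that the three asymptotically relevant moments $\mathbb{E}[\sum_k W_{nk}^{2m-1}]$, $\mathbb{E}[\sum_k W_{nk}^{2m-2}]$, and $\mathbb{E}[\sum_k W_{nk}^{3m-2}]$ all concentrate fast enough to kill the maximum $c_{\alpha, \beta, n, k}$ simultaneously.
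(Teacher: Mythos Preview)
Your proposal is correct and follows exactly the approach the paper indicates: the paper omits the proof entirely, stating only that it ``follows similarly as in the full HDP case using characteristic functions,'' and you have accurately identified the required modifications---conditioning on $\bm{W}_n$ in place of $\bm{V}$, the second-order expansion via Lemma~\ref{Gammamoments}, and the need for an almost-sure law of large numbers for the symmetric Dirichlet power sums $\sum_{k=1}^n W_{nk}^j$ replacing Lemma~\ref{as-LLN}. One simplification you can exploit: since $n \sim \alpha/d$, the sum over $k=1,\ldots,n$ is already finite with $n \ll \alpha^2$ eventually, so the truncation step $X_{m1}+X_{m2}$ from the HDP proof is unnecessary here and you may work directly with the full sum.
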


\subsection{Level 1 Contribution for the FDHDP}

In this section we prove the central limit theorem for the level 1 contribution, $Y_m(\alpha, \beta, n)$. First observe that we can rewrite the level 1 contribution as
\bal
Y_m(\alpha,\beta,n) &= \frac{\sqrt{\beta}}{f(\beta; m,c,d)} \left( \sum_{k=1}^n \frac{\Gamma(\beta W_{nk} + m)}{\beta^m \Gamma(\beta W_{nk})} - \frac{1}{\beta^{m-1}} \sum_{j=1}^m {m \brack j} \frac{\beta^{j-1} \left( \frac{\alpha}{n} + 1 \right)_{(j-1)}}{(\alpha+1)_{(j-1)}} \right)  \\
&= \frac{1}{\sum_{j=1}^m {m \brack j} \frac{(d+1)_{(j-1)}}{c^{j-1}}} \sum_{j=2}^m {m \brack j} \left( \frac{\beta}{n} \right)^{j - \frac{1}{2}} n^{j-\frac{1}{2}} \sum_{k=1}^n \left( W_{nk}^j - \mathbb{E}\left(W_{nk}^j\right) \right). 
\eal
Then the level 1 contribution satisfies a central limit theorem if we can show that the vector
\bal
\left( n^{3/2} \sum_{k=1}^n (W_{nk}^2 - \mathbb{E}(W_{nk}^2)),  \ldots, n^{m-\frac{1}{2}} \sum_{k=1}^n (W_{nk}^m - \mathbb{E}(W_{nk}^m)) \right)
\eal
converges in distribution to a multivariate normal. 

Recall that we can write $\sum_{k=1}^n W_{nk}^j = \frac{\sum_{k=1}^n Y_k^j}{\left( \sum_{k=1}^n Y_k \right)^j}$, where $\{Y_k\}$ are iid $\Gam\left( \frac{\alpha}{n}, 1 \right)$ random variables. We start with the following multivariate central limit theorem. 

\begin{lemma}\label{jointgammaclt}
Let $\{Y_k\}$ be iid $\Gam\left( \frac{\alpha}{n}, 1 \right)$ random variables. Then 
\bal
\left( \frac{1}{\sqrt{n}} \sum_{k=1}^n (Y_k - \mathbb{E}(Y_1)), \ldots, \frac{1}{\sqrt{n}} \sum_{k=1}^n (Y_k^m - \mathbb{E}(Y_1^m)) \right) \xrightarrow{D} N(\bm{0}, \bm{\Sigma})
\eal
as $\alpha, n \to \infty$ such that $\frac{\alpha}{n} \to d$, where the covariance matrix is given by
\bal
\bm{\Sigma} = \left( (d)_{(i+j)} - (d)_{(i)}(d)_{(j)} \right)_{1 \leq i,j \leq m}. 
\eal
\end{lemma}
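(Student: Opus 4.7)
The plan is to apply the Cramér--Wold device to reduce the multivariate statement to a one-dimensional CLT for an arbitrary linear combination, and then to verify that CLT via the Lyapunov condition for triangular arrays. The point that makes this go through smoothly is that the shape parameter $\alpha/n$ of the $Y_k$'s varies with $n$ but converges to the positive constant $d$, so all moment quantities eventually live in a compact subset of $(0,\infty)$.

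Fix $\bm{a}=(a_1,\ldots,a_m)\in\R^m$ and set $X_{n,k} = \sum_{j=1}^m a_j\bigl(Y_k^j - \E(Y_k^j)\bigr)$. For each fixed $n$ the $X_{n,k}$ are iid, centered, with finite moments of every order because each $Y_k$ is gamma. Writing $S_n = \frac{1}{\sqrt{n}}\sum_{k=1}^n X_{n,k}$, the linear combination in question is exactly $\bm{a}\cdot\bigl(\tfrac{1}{\sqrt{n}}\sum_k (Y_k - \E Y_1), \ldots, \tfrac{1}{\sqrt{n}}\sum_k(Y_k^m - \E Y_1^m)\bigr) = S_n$. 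Using $\E[Y_1^j] = (\alpha/n)_{(j)}$, the variance computation is
\bal
\Var(S_n) \;=\; \Var(X_{n,1}) \;=\; \sum_{1\le i,j\le m} a_i a_j \bigl[(\alpha/n)_{(i+j)} - (\alpha/n)_{(i)}(\alpha/n)_{(j)}\bigr] \;\longrightarrow\; \bm{a}^{\top}\bm{\Sigma}\bm{a}
\eal
as $\alpha,n\to\infty$ with $\alpha/n\to d$, with $\bm{\Sigma}$ the matrix in the statement.

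To finish the univariate CLT I would verify the Lyapunov condition with $\delta=1$. Since $\alpha/n\to d>0$, eventually $\alpha/n\in[d/2,2d]$, and the polynomial moments of the $\Gam(s,1)$ law, namely $(s)_{(p)}$, depend continuously on $s$. Hence each $\E[Y_1^p]$ is uniformly bounded in $n$, and so there is a constant $C=C(\bm{a},m,d)<\infty$ with $\E|X_{n,1}|^3\le C$ for all large $n$. Therefore
\bal
\frac{1}{n^{3/2}}\sum_{k=1}^n \E|X_{n,k}|^3 \;=\; \frac{\E|X_{n,1}|^3}{\sqrt{n}} \;\longrightarrow\; 0,
\eal
and the Lyapunov CLT yields $S_n \xrightarrow{D} N(0,\bm{a}^{\top}\bm{\Sigma}\bm{a})$. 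Since $\bm{a}$ was arbitrary, the Cramér--Wold theorem delivers the claimed joint convergence to $N(\bm{0},\bm{\Sigma})$.

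There is no real obstacle: the only subtle point is the triangular-array nature of the problem, since the distribution of each $Y_k$ depends on $n$. This is handled cleanly by the observation that $\alpha/n$ stabilizes in a compact subinterval of $(0,\infty)$, which renders all moment bounds uniform in $n$ and obviates any need for truncation or tightness arguments.
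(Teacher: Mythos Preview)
Your proof is correct. Both you and the paper reduce to a one-dimensional statement via Cram\'er--Wold (the paper phrases this as computing the joint characteristic function at an arbitrary $\bm{t}\in\R^m$, which is the same move), but the executions diverge after that: you invoke the Lyapunov CLT for triangular arrays as a ready-made theorem, whereas the paper carries out the characteristic-function expansion by hand, Taylor-expanding the single-summand characteristic function to second order with a remainder $R(t)$ and showing the $n$-th power converges to $\exp\!\bigl(-\tfrac12\,\bm{t}^{\top}\bm{\Sigma}\,\bm{t}\bigr)$. Your route is more economical, since the observation that $\alpha/n$ eventually lies in $[d/2,2d]$ gives a uniform third-moment bound and dispatches the Lyapunov condition in one line; the paper's direct computation is more self-contained and keeps this lemma in the same style as the other characteristic-function arguments used throughout. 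One small point worth making explicit: the Lyapunov CLT in its usual form normalizes by $s_n$ rather than $\sqrt{n}$, so strictly speaking you are also using that $\Var(X_{n,1})\to\bm{a}^{\top}\bm{\Sigma}\,\bm{a}>0$; positivity holds here because $\bm{\Sigma}$ is the covariance matrix of $(Y,Y^2,\ldots,Y^m)$ for a continuous $Y$, and in the degenerate case $\bm{a}^{\top}\bm{\Sigma}\,\bm{a}=0$ one would simply note $\Var(S_n)\to 0$ and conclude $S_n\to 0$ directly.
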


\begin{proof}
Let $\bm{t} = (t_1, \ldots, t_m)$. Then 
\bal
&\mathbb{E}\left( \exp\left( i \bm{t} \cdot \left( \frac{1}{\sqrt{n}} \sum_{k=1}^n (Y_k - \mathbb{E}(Y_1)), \ldots, \frac{1}{\sqrt{n}} \sum_{k=1}^n (Y_k^m - \mathbb{E}(Y_1^m)) \right) \right)\right) \\
&=  \mathbb{E}\left[ \exp\left( \frac{i}{\sqrt{n}} \sum_{k=1}^n \left[ t_1(Y_k - \mathbb{E}(Y_1)) + t_2 (Y_k^2 - \mathbb{E}(Y_1^2)) + \dotsb + t_m (Y_k^m - \mathbb{E}(Y_1^m)) \right] \right) \right] \\
&= \left(\mathbb{E}\left[ \exp\left( \frac{i}{\sqrt{n}}\left[ t_1(Y_k - \mathbb{E}(Y_1)) + t_2 (Y_k^2 - \mathbb{E}(Y_1^2)) + \dotsb + t_m (Y_k^m - \mathbb{E}(Y_1^m)) \right] \right) \right] \right)^n,
\eal
where the last equality follows by the independence of the $Y_k$'s. 

The inner expectation is computed to be
\bal
&\mathbb{E}\left(\exp\left( \frac{i}{\sqrt{n}}\left[ t_1(Y_k - \mathbb{E}(Y_1)) + t_2 (Y_k^2 - \mathbb{E}(Y_1^2)) + \dotsb + t_m (Y_k^m - \mathbb{E}(Y_1^m)) \right] \right) \right) \\ 
&= \sum_{\ell \geq 0} \frac{\left(\frac{i}{\sqrt{n}} \right)^\ell}{\ell!} \mathbb{E}\left( \left[t_1(Y_k - \mathbb{E}(Y_1)) + t_2 (Y_k^2 - \mathbb{E}(Y_1^2)) + \dotsb + t_m (Y_k^m - \mathbb{E}(Y_1^m))\right]^\ell \right) \\
&= 1 - \frac{1}{2n} \mathbb{E}\left( \left[t_1(Y_k - \mathbb{E}(Y_1)) + t_2 (Y_k^2 - \mathbb{E}(Y_1^2)) + \dotsb + t_m (Y_k^m - \mathbb{E}(Y_1^m))\right]^2 \right) + R(t),
\eal
where the remainder term $R(t)$ goes to $0$ as $\alpha, n \to \infty$. 

The expected value of the $\ell = 2$ term can be written as
\bal
&\mathbb{E}\left( \left[t_1(Y_k - \mathbb{E}(Y_1)) + t_2 (Y_k^2 - \mathbb{E}(Y_1^2)) + \dotsb + t_m (Y_k^m - \mathbb{E}(Y_1^m))\right]^2 \right) \\
&= \sum_{1 \leq i,j \leq m} t_i t_j \mathbb{E}[(Y_k^i - \mathbb{E}(Y_1^i))(Y_k^j - \mathbb{E}(Y_1^j))] \\
&= \bm{t} \cdot \bm{\Sigma} \cdot \bm{t}^T,
\eal
where the covariance matrix is given by
\bal
\bm{\Sigma_n} = \left( \bm{\Sigma_{ij}} \right)_{1 \leq i,j \leq m} &= \left( \mathbb{E}[(Y_k^i - \mathbb{E}(Y_1^i))(Y_k^j - \mathbb{E}(Y_1^j))] \right)_{1 \leq i,j \leq m} \\
&= \left( \left(\frac{\alpha}{n}\right)_{(i+j)} - \left(\frac{\alpha}{n}\right)_{(i)}\left(\frac{\alpha}{n}\right)_{(j)} \right)_{1 \leq i,j \leq m}. 
\eal

Therefore 
\bal
&\left(\mathbb{E}\left[ \exp\left( \frac{i}{\sqrt{n}}\left[ t_1(Y_k - \mathbb{E}(Y_1)) + t_2 (Y_k^2 - \mathbb{E}(Y_1^2)) + \dotsb + t_m (Y_k^m - \mathbb{E}(Y_1^m)) \right] \right) \right] \right)^n \\
&= \left(1 - \frac{1}{2n} \bm{t} \cdot \bm{\Sigma_n} \cdot \bm{t}^T + R(t)\right)^n \\
&\to \exp\left( -\frac{1}{2} \bm{t} \cdot \bm{\Sigma} \cdot \bm{t}^T \right)
\eal
as $\alpha,n \to \infty$, which is the characteristic function of a multivariate normal with mean $0$ and covariance matrix 
\bal
\bm{\Sigma} &= \left( (d)_{(i+j)} - (d)_{(i)}(d)_{(j)} \right)_{1 \leq i,j \leq m}. \qedhere
\eal
\end{proof}

Combining the previous lemma with the multivariate delta method, we obtain the following multivariate central limit theorem. 

\begin{lemma}\label{jointdirichletclt}
Let $(W_{n1},\ldots, W_{nn}) \sim \Dir\left( \frac{\alpha}{n}, \ldots, \frac{\alpha}{n} \right)$. Then
\bal
\left( n^{3/2} \sum_{k=1}^n (W_{nk}^2 - \mathbb{E}(W_{nk}^2)),\ldots, n^{m-\frac{1}{2}} \sum_{k=1}^n (W_{nk}^m - \mathbb{E}(W_{nk}^m)) \right) \xrightarrow{D} N(\bm{0}, \bm{\Sigma^*})
\eal
as $\alpha, n \to \infty$ such that $\frac{\alpha}{n} \to d$, where the covariance matrix $\bm{\Sigma^*} = (\bm{\Sigma_{ij}^*})_{1 \leq i,j \leq m-1}$ is given by
\bal
\bm{\Sigma_{ij}^*} &= \frac{(d+1)_{(i+j+1)} - (d+1)_{(i)}(d+1)_{(j)}[(i+1)(j+1) - d]}{d^{i+j-1}}. 
\eal
\end{lemma}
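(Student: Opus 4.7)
The natural approach is to combine the Dirichlet--gamma representation with Lemma~\ref{jointgammaclt} and a multivariate delta method. Write $(W_{n1},\ldots,W_{nn}) \stackrel{D}{=}(Y_1/T,\ldots,Y_n/T)$, where $Y_k \sim \Gam(\alpha/n,1)$ are iid and $T=\sum_k Y_k=nS_1$ with $S_j:=n^{-1}\sum_{k=1}^n Y_k^j$. Defining the smooth functional $g_j(x_1,\ldots,x_m):=x_j/x_1^j$, one has the identity $\sum_{k=1}^n W_{nk}^j \stackrel{D}{=} n^{1-j}g_j(\bar{S})$, and hence
\bal
n^{j-1/2}\sum_{k=1}^n \bigl(W_{nk}^j-\mathbb{E}[W_{nk}^j]\bigr) \stackrel{D}{=} \sqrt{n}\,\bigl(g_j(\bar{S})-c_j^{(n)}\bigr),
\eal
where $c_j^{(n)}:=n^{j-1}\mathbb{E}[\sum_k W_{nk}^j]=n^j(\alpha/n)_{(j)}/(\alpha)_{(j)}$ by Lemma~\ref{weightmeans}. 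This reduces the joint CLT for the $W$-polynomials to a joint CLT for the smooth statistics $(g_2(\bar S),\ldots,g_m(\bar S))$.

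The next step invokes Lemma~\ref{jointgammaclt}, which gives $\sqrt{n}(\bar S-\bm{\mu}^{(n)})\xrightarrow{D} N(\bm{0},\bm{\Sigma})$ with $\mu_j^{(n)}=(\alpha/n)_{(j)}\to(d)_{(j)}=:\mu_j$ and $\bm{\Sigma}$ as in that lemma. Since $\mu_1=d>0$, each $g_j$ is $C^{1}$ in a neighbourhood of $\bm{\mu}=(d,(d)_{(2)},\ldots,(d)_{(m)})$; the multivariate delta method (which applies despite the mild $n$-dependence of $\bm{\mu}^{(n)}$, as the partials of $g_j$ are continuous at $\bm{\mu}$) then yields
\bal
\sqrt{n}\bigl(g_j(\bar S)-g_j(\bm{\mu}^{(n)})\bigr)_{j=2}^m \xrightarrow{D} N(\bm{0},\bm{J}\bm{\Sigma}\bm{J}^{\top}),
\eal
where $\bm{J}$ is the Jacobian of $(g_2,\ldots,g_m)$ at $\bm{\mu}$.

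A routine Taylor expansion in $1/n$ shows $c_j^{(n)}-g_j(\bm{\mu}^{(n)})=O(1/n)$, since both quantities equal $(d)_{(j)}/d^j+O(1/n)$. Hence $\sqrt{n}\bigl(c_j^{(n)}-g_j(\bm{\mu}^{(n)})\bigr)\to 0$, and Slutsky's theorem permits replacing the centering $g_j(\bm{\mu}^{(n)})$ by the exact mean $c_j^{(n)}$ without changing the limit law.

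The remaining work is to identify $\bm{J}\bm{\Sigma}\bm{J}^{\top}$ with $\bm{\Sigma}^*$. Each row of $\bm{J}$ has only two nonzero entries, namely $\partial_1 g_j(\bm{\mu})=-j(d)_{(j)}/d^{j+1}$ and $\partial_j g_j(\bm{\mu})=1/d^j$, so every entry of $\bm{J}\bm{\Sigma}\bm{J}^{\top}$ collapses to a four-term combination of $\Sigma_{11}=d$, $\Sigma_{1j}=(d)_{(j+1)}-d(d)_{(j)}=j(d)_{(j)}$, and $\Sigma_{ij}=(d)_{(i+j)}-(d)_{(i)}(d)_{(j)}$. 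Applying the identity $(d)_{(k)}=d(d+1)_{(k-1)}$ and re-indexing the $(i,j)$-entry so that lemma indices $i,j\in\{1,\ldots,m-1\}$ correspond to exponents $i+1,j+1$ then gives $\bm{\Sigma}^*_{ij}$. I expect the principal obstacle to be this last algebraic bookkeeping; the probabilistic content is carried entirely by Lemma~\ref{jointgammaclt} and the delta method.
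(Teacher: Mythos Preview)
Your proposal is correct and follows essentially the same route as the paper: gamma representation $W_{nk}=Y_k/T$, the joint CLT for $(\sum_k Y_k^j)_{j}$ from Lemma~\ref{jointgammaclt}, and the multivariate delta method applied to the map $(x_1,\ldots,x_m)\mapsto (x_j/x_1^j)_{j\ge 2}$, followed by the Jacobian computation to identify $\bm{\Sigma}^*$. The only cosmetic difference is that the paper builds the $n$-dependence into $g$ and applies the delta method to the raw sums, absorbing the centering mismatch into an $O(1/\sqrt{n})$ remainder, whereas you normalise to sample means $\bar S$ and handle the same mismatch via $c_j^{(n)}-g_j(\bm{\mu}^{(n)})=O(1/n)$ and Slutsky; both are valid and amount to the same argument.
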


\begin{proof}
Define the vector function $g:\R^m \to \R^{m-1}$ by
\bal
g(x_1, \ldots, x_m) = (g_1(x_1, \ldots, x_m), \ldots, g_{m-1}(x_1, \ldots, x_m)) = \left( \frac{n^2 x_2}{x_1^2}, \frac{n^3 x_3}{x_1^3}, \ldots, \frac{n^m x_m}{x_1^m} \right).
\eal
The gradient of $g$ is the $m \times (m-1)$ matrix given by
\bal
\nabla g(x_1, \ldots, x_m) = \begin{pmatrix} -\frac{2n^2 x_2}{x_1^3} & -\frac{3n^3 x_3}{x_1^4} & \hdots & -\frac{m n^m x_m}{x_1^{m+1}}  \\
\frac{n^2}{x_1^2} & 0 & \hdots & 0 \\
0 & \frac{n^3}{x_1^3} & \hdots & 0 \\
\vdots & \vdots & \ddots & \vdots \\
0 & 0 & \hdots & \frac{n^m}{x_1^m} \\
\end{pmatrix}
\eal
By Lemma \ref{jointgammaclt},
\bal
\frac{1}{\sqrt{n}} \left[ \begin{pmatrix} \sum_{k=1}^n Y_k \\ \sum_{k=1}^n Y_k^2 \\ \vdots \\ \sum_{k=1}^n Y_k^m \end{pmatrix} - \begin{pmatrix} \mathbb{E}\left(\sum_{k=1}^n Y_k\right) \\ \mathbb{E}\left(\sum_{k=1}^n Y_k^2 \right) \\ \vdots \\ \mathbb{E}\left(\sum_{k=1}^n Y_k^m \right) \end{pmatrix} \right] \xrightarrow{D} N(\bm{0}, \bm{\Sigma}),
\eal
where $\bm{\Sigma} = \left( (d)_{(i+j)} - (d)_{(i)}(d)_{(j)} \right)_{1 \leq i,j \leq m}$. 
We can write
\bal
&\begin{pmatrix} n^{3/2} \sum_{k=1}^n (W_{nk}^2 - \mathbb{E}(W_{nk}^2)) \\ n^{5/2} \sum_{k=1}^n (W_{nk}^3 - \mathbb{E}(W_{nk}^3)) \\ \vdots \\ n^{m - 1/2} \sum_{k=1}^n (W_{nk}^m - \mathbb{E}(W_{nk}^m)) \end{pmatrix} \\
&\qquad = \frac{1}{\sqrt{n}} \left[ g\begin{pmatrix} \sum_{k=1}^n Y_k \\ \sum_{k=1}^n Y_k^2 \\ \vdots \\ \sum_{k=1}^n Y_k^m \end{pmatrix} - g \begin{pmatrix} \mathbb{E}\left(\sum_{k=1}^n Y_k\right) \\ \mathbb{E}\left(\sum_{k=1}^n Y_k^2 \right)\\ \vdots \\ \mathbb{E}\left(\sum_{k=1}^n Y_k^m \right) \end{pmatrix} \right] 
+ O\begin{pmatrix} 1/\sqrt{n} \\ \vdots \\ 1/\sqrt{n} \end{pmatrix},
\eal 
where the vector of constant terms converges to $\bm{0}$ as $n \to \infty$. 

Therefore by the multivariate delta method, 
\bal
\begin{pmatrix} n^{3/2} \sum_{k=1}^n (W_{nk}^2 - \mathbb{E}(W_{nk}^2)) \\ n^{5/2} \sum_{k=1}^n (W_{nk}^3 - \mathbb{E}(W_{nk}^3)) \\ \vdots \\ n^{m - 1/2} \sum_{k=1}^n (W_{nk}^m - \mathbb{E}(W_{nk}^m)) \end{pmatrix} \xrightarrow{D} N(\bm{0}, \bm{\Sigma^*}), 
\eal 
where the $(m-1) \times (m-1)$ covariance matrix is given by $\bm{\Sigma^*} = \lim_{n \to \infty} \bm{\Sigma_n^*}$ and 
\bal
\bm{\Sigma_n^*} &= \left[\nabla g \left( \mathbb{E}\left(\sum_{k=1}^n Y_k\right), \mathbb{E}\left(\sum_{k=1}^n Y_k^2 \right), \ldots, \mathbb{E}\left(\sum_{k=1}^n Y_k^m \right) \right) \right]^T \\
&\qquad \cdot \bm{\Sigma} \cdot \nabla g \left( \mathbb{E}\left(\sum_{k=1}^n Y_k\right), \mathbb{E}\left(\sum_{k=1}^n Y_k^2 \right), \ldots, \mathbb{E}\left(\sum_{k=1}^n Y_k^m \right) \right). 
\eal
Noting that $\mathbb{E}(\sum_{k=1}^n Y_k^j) = n\left( \frac{\alpha}{n} \right)_{(j)} \to n(d)_{(j)}$, we get
\bal
&\nabla g \left( \mathbb{E}\left(\sum_{k=1}^n Y_k\right), \mathbb{E}\left(\sum_{k=1}^n Y_k^2 \right), \ldots, \mathbb{E}\left(\sum_{k=1}^n Y_k^m \right) \right) \\
&\qquad = \begin{pmatrix} -\frac{2\left( \frac{\alpha}{n} + 1 \right)_{(1)}}{\left( \frac{\alpha}{n} \right)^2} & -\frac{3\left( \frac{\alpha}{n} + 1\right)_{(2)}}{\left( \frac{\alpha}{n} \right)^3} & \hdots & -\frac{m\left( \frac{\alpha}{n} + 1 \right)_{(m-1)}}{\left( \frac{\alpha}{n} \right)^m}  \\
\frac{1}{\left( \frac{\alpha}{n} \right)^2} & 0 & \hdots & 0 \\
0 & \frac{1}{\left( \frac{\alpha}{n} \right)^3} & \hdots & 0 \\
\vdots & \vdots & \ddots & \vdots \\
0 & 0 & \hdots & \frac{1}{\left( \frac{\alpha}{n} \right)^m} \\
\end{pmatrix} \\
&\qquad \to \begin{pmatrix} -\frac{2(d+1)_{(1)}}{d^2} & -\frac{3(d+1)_{(2)}}{d^3} & \hdots & -\frac{m(d+1)_{(m-1)}}{d^m}  \\
\frac{1}{d^2} & 0 & \hdots & 0 \\
0 & \frac{1}{d^3} & \hdots & 0 \\
\vdots & \vdots & \ddots & \vdots \\
0 & 0 & \hdots & \frac{1}{d^m} \\
\end{pmatrix}
\eal
as $\alpha, n \to \infty$, so that
\bal
\bm{\Sigma^*} &= \begin{pmatrix} -\frac{2(d+1)_{(1)}}{d^2} & -\frac{3(d+1)_{(2)}}{d^3} & \hdots & -\frac{m(d+1)_{(m-1)}}{d^m}  \\
\frac{1}{d^2} & 0 & \hdots & 0 \\
0 & \frac{1}{d^3} & \hdots & 0 \\
\vdots & \vdots & \ddots & \vdots \\
0 & 0 & \hdots & \frac{1}{d^m} \\
\end{pmatrix}^T \\
&\qquad \cdot \bm{\Sigma} \cdot \begin{pmatrix} -\frac{2(d+1)_{(1)}}{d^2} & -\frac{3(d+1)_{(2)}}{d^3} & \hdots & -\frac{m(d+1)_{(m-1)}}{d^m}  \\
\frac{1}{d^2} & 0 & \hdots & 0 \\
0 & \frac{1}{d^3} & \hdots & 0 \\
\vdots & \vdots & \ddots & \vdots \\
0 & 0 & \hdots & \frac{1}{d^m} \\
\end{pmatrix}.
\eal
By direct computation, the $(i,j)$th entry is 
\bal
\bm{\Sigma_{ij}^*} &= \frac{(d+1)_{(i+j+1)} - (d+1)_{(i)}(d+1)_{(j)}[(i+1)(j+1) + d]}{d^{i+j-1}}. \qedhere
\eal
\end{proof}

By Lemmas \ref{jointgammaclt} and \ref{jointdirichletclt}, we obtain the central limit theorem for the level 1 contribution. 

\begin{lemma} \label{Level1CLTFDHDP}
The level 1 contribution satisfies
\bal
Y_m(\alpha,\beta,n) &= \frac{\sqrt{\beta}}{f(\beta; m,c,d)} \left( \sum_{k=1}^n \frac{\Gamma(\beta W_{nk} + m)}{\beta^m \Gamma(\beta W_{nk})} - \frac{1}{\beta^{m-1}} \sum_{j=1}^m {m \brack j} \frac{\beta^{j-1} \left( \frac{\alpha}{n} + 1 \right)_{(j-1)}}{(\alpha+1)_{(j-1)}} \right) \\
&\xrightarrow{D} N(0,v^2_1(c,d))
\eal
as $\alpha, \beta, n \to \infty$ such that $\frac{\alpha}{\beta} \to c$ and $\frac{\alpha}{n} \to d$, where the variance is given by
\bal
&v^2_1(c,d) = \frac{1}{\left( \sum_{j=1}^m {m \brack j} \frac{(d+1)_{(j-1)}}{c^{j-1}} \right)^2} \\
&\qquad \times \sum_{1 \leq i,j \leq m} {m \brack i} {m \brack j} \frac{(d+1)_{(i+j-1)} - (d+1)_{(i-1)}(d+1)_{(j-1)}(ij+d)}{c^{i+j-1}}.
\eal
\end{lemma}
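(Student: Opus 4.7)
The paper already provides, just above the lemma statement, the decomposition
\begin{align*}
Y_m(\alpha,\beta,n) = \frac{1}{\sum_{j=1}^m{m\brack j}\frac{(d+1)_{(j-1)}}{c^{j-1}}}\sum_{j=2}^m {m\brack j}\left(\frac{\beta}{n}\right)^{j-1/2}n^{j-1/2}\sum_{k=1}^n\bigl(W_{nk}^j - \mathbb{E}[W_{nk}^j]\bigr),
\end{align*}
obtained by expanding $\Gamma(\beta W_{nk}+m)/\Gamma(\beta W_{nk}) = \sum_{j=1}^m {m\brack j}(\beta W_{nk})^j$ and noting that the $j=1$ contribution vanishes since $\sum_{k=1}^n W_{nk}\equiv 1$. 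This exhibits $Y_m(\alpha,\beta,n)$ as a deterministic linear functional of the random vector $\bm{T}_n = \bigl(n^{j-1/2}\sum_{k=1}^n (W_{nk}^j - \mathbb{E}[W_{nk}^j])\bigr)_{j=2}^m$, whose asymptotic joint distribution is exactly the content of Lemma \ref{jointdirichletclt}.

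The plan is then a direct application of Slutsky's theorem. Under the limiting regime the deterministic coefficients $(\beta/n)^{j-1/2}$ converge to $(d/c)^{j-1/2}$, while Lemma \ref{jointdirichletclt} gives $\bm{T}_n \xrightarrow{D} N(\mathbf{0},\bm{\Sigma^*})$; since a linear functional of a multivariate normal is itself univariate normal with the induced variance, this yields $Y_m(\alpha,\beta,n)\xrightarrow{D} N(0,v_1^2(c,d))$ with
\begin{align*}
v_1^2(c,d) = \frac{1}{\bigl(\sum_{j=1}^m{m\brack j}(d+1)_{(j-1)}c^{-(j-1)}\bigr)^2}\sum_{i,j=2}^m {m\brack i}{m\brack j}\left(\frac{d}{c}\right)^{i+j-1}\bm{\Sigma^*}_{i-1,j-1}.
\end{align*}
Substituting the explicit formula for $\bm{\Sigma^*}_{i-1,j-1}$ from Lemma \ref{jointdirichletclt}, the powers of $d$ coming from $(d/c)^{i+j-1}$ cancel those arising in the denominator of $\bm{\Sigma^*}_{i-1,j-1}$, leaving a clean factor $c^{-(i+j-1)}$ multiplying $(d+1)_{(i+j-1)} - (d+1)_{(i-1)}(d+1)_{(j-1)}(ij+d)$.

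To match the exact form stated in the lemma, the summation range must be extended to $1\leq i,j\leq m$. This is justified by the observation that the above bracketed expression vanishes identically when $i=1$ or $j=1$; indeed for $i=1$, $(d+1)_{(j)} - (d+1)_{(j-1)}(j+d) = (d+1)_{(j-1)}[(d+j)-(j+d)]=0$, and symmetrically for $j=1$. The only genuine difficulty in the argument is this final algebraic step: carefully tracking the index shift (the $j$th component of $\bm{T}_n$ corresponds to the $(j-1)$st index of $\bm{\Sigma^*}$) and the powers of $d$ in order to recognize the resulting quadratic form as the claimed closed-form expression. Everything else parallels the proof of Lemma \ref{Level1CLTHDP} in the HDP setting, using Lemma \ref{jointdirichletclt} in place of the Joyce--Krone--Kurtz CLT.
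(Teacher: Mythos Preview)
Your proposal is correct and takes essentially the same approach as the paper, which simply states that the lemma follows from Lemmas \ref{jointgammaclt} and \ref{jointdirichletclt}. You have supplied precisely the details that the paper omits: the Slutsky step for the deterministic coefficients $(\beta/n)^{j-1/2}\to(d/c)^{j-1/2}$, the quadratic-form computation with the index shift $\bm{\Sigma^*}_{i-1,j-1}$, and the verification that the $i=1$ or $j=1$ terms vanish so that the summation range can be extended to $1\le i,j\le m$.
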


\subsection{Joint Convergence in the FDHDP}

Let 
\bal
Z_m(\alpha, \beta, n) = \sqrt{\beta} \left( \sum_{k=1}^n \frac{\gamma_k(\alpha, \beta, n)}{\beta} - 1  \right) = \sqrt{\beta} \sum_{k=1}^n \left( \frac{\gamma_k(\alpha, \beta, n)}{\beta} - W_{nk}  \right).
\eal
We will need the following joint convergence result. 

\begin{lemma} \label{JointConvFDHDP}
We have the following multivariate central limit theorem
\bal
(Z_m(\alpha, \beta, n), X_m(\alpha, \beta, n)) \xrightarrow{D} N(\bm{0}, \bm{\Sigma})
\eal
as $\alpha, \beta, n \to \infty$ such that $\frac{\alpha}{\beta} \to c$ and $\frac{\alpha}{n} \to d$, where the covariance matrix is given by
\bal
\bm{\Sigma} = \begin{pmatrix}
1 & m \\
m & v^2_2(c,d)
\end{pmatrix},
\eal
where $v_2^2(c,d)$ is the level 2 variance from Lemma \ref{Level2CLTFDHDP}. 
\end{lemma}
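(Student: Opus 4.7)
The plan is to follow the blueprint of Lemma \ref{JointCLT} essentially verbatim, with $\bm{W}_n = (W_{n1}, \ldots, W_{nn})$ playing the role that $\bm{V}$ played for the HDP. Conditionally on $\bm{W}_n$, the increments $\gamma_1(\alpha,\beta,n), \ldots, \gamma_n(\alpha,\beta,n)$ are independent $\Gam(\beta W_{nk}, 1)$ random variables, so the joint characteristic function factorizes and I would analyze it by Taylor expansion. Writing
\bal
A_k = \sqrt{\beta}\left( \tfrac{\gamma_k(\alpha,\beta,n)}{\beta} - W_{nk} \right), \qquad B_k = \tfrac{\sqrt{\beta}}{f(\beta;m,c,d)\,\beta^m}\left( \gamma_k^m(\alpha,\beta,n) - \tfrac{\Gamma(\beta W_{nk}+m)}{\Gamma(\beta W_{nk})} \right),
\eal
so that $Z_m(\alpha,\beta,n)=\sum_{k=1}^n A_k$ and $X_m(\alpha,\beta,n)=\sum_{k=1}^n B_k$, I would compute
\bal
\mathbb{E}\bigl( e^{ir Z_m + is X_m} \bigr) = \mathbb{E}\Biggl[\prod_{k=1}^n \mathbb{E}\bigl(e^{i(rA_k + sB_k)}\,\big|\,\bm{W}_n\bigr)\Biggr] = \mathbb{E}\Biggl[\prod_{k=1}^n (1 + c_{n,k})\Biggr],
\eal
where a second-order Taylor expansion (whose remainders are controlled as in the proof of Lemma \ref{Level2CLTHDP}) gives
\bal
c_{n,k} = -\tfrac{1}{2}\mathbb{E}\bigl[(rA_k + sB_k)^2\,\big|\,\bm{W}_n\bigr] + R_{n,k}(r,s).
\eal

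The three contributions to the second moment are directly computable from Lemma \ref{Gammamoments}. The pure $r^2$ term gives $\mathbb{E}[A_k^2\mid\bm{W}_n] = W_{nk}$, whose sum is identically $1$ by $\sum_{k=1}^n W_{nk}=1$. The pure $s^2$ term is exactly the summand whose limit, by the argument already used in the proof of Lemma \ref{Level2CLTFDHDP}, is $v_2^2(c,d)$. For the cross term I would use the identity (established in the proof of Lemma \ref{JointCLT} and carried over verbatim since $\gamma_k \mid \bm{W}_n \sim \Gam(\beta W_{nk},1)$)
\bal
\mathbb{E}\bigl[A_k B_k \,\big|\, \bm{W}_n\bigr] = \frac{m}{f(\beta;m,c,d)}\cdot\frac{\Gamma(\beta W_{nk}+m)}{\beta^m \,\Gamma(\beta W_{nk})},
\eal
and then sum over $k$. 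The sum $\sum_{k=1}^n \Gamma(\beta W_{nk}+m)/(\beta^m \Gamma(\beta W_{nk}))$ is precisely $\mathbb{E}[H_{m,n}(\alpha,\beta)\mid\bm{W}_n]$, and dividing by $f(\beta;m,c,d)$ tends to $1$ in probability by (the proof of) Proposition \ref{LLNFDHDP}, since that law of large numbers argument in fact shows the stronger conditional convergence. Hence $\sum_k \mathbb{E}[A_k B_k\mid \bm{W}_n] \to m$, giving the cross covariance.

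The main obstacle, as in Lemma \ref{Level2CLTHDP}, is verifying the hypotheses of Lemma \ref{expconv}, in particular $\max_{1\le k\le n}|c_{n,k}| \to 0$. In the HDP setting this was handled by a Borel--Cantelli bound on $\max_{1\le k\le \lfloor\alpha^2\rfloor} V_k^{2m-1}$ via size-biasing of the GEM weights. In the FDHDP setting the corresponding statement is about $\max_{1\le k\le n} W_{nk}^{2m-1}$ for symmetric Dirichlet weights with parameter $\alpha/n \to d$, and I would establish it by bounding $P(\max_k W_{nk} > \epsilon)$ through a union bound and high-moment Markov inequality using $\mathbb{E}[W_{nk}^N] = (\alpha/n)_{(N)}/(\alpha)_{(N)}$, which decays like $n^{-N}$ for $N$ large. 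Once uniformity in $(r,s)$ is obtained on compact sets, Lemma \ref{expconv} yields
\bal
\prod_{k=1}^n (1+c_{n,k}) \longrightarrow \exp\bigl( -\tfrac{1}{2}(r^2 + s^2 v_2^2(c,d) + 2rsm) \bigr)
\eal
in probability, and a standard bounded convergence step removes the outer expectation, giving the claimed joint Gaussian limit with the stated covariance $\bm{\Sigma}$.
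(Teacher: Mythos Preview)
Your proposal is correct and follows essentially the same approach as the paper, which simply states that ``the proof uses characteristic functions and is similar to the joint convergence result for the HDP case.'' You have faithfully filled in the details of that adaptation: conditioning on $\bm{W}_n$ in place of $\bm{V}$, the second-order Taylor expansion of the conditional characteristic function, the identification of the three second-moment terms (including the cross term via the same gamma identity), and the verification of the hypotheses of Lemma~\ref{expconv} via a high-moment Markov bound on $\max_k W_{nk}$---the latter being the natural replacement for the Borel--Cantelli argument on the GEM weights, and in fact slightly simpler since the sum is already finite and no truncation is needed.
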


The proof uses characteristic functions and is similar to the joint convergence result for the HDP case.

\subsection{Proof of the CLT for the Homozygosity of the FDHDP}

We are now in a position to prove the central limit theorem for the homozygosity of the FDHDP. 

\begin{proof}[Proof of Theorem \ref{CLTHomozygosityFDHDP}]
Combining Lemmas \ref{Level2CLTFDHDP}, \ref{Level1CLTFDHDP}, \ref{JointConvFDHDP}, \ref{Fen7.7}, Slutsky's theorem, 
the fact that $(X_m(\alpha, \beta, n), Y_m(\alpha, \beta, n)) \xrightarrow{D} N\left( \bm{0}, \begin{pmatrix} v_2^2(c,d) & 0 \\ 0 & v_1^2(c,d) \end{pmatrix} \right)$, and the fact that $Y_m(\alpha, \beta, n)$ is independent of $Z_m(\alpha,\beta,n)$, we have that for any $m \geq 2$,
\bal
\tilde{H}_{m,n}(\alpha, \beta) \xrightarrow{D} X + Y - mZ,
\eal
as $\alpha, \beta, n \to \infty$ such that $\frac{\alpha}{\beta} \to c$ and $\frac{\alpha}{n} \to d$, 
which is distributed as a normal random variable $N(0, v^2(c,d))$, with variance given by
\bal
v^2(c,d) &= \Var(X) + \Var(Y) + m^2 \Var(Z) - 2m\cov(X,Z) \\
&= v_1^2(c,d) + v_2^2(c,d) - m^2 \\
&= \frac{\sum_{j=1}^{2m} {2m \brack j} \frac{(d+1)_{(j-1)}}{c^{j-1}} - \sum_{1 \leq i,j \leq m} {m \brack i}{m \brack j} \frac{(d+1)_{(i-1)}(d+1)_{(j-1)}(ij+d)}{c^{i+j-1}}}{\left( \sum_{j=1}^m {m \brack j} \frac{(d+1)_{(j-1)}}{c^{j-1}} \right)^2} - m^2. \qedhere
\eal 
\end{proof}

\section{CLT for the Homozygosity of the HDP with $L$ Groups} \label{CLTHomozygHDPgroups}

Recall that the homozygosity of the HDP with $L$ groups is defined as
\bal
H^L_m(\alpha,\beta) &= \sum_{i=1}^\infty \sum_{\bm{m} \in M_{m,L}} \frac{1}{L^m} \prod_{k=1}^L Z_{k,i}^{m_k}
\eal
In this section, we prove the central limit theorem for $H^L_{m}(\alpha, \beta)$. The additional challenge in the multigroup setting is to understand the interactions between the different groups. 

\subsection{Gamma Representation, Law of Large Numbers, and Decomposition for the HDP with $L$ Groups}

The HDP weights of each group has subordinator representation
\bal
\bm{Z}_k = (Z_{k,1}, Z_{k,2}, \ldots) = \left( \frac{\gamma_{k1(\alpha, \beta)}}{\gamma_k(\beta)}, \frac{\gamma_{k2(\alpha, \beta)}}{\gamma_k(\beta)}, \ldots \right)
\eal
where $\gamma_{ki}(\alpha, \beta) \sim \Gam(\beta V_i, 1)$ independently of $\gamma_k(\beta) \sim \Gam(\beta, 1)$, for all $1 \leq k \leq L$. We start by computing the expected value of $H^L_{m}(\alpha, \beta)$. 

\begin{lemma} \label{homozygositygroupsmean}
For all $m \geq 2$, 
\bal
\E[H^L_{m}(\alpha, \beta)] &= \frac{1}{L^m} \sum_{\bm{m} \in M_{m,L}} \frac{1}{\prod_{k=1}^L (\beta)_{(m_k)}} \sum_{j = 1}^{m} a_j(\bm{m},L) \beta^j \frac{\Gamma(j)}{(\alpha + 1)_{(j-1)}}, 
\eal
where the coefficients $\{a_j(\bm{m},L)\}_{1 \leq j \leq m}$ are given by 
\bal
a_j(\bm{m},L) = \sum_{\bm{j} \in M_{j,L}} \prod_{k=1}^L {m_k \brack j_k}
\eal 
for all $\bm{m} \in M_{m,L}$. 
\end{lemma}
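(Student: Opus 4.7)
The plan is to compute the expectation by linearity, conditioning on the level one weights $\bm{V}$ to exploit the conditional independence across the $L$ groups. First I would write
\bal
\E[H^L_m(\alpha,\beta)] = \frac{1}{L^m} \sum_{\bm{m} \in M_{m,L}} \sum_{i=1}^\infty \E\!\left[\prod_{k=1}^L Z_{k,i}^{m_k}\right]
\eal
and then tower with conditioning on $\bm{V}$. Since the $L$ groups are conditionally iid given the base Dirichlet process $\Xi_{\alpha,\nu}$, we have $\E\!\left[\prod_{k=1}^L Z_{k,i}^{m_k} \mid \bm{V}\right] = \prod_{k=1}^L \E[Z_{k,i}^{m_k} \mid \bm{V}]$. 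Using the gamma representation $Z_{k,i} \stackrel{D}{=} \gamma_{ki}(\alpha,\beta)/\gamma_k(\beta)$, each conditional factor is a Beta moment, so $\E[Z_{k,i}^{m_k} \mid \bm{V}] = (\beta V_i)_{(m_k)}/(\beta)_{(m_k)}$.

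Next I would expand each rising factorial via Stirling numbers, $(\beta V_i)_{(m_k)} = \sum_{j_k=0}^{m_k} {m_k \brack j_k} (\beta V_i)^{j_k}$, where the $j_k = 0$ term is present only when $m_k = 0$ (since ${m_k \brack 0} = 0$ for $m_k \geq 1$). Multiplying across $k$ and collecting by the total $j = j_1 + \cdots + j_L$ gives
\bal
\prod_{k=1}^L \E[Z_{k,i}^{m_k} \mid \bm{V}] = \frac{1}{\prod_{k=1}^L (\beta)_{(m_k)}} \sum_{j=1}^m \beta^j V_i^j \sum_{\bm{j} \in M_{j,L}} \prod_{k=1}^L {m_k \brack j_k} = \frac{1}{\prod_{k=1}^L (\beta)_{(m_k)}} \sum_{j=1}^m a_j(\bm{m},L) \beta^j V_i^j,
\eal
where the $j=0$ contribution vanishes because $m = \sum_k m_k \geq 2$.

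Finally I would take the expectation over $\bm{V}$ and sum over $i$. The ingredient needed is the level one identity $\sum_{i=1}^\infty \E[V_i^j] = \Gamma(j)/(\alpha+1)_{(j-1)}$, which is already established in the proof of Lemma \ref{homozygositymean} (applying the explicit formula $\E[V_i^j] = \frac{j!}{(\alpha+1)_{(j)}}(\frac{\alpha}{\alpha+j})^{i-1}$ and summing the geometric series). Substituting yields
\bal
\sum_{i=1}^\infty \E\!\left[\prod_{k=1}^L Z_{k,i}^{m_k}\right] = \frac{1}{\prod_{k=1}^L (\beta)_{(m_k)}} \sum_{j=1}^m a_j(\bm{m},L) \beta^j \frac{\Gamma(j)}{(\alpha+1)_{(j-1)}},
\eal
and summing over $\bm{m} \in M_{m,L}$ with the prefactor $1/L^m$ gives the claimed formula.

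The computation itself is routine; the main obstacle is bookkeeping the multi-index combinatorics correctly. Specifically, one must verify that the definition $a_j(\bm{m},L) = \sum_{\bm{j} \in M_{j,L}} \prod_k {m_k \brack j_k}$ (where $M_{j,L}$ allows zero components) correctly absorbs the corner case $m_k = 0$ (contributing ${0 \brack 0} = 1$) while automatically enforcing $j_k \geq 1$ whenever $m_k \geq 1$ via the Stirling number vanishing. Once this is checked, the formula falls out cleanly.
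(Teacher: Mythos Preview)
Your proposal is correct and follows essentially the same route as the paper: condition on $\bm{V}$, use conditional independence across groups together with the Beta moment identity $\E[Z_{k,i}^{m_k}\mid\bm{V}]=(\beta V_i)_{(m_k)}/(\beta)_{(m_k)}$, expand each rising factorial via Stirling numbers, collect by total degree to obtain $a_j(\bm{m},L)$, and finish with $\sum_i \E[V_i^j]=\Gamma(j)/(\alpha+1)_{(j-1)}$. Your explicit treatment of the $m_k=0$ corner case is more careful than the paper's shorthand $\sum_{j=1\wedge m_k}^{m_k}$, but the content is the same.
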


\noindent{\bf Remark:}  Consider the case $m=2$ and $L = 2$. Given that two samples are from one specific group, the within-group co-clustering probability that the samples have the same type is 
\bal
\frac{1}{\beta(\beta+1)}\sum_{j=1}^2 a_j((2,0), 2)\beta^j \frac{\Gamma(j)}{(\alpha+1)_{(j-1)}}&= \frac{\alpha+\beta+1}{(\alpha+1)(\beta+1)}. 
\eal 
Similarly the across-group co-clustering probability that two samples from two different groups have the same type is 
\bal
\frac{1}{\beta^2}\sum_{j=1}^2 a_j((1,1), 2)\beta^j \frac{\Gamma(j)}{(\alpha+1)_{(j-1)}}&= \frac{1}{\alpha+1}. 
\eal 
These are precisely the respective within-group and across-group correlations observed by Ascolani et. al. in \cite{AFLP24}, Example 1.

\begin{proof}
Conditioning on $\bm{V}$, using Lemma \ref{Zmoments}, and some algebra yields
\bal
\E[H^L_{m}(\alpha, \beta) \mid \bm{V}] 
&= \frac{1}{L^m} \sum_{i=1}^\infty \sum_{\bm{m} \in M_{m,L}} \prod_{k=1}^L \left( \frac{1}{(\beta)_{(m_k)} }\sum_{j=1 \wedge m_k}^{m_k} {m_k \brack j} \beta^j V_i^j \right) \\
&= \frac{1}{L^m \prod_{k=1}^L (\beta)_{(m_k)}} \sum_{j = 1}^{m} \left(\sum_{\bm{j} \in M_{j,L}} \prod_{k=1}^L {m_k \brack j_k} \right)  \beta^j \sum_{i=1}^\infty V_i^j \\ &= \frac{1}{L^m \prod_{k=1}^L (\beta)_{(m_k)}} \sum_{j = 1}^{m} a_j(\bm{m},L) \beta^j H_j(\alpha),
\eal
where the coefficients $\{a_j(\bm{m},L)\}_{1 \leq j \leq m}$ are defined as in the lemma statement. 
Thus 
\bal
\E[H^L_{m}(\alpha, \beta)] &= \frac{1}{L^m} \sum_{\bm{m} \in M_{m,L}} \frac{1}{\prod_{k=1}^L (\beta)_{(m_k)}} \sum_{j = 1}^{m} a_j(\bm{m},L) \beta^j \frac{\Gamma(j)}{(\alpha + 1)_{(j-1)}}. \qedhere
\eal
\end{proof} 

Observe that the mean of the homozygosity is asymptotically
\bal
\E[H^L_{m}(\alpha, \beta)] &\asymp \frac{1}{L^m \beta^{m-1}} \sum_{j = 1}^{m} A_j(m,L) \frac{\Gamma(j)}{c^{j-1}} = f(\beta; m,L,c),
\eal
as $\alpha, \beta \to \infty$ such that $\frac{\alpha}{\beta} \to c$, where the coefficients $\{A_j(m,L)\}_{1 \leq j \leq m}$ are defined by
\bal
A_j(m,L) = \sum_{\bm{m} \in M_{m,L}} a_j(\bm{m},L). 
\eal
The next result establishes the law of large numbers for $H^L_{m}(\alpha, \beta)$. The proof follows by a Chebyshev's inequality argument as in the $L = 1$ case, Proposition \ref{LLN}. 

\begin{proposition} \label{LLNDgroups}
The homozygosity satisfies
\bal
\frac{H^L_{m}(\alpha, \beta)}{f(\beta;m,L,c)} \to 1
\eal
in probability as $\alpha, \beta \to \infty$ such that $\frac{\alpha}{\beta} \to c$. 
\end{proposition}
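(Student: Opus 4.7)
The plan is to adapt the Chebyshev argument from the proof of Proposition \ref{LLN} to the $L$-group setting. Since Lemma \ref{homozygositygroupsmean} already identifies the leading order of $\mathbb{E}[H^L_m(\alpha,\beta)]$ as $f(\beta; m, L, c)$, it suffices to show that $\Var(H^L_m)/f(\beta; m, L, c)^2 \to 0$, which reduces to establishing that $\mathbb{E}[(H^L_m)^2]/f(\beta; m, L, c)^2 \to 1$.

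First I would expand the square and interchange sums to obtain
\[
(H^L_m)^2 = \frac{1}{L^{2m}} \sum_{\bm{m}_1, \bm{m}_2 \in M_{m,L}} \sum_{i,j \geq 1} \prod_{k=1}^L Z_{k,i}^{m_{1k}} Z_{k,j}^{m_{2k}}.
\]
Conditioning on $\bm{V}$ and using the conditional independence of the $L$ groups, the inner joint moment factors across $k$. Splitting the double sum over $(i,j)$ into the diagonal $i=j$ and the off-diagonal $i \neq j$ parts, and using the Dirichlet moment formulas within each group (via the Gamma-Dirichlet algebra for $\bm{Z}_k \mid \bm{V} \sim \mathtt{Dir}(\beta V_1, \beta V_2, \ldots)$), I can write, in perfect analogy with the $L=1$ decomposition in the proof of Proposition \ref{LLN},
\[
\mathbb{E}[(H^L_m)^2] = D(\alpha, \beta) + \mathbb{E}\bigl[(\mathbb{E}[H^L_m \mid \bm{V}])^2\bigr] - C(\alpha, \beta),
\]
where $D$ collects the $i=j$ contributions (analogous to $\mathbb{E}[H_{2m}(\alpha,\beta)]$) and $C$ is the correction needed to complete the off-diagonal terms into a square of single sums.

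For the leading piece, the proof of Lemma \ref{homozygositygroupsmean} expresses $\mathbb{E}[H^L_m \mid \bm{V}]$ explicitly as a linear combination of the Dirichlet-process homozygosities $H_j(\alpha)$ with coefficients of the form $\frac{1}{L^m \prod_k (\beta)_{(m_k)}} a_j(\bm{m}, L) \beta^j$. So $\mathbb{E}[(\mathbb{E}[H^L_m \mid \bm{V}])^2]$ reduces to a weighted sum of $\mathbb{E}[H_i(\alpha) H_j(\alpha)]$. The asymptotics of the latter, $\sim \Gamma(i)\Gamma(j)/\alpha^{i+j-2}$, were already computed in the proof of Proposition \ref{LLN}; substituting these and collecting the leading behavior yields exactly $f(\beta; m, L, c)^2$, with the definitions of $a_j(\bm{m}, L)$ and $A_j(m, L)$ from the proposition statement organizing the bookkeeping.

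Finally I would show that both $D$ and $C$ are of order $\beta^{-(2m-1)}$, which is $o(f(\beta; m, L, c)^2) = o(\beta^{-2(m-1)})$. The diagonal piece $D$ is essentially of the same form as $\mathbb{E}[H^L_{2m}]$ and is controlled by applying the computation of Lemma \ref{homozygositygroupsmean} with $m$ replaced by $2m$; the correction $C$, involving $\mathbb{E}[\sum_i \prod_k (\mathbb{E}[Z_{k,i}^{m_k} \mid \bm{V}])^2]$, admits an analogous bound. The main obstacle will be the combinatorial bookkeeping: each group contributes its own Stirling expansion $(\beta V_i)_{(m_k)} = \sum_{j_k} {m_k \brack j_k}(\beta V_i)^{j_k}$, and after collecting across the $L$ groups one must carefully track how the summations over $M_{j, L}$ reproduce the coefficients $a_j(\bm{m}, L)$ and $A_j(m, L)$ built into $f(\beta; m, L, c)$. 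Once this indexing is in place, the dominant-balance comparison proceeds exactly as in the $L=1$ case.
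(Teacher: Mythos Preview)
Your proposal is correct and follows essentially the same approach as the paper, which simply states that the proof ``follows by a Chebyshev's inequality argument as in the $L = 1$ case, Proposition~\ref{LLN}.'' You have in fact supplied more detail than the paper does: the three-term decomposition of $\mathbb{E}[(H^L_m)^2]$, the identification of the leading piece via $\mathbb{E}[H_i(\alpha)H_j(\alpha)]$, and the $\beta^{-(2m-1)}$ bounds on the diagonal and correction terms are exactly the $L$-group analogues of the steps carried out explicitly in the proof of Proposition~\ref{LLN}.
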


The law of large numbers suggests a scaling of the form
\bal
\tilde{H}^L_{m}(\alpha, \beta) = \frac{\sqrt{\beta}}{f(\beta;m,L,c)}\left( H^L_{m}(\alpha, \beta) - \frac{1}{L^m \beta^{m-1}}  \sum_{j = 1}^{m} A_j(m,L) \frac{\Gamma(j)}{(\alpha/\beta)^{j-1}} \right), 
\eal
which we call the {\em scaled $m$th order homozygosity}. The homozygosity can be decomposed in a similar way as the $L = 1$ case. First note that
\bal
\E\left[ \sum_{m \in M_{m,L}} \prod_{k=1}^L \gamma_{ki}^{m_k}(\alpha, \beta) \middle \vert \bm{V} \right] = \sum_{j=1}^m A_j(m,L) \beta^j V_i^j, 
\eal
and define the level 1 and level 2 contributions, respectively, as 
\bal
X_{m,L}(\alpha, \beta) &= \frac{\sqrt{\beta}}{f(\beta;m,L,c)} \sum_{i=1}^\infty \frac{\sum_{\bm{m} \in M_{m,L}}  \prod_{k=1}^L \gamma_{ki}^{m_k}(\alpha, \beta) - \sum_{j=1}^m A_j(m,L) \beta^j V_i^j }{L^m \beta^m} \\
Y_{m,L}(\alpha, \beta) &= \frac{\sqrt{\beta}}{f(\beta;m,L,c)} \left( \sum_{i=1}^\infty \frac{\sum_{j=1}^m A_j(m,L) \beta^j V_i^j}{L^m \beta^m} - \frac{1}{L^m \beta^{m-1}}  \sum_{j = 1}^{m} A_j(m,L) \frac{\Gamma(j)}{(\alpha/\beta)^{j-1}} \right).
\eal
Then the scaled homozygosity can be decomposed as 
\bal
\tilde{H}^L_{m}(\alpha, \beta) &= X_{m,L}(\alpha, \beta) + Y_{m,L}(\alpha, \beta) \\
&\qquad + \sum_{\bm{m} \in M_{m,L}} \sqrt{\beta} \left( \prod_{k=1}^L \left(\frac{\beta}{\gamma_{k}(\beta)}\right)^{m_k} - 1 \right) \frac{ \frac{1}{L^m} \sum_{i=1}^\infty \prod_{k=1}^L \left( \frac{\gamma_{ki}(\alpha, \beta)}{\beta} \right)^{m_k} }{f(\beta;m,L,c)}.
\eal

\subsection{Level 1 Contribution for the HDP with $L$ Groups}

In this section, we prove the central limit theorem for $Y_{m,L}(\alpha, \beta)$. The proof is similar to the $L = 1$ case. 

\begin{lemma} \label{level1CLTgroups}
The level 1 contribution satisfies
\bal
Y_{m,L}(\alpha, \beta) \xrightarrow{D} N(0,\sigma_{1,L}^2),
\eal
where 
\bal
\sigma_{1,L}^2 = \frac{\sum_{1 \leq i,j \leq m} A_i(m,L)A_j(m,L) \frac{\Gamma(i+j) - \Gamma(i+1)\Gamma(j+1)}{c^{i+j-1}}}{\left( \sum_{j = 1}^{m} A_j(m,L) \frac{\Gamma(j)}{c^{j-1}}\right)^2}.
\eal
\end{lemma}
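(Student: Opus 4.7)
The plan is to follow the proof of Lemma \ref{Level1CLTHDP} very closely, with the coefficient $A_j(m, L)$ in the multigroup setting playing precisely the role that the unsigned Stirling number ${m \brack j}$ played in the single-group setting. The algebraic structure of $Y_{m,L}(\alpha, \beta)$ is parallel to that of $Y_m(\alpha, \beta)$, so the same reduction to the Joyce-Krone-Kurtz CLT should work.

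First, I would simplify $Y_{m,L}(\alpha, \beta)$. Since $\sum_{i=1}^\infty V_i^j = H_j(\alpha)$ and since $H_1(\alpha) = 1$ almost surely, the $j = 1$ terms in the two sums defining $Y_{m,L}$ cancel identically. After absorbing the prefactor $\tfrac{1}{L^m \beta^{m-1}}$ into $f(\beta; m, L, c)$ and factoring out $\sqrt{\beta/\alpha}$, one obtains the clean expression
\bal
Y_{m,L}(\alpha, \beta) = \sqrt{\tfrac{\beta}{\alpha}}\, \frac{1}{\sum_{j=1}^m A_j(m,L) \tfrac{\Gamma(j)}{c^{j-1}}} \sum_{j=2}^m A_j(m,L)\, \frac{\Gamma(j)}{(\alpha/\beta)^{j-1}}\, \tilde{H}_j(\alpha),
\eal
where $\tilde{H}_j(\alpha) = \sqrt{\alpha}\bigl(\tfrac{\alpha^{j-1}}{\Gamma(j)} H_j(\alpha) - 1\bigr)$ is the scaled single-level homozygosity from Proposition \ref{JKKCLT}.

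Second, I would invoke Proposition \ref{JKKCLT} to conclude that $(\tilde{H}_2(\alpha), \ldots, \tilde{H}_m(\alpha))$ converges jointly in distribution to a centered multivariate normal with covariance $\cov(H_i, H_j) = \tfrac{\Gamma(i+j) - \Gamma(i+1)\Gamma(j+1)}{\Gamma(i)\Gamma(j)}$. Because $Y_{m,L}(\alpha, \beta)$ is a linear combination of these scaled homozygosities whose coefficients converge to deterministic constants as $\alpha, \beta \to \infty$ with $\alpha/\beta \to c$, the continuous mapping theorem together with Slutsky's theorem delivers the one-dimensional Gaussian limit. The limiting variance is then computed directly: the prefactor contributes $\beta/\alpha \to 1/c$, and substituting the Joyce-Krone-Kurtz covariances yields
\bal
\sigma_{1,L}^2 = \frac{\sum_{2 \leq i,j \leq m} A_i(m,L) A_j(m,L) \tfrac{\Gamma(i+j) - \Gamma(i+1)\Gamma(j+1)}{c^{i+j-1}}}{\left( \sum_{j=1}^m A_j(m,L) \tfrac{\Gamma(j)}{c^{j-1}} \right)^2}.
\eal

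Finally, I would extend the index of summation in the numerator from $\{2, \ldots, m\}^2$ to $\{1, \ldots, m\}^2$, exploiting the identity $\Gamma(1+j) = \Gamma(2)\Gamma(j+1)$ which makes every $i=1$ or $j=1$ term vanish. This matches the form stated in the lemma. I do not anticipate any substantive obstacle: the entire argument is a transcription of Lemma \ref{Level1CLTHDP}, and the only point worth checking carefully is that the $j=1$ cancellation used to collapse the expression down to a linear combination of $\tilde{H}_j(\alpha)$ for $j \geq 2$ still goes through for arbitrary $L$. This follows from $\sum_i V_i = 1$ and $\Gamma(1) = 1$ alone, independently of the value of $A_1(m, L)$, so the proof goes through without any new ingredients.
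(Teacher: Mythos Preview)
Your proposal is correct and follows essentially the same approach as the paper: rewrite $Y_{m,L}(\alpha,\beta)$ as a linear combination of the scaled single-level homozygosities $\tilde{H}_j(\alpha)$, apply the Joyce--Krone--Kurtz multivariate CLT (Proposition~\ref{JKKCLT}), and read off the variance. The paper's proof is slightly terser but the logic, including the index extension at the end, is the same.
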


\begin{proof}
By direct computation, we can rewrite
\bal
&Y_{m,L}(\alpha, \beta) = \frac{\sqrt{\frac{\beta}{\alpha}} \sum_{j=2}^m A_j(m,L) \frac{\Gamma(j)}{(\alpha/\beta)^{j-1}} \tilde{H}_j(\alpha) }{\sum_{j = 1}^{m} A_j(m,L) \frac{\Gamma(j)}{c^{j-1}}},
\eal
where $\tilde{H}_j(\alpha) = \sqrt{\alpha}\left( \frac{\alpha^{j-1}}{\Gamma(j)} \sum_{k=1}^\infty V_k^j - 1 \right)$ is the scaled level 1 homozygosity. From this representation it follows that $Y_{m,D}$ is asymptotically normal with mean $0$ and variance
\bal
\sigma_{1,L}^2 &= \frac{\sum_{1 \leq i,j \leq m} A_i(m,L)A_j(m,L) \frac{\Gamma(i+j) - \Gamma(i+1)\Gamma(j+1)}{c^{i+j-1}}}{\left( \sum_{j = 1}^{m} A_j(m,L) \frac{\Gamma(j)}{c^{j-1}}\right)^2}. \qedhere
\eal
\end{proof}

\subsection{Level 2 Contribution for the HDP with $L$ Groups}

In this section, we prove the central limit theorem for $X_{m,L}(\alpha, \beta)$. The proof follows by the same characteristic function approach as in the $L = 1$ case, and so we omit the details but highlight the new aspects. 

\begin{lemma} \label{level2CLTgroups}
The level 2 contribution satisfies
\bal
X_{m,L}(\alpha, \beta) \xrightarrow{D} N(0,\sigma_{2,L}^2),
\eal
where 
\bal
\sigma_{2, L}^2 &= \frac{\sum_{j=1}^{2m} \tilde{A}_j(m,L) \frac{\Gamma(j)}{c^{j-1}} - \sum_{1 \leq i,j \leq m} A_i(m,L)A_j(m,L) \frac{\Gamma(i+j)}{c^{i+j-1}}}{\left( \sum_{j = 1}^{m} A_j(m,L) \frac{\Gamma(j)}{c^{j-1}}\right)^2}
\eal
and the coefficients $\{\tilde{A}_j(m,L)\}_{1 \leq j \leq 2m}$ are defined by
\bal
\tilde{A}_j(m,L) &= \sum_{\bm{m}_1 \in M_{m,L}} \sum_{\bm{m}_2 \in M_{m,L}} \left( \sum_{\bm{j} \in M_{j,L}} \prod_{k=1}^L {m_{1k} + m_{2k} \brack j_k} \right).
\eal
\end{lemma}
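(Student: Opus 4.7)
The plan is to mirror the characteristic function argument of Lemma \ref{Level2CLTHDP}, with the new ingredient being the handling of the product $\prod_{k=1}^L \gamma_{ki}^{m_k}(\alpha, \beta)$ across the $L$ groups. The crucial structural fact is that, conditionally on $\bm{V}$, the random variables $\{\gamma_{ki}(\alpha, \beta) : 1 \le k \le L,\ i \ge 1\}$ are mutually independent with $\gamma_{ki}(\alpha, \beta) \sim \Gam(\beta V_i, 1)$. Consequently, for each fixed $i$ the inner summand $\sum_{\bm{m} \in M_{m,L}} \prod_{k=1}^L \gamma_{ki}^{m_k}$ depends only on $\gamma_{1i}, \ldots, \gamma_{Li}$, so these summands are conditionally independent across $i$ given $\bm{V}$.

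First I would truncate $X_{m,L}(\alpha, \beta)$ at $\lfloor \alpha^2 \rfloor$ and show the tail vanishes in probability. The tail expectation is controlled using $\E[\prod_k \gamma_{ki}^{m_k} \mid \bm{V}] = \prod_k \frac{\Gamma(\beta V_i + m_k)}{\Gamma(\beta V_i)}$ together with the moment identity $\sum_{i=1}^\infty \E[V_i^j] = \frac{\Gamma(j)}{(\alpha+1)_{(j-1)}}$, which produces a geometric decay factor $(\alpha/(\alpha+j))^{\lfloor \alpha^2 \rfloor}$ exactly as in the proof of Lemma \ref{Level2CLTHDP}.

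Next I would condition on $\bm{V}$ and factor the characteristic function of the truncated piece as $\prod_{i=1}^{\lfloor \alpha^2 \rfloor}(1 + c_{\alpha,\beta,i})$, where the quadratic Taylor coefficient $c_{\alpha,\beta,i}$ is determined by the conditional variance
\begin{align*}
&\Var\left(\sum_{\bm{m} \in M_{m,L}} \prod_{k=1}^L \gamma_{ki}^{m_k} \middle\vert \bm{V}\right) \\
&= \sum_{\bm{m}_1, \bm{m}_2 \in M_{m,L}} \left[ \prod_{k=1}^L \frac{\Gamma(\beta V_i + m_{1k} + m_{2k})}{\Gamma(\beta V_i)} - \prod_{k=1}^L \frac{\Gamma(\beta V_i + m_{1k})}{\Gamma(\beta V_i)} \prod_{k=1}^L \frac{\Gamma(\beta V_i + m_{2k})}{\Gamma(\beta V_i)} \right],
\end{align*}
where the conditional independence of $\gamma_{1i}, \ldots, \gamma_{Li}$ is what allows the $k$-indexed factors to separate in the first term. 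Expanding each rising factorial via $\frac{\Gamma(\beta V_i + r)}{\Gamma(\beta V_i)} = \sum_{s=1}^r {r \brack s} (\beta V_i)^s$, multiplying across $k$, and collecting by the total exponent $j = \sum_k j_k$ then identifies the coefficient of $(\beta V_i)^j$ in the first product-sum as $\tilde{A}_j(m,L)$ and the coefficient of $(\beta V_i)^{p+q}$ in the second as $A_p(m,L)A_q(m,L)$.

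Finally I would apply Lemma \ref{as-LLN} to replace $\alpha^{j-1} \sum_{i=1}^{\lfloor \alpha^2 \rfloor} V_i^j$ by $\Gamma(j)$ almost surely, and bound the cubic Taylor remainder by $O(V_i^{3m-2})$ using Lemma \ref{Gammamoments}. A Borel-Cantelli argument identical to the one in Lemma \ref{Level2CLTHDP} then gives $\max_{1 \le i \le \lfloor \alpha^2 \rfloor} |c_{\alpha,\beta,i}| \to 0$, and Lemma \ref{expconv} yields $\prod_i(1+c_{\alpha,\beta,i}) \to \exp(-t^2 \sigma_{2,L}^2 / 2)$; bounded convergence lifts this from the conditional to the unconditional characteristic function. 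The main obstacle will be the combinatorial bookkeeping in the last display above: pushing the Stirling-number expansions through the $L$-fold product and organizing the result by total degree in $\beta V_i$ is what produces the coefficients $\tilde{A}_j(m,L)$ appearing in $\sigma_{2,L}^2$, and once that identification is in place the analytic steps carry over essentially verbatim from the single-group proof.
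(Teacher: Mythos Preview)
Your proposal is correct and follows essentially the same approach as the paper's proof: truncate at $\lfloor \alpha^2\rfloor$, condition on $\bm V$, expand the conditional characteristic function to second order, identify the coefficients $\tilde A_j(m,L)$ and $A_i(m,L)A_j(m,L)$ via the Stirling-number expansion of the rising factorials, and then invoke Lemma~\ref{as-LLN}, Lemma~\ref{expconv}, and dominated convergence. The paper's proof is terser---it records only the second-moment computation producing $\tilde A_j(m,L)$ and refers back to Lemma~\ref{Level2CLTHDP} for the remaining analytic steps---but the underlying argument is identical to yours.
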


\begin{proof}
It suffices to show that the characteristic function of the truncated sum 
\bal
X_{m, L, 1}(\alpha, \beta) &= \frac{\sqrt{\beta}}{f(\beta;m,L,c)} \sum_{r = 1}^{\lfloor \alpha^2 \rfloor} \frac{\sum_{\bm{m} \in M_{m,L}}  \prod_{k=1}^L \gamma_{kr}^{m_k}(\alpha, \beta) - \sum_{j=1}^m A_j(m,L) \beta^j V_r^j }{L^m \beta^m}
\eal
converges to the characteristic function of $N(0,\sigma_{2,L}^2)$. Conditioning on $\bm{V}$, we get
\bal
&\E\left( \exp\left( it \frac{\sqrt{\beta}}{f(\beta;m,L,c)} \sum_{r=1}^{\lfloor \alpha^2 \rfloor} \frac{\sum_{\bm{m} \in M_{m,L}}  \prod_{k=1}^L \gamma_{kr}^{m_k}(\alpha, \beta) - \sum_{j=1}^m A_j(m,L) \beta^j V_r^j }{L^m \beta^m} \right) \right) \\
&\qquad \qquad = \E\left[ \prod_{r=1}^{\lfloor \alpha^2 \rfloor} \left( 1 + c_{\alpha, \beta, r} \right) \right],
\eal
such that
\bal
c_{\alpha, \beta, r} = -\frac{t^2}{2} \left[ \frac{\sum_{j=1}^{2m} \tilde{A}_j(m,L) \beta^{j-1} V_r^j - \sum_{1 \leq i,j \leq m} A_i(m,L) A_j(m,L)\beta^{i+j-1} V_r^{i+j}}{ \left( \sum_{j = 1}^{m} A_j(m,L) \frac{\Gamma(j)}{c^{j-1}}\right)^2} \right] + O\left( \frac{1}{\sqrt{\beta}} \right),
\eal
where we computed the second moment
\bal
&\E\left[ \left( \sum_{\bm{m} \in M_{m,L}}  \prod_{k=1}^L \gamma_{kr}^{m_k}(\alpha, \beta) \right)^2 \middle\vert \bm{V} \right] = \sum_{\bm{m}_1 \in M_{m,L}} \sum_{\bm{m}_2 \in M_{m,L}} \prod_{k=1}^L \E\left[ \gamma_{kr}^{m_{1k} + m_{2k}}(\alpha, \beta) \middle\vert \bm{V} \right] \\
& = \sum_{\bm{m}_1 \in M_{m,L}} \sum_{\bm{m}_2 \in M_{m,L}} \sum_{j=1}^{2m} \left( \sum_{\bm{j} \in M_{j,L}} \prod_{k=1}^L {m_{1k} + m_{2k} \brack j} \right) \beta^j V_r^j = \sum_{j=1}^{2m} \tilde{A}_j(m,D) \beta^j V_r^j
\eal
with the coefficients $\{\tilde{A}_j(m,L)\}_{1 \leq j \leq 2m}$ defined as in the lemma statement. Applying Lemma \ref{as-LLN} allows us to identify the asymptotic variance 
\bal
\lim_{\alpha, \beta \to \infty, \alpha/\beta \to c} \sum_{r=1}^{\lfloor \alpha^2 \rfloor} c_{\alpha, \beta, r} = -\frac{t^2}{2}\left( \frac{\sum_{j=1}^{2m} \tilde{A}_j(m,L) \frac{\Gamma(j)}{c^{j-1}} - \sum_{1 \leq i,j \leq m} A_i(m,L) A_j(m,L) \frac{\Gamma(i+j)}{c^{i+j-1}}}{ \left( \sum_{j = 1}^{m} A_j(m,L) \frac{\Gamma(j)}{c^{j-1}}\right)^2} \right).
\eal
Finally by dominated convergence, the unconditional characteristic function of $X_{m, L, 1}(\alpha, \beta)$ also converges to the characteristic function of $N(0,\sigma_{2,L}^2)$, and the result follows. 
\end{proof}

\subsection{Joint Convergence in the HDP with $L$ Groups}

For any $1\leq k\leq L$, let 
\bal
Z_k(\alpha,\beta) = \sqrt{\beta}\left( \sum_{i=1}^\infty \frac{\gamma_{ki}(\alpha, \beta)}{\beta} - 1 \right)
\eal
for all $1 \leq k \leq L$. We will need the following joint convergence result. The proof uses characteristic functions and is similar to the $L = 1$ case, and so we only highlight the differences. 

\begin{lemma} \label{JointCLTDgroups}
For all $1 \leq k\leq L$, we have the following multivariate central limit theorem 
\bal
(Z_k(\alpha, \beta), X_{m,L}(\alpha, \beta)) \xrightarrow{D} N(\bm{0}, \bm{\Sigma^{(k)}})
\eal
as $\alpha, \beta \to \infty$ such that $\frac{\alpha}{\beta} \to c$, where the covariance matrix is given by
\bal
\bm{\Sigma^{(k)}} = \begin{pmatrix}
1 & \sum_{\bm{m} \in M_{m,L}} m_k C_{\bm{m}} \\ \sum_{\bm{m} \in M_{m,L}} m_k C_{\bm{m}} & \sigma_{2,L}^2
\end{pmatrix}, 
\eal
$\sigma_{2,L}^2$ is the level 2 variance from Lemma \ref{level2CLTgroups}, and the constants $\{C_{\bm{m}}\}$ are defined by
\bal
C_{\bm{m}} = \frac{\sum_{j=1}^m a_j(\bm{m},L) \frac{\Gamma(j)}{c^{j-1}}}{\sum_{j=1}^m A_j(m,L) \frac{\Gamma(j)}{c^{j-1}}}.
\eal
\end{lemma}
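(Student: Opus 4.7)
The plan is to adapt the characteristic function method of Lemma \ref{JointCLT} to the multigroup setting. I would first introduce the truncated versions
\bal
\hat{Z}_k(\alpha,\beta) = \sqrt{\beta} \sum_{i=1}^{\lfloor \alpha^2 \rfloor} \left( \frac{\gamma_{ki}(\alpha,\beta)}{\beta} - V_i \right)
\eal
and $X_{m,L,1}(\alpha,\beta)$ as in the proof of Lemma \ref{level2CLTgroups}, and argue, as in Lemmas \ref{Level2CLTHDP} and \ref{JointCLT}, that the tails vanish in probability, so that joint convergence of $(\hat Z_k, X_{m,L,1})$ implies the claim. Conditioning on $\bm V$ and using independence of $\{\gamma_{\cdot,i}(\alpha,\beta)\}$ across $i$ together with conditional independence across groups at each fixed $i$, the characteristic function $\mathbb{E}[\exp(ir\hat Z_k + is X_{m,L,1})]$ factors as $\mathbb{E}\bigl[\prod_{i=1}^{\lfloor \alpha^2 \rfloor}\mathbb{E}(\exp(\cdots)\mid \bm V)\bigr]$, and each factor is Taylor expanded to second order in $1/\sqrt\beta$.

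The essential new ingredient is the conditional cross second moment. Using $\mathbb{E}[\gamma_{ki}-\beta V_i\mid\bm V]=0$ and conditional independence of $\{\gamma_{k'i}\}_{k'}$ given $\bm V$, one has
\bal
\mathbb{E}\!\left[(\gamma_{ki}-\beta V_i)\sum_{\bm m\in M_{m,L}}\prod_{k'=1}^L \gamma_{k'i}^{m_{k'}}\,\Big|\,\bm V\right] &= \sum_{\bm m\in M_{m,L}}\mathbb{E}[(\gamma_{ki}-\beta V_i)\gamma_{ki}^{m_k}\mid\bm V]\prod_{k'\neq k}\mathbb{E}[\gamma_{k'i}^{m_{k'}}\mid\bm V]\\
&= \sum_{\bm m \in M_{m,L}} m_k\prod_{k'=1}^L\frac{\Gamma(\beta V_i+m_{k'})}{\Gamma(\beta V_i)},
\eal
where the first factor collapses via $\Gamma(\beta V_i+m_k+1)-\beta V_i\,\Gamma(\beta V_i+m_k) = m_k\,\Gamma(\beta V_i+m_k)$, exactly the same simplification that produces the cross term in Lemma \ref{JointCLT}. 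Expanding each rising factorial through ${\Gamma(\beta V_i+m_{k'})}/{\Gamma(\beta V_i)}=\sum_{j_{k'}=1}^{m_{k'}}{m_{k'}\brack j_{k'}}\beta^{j_{k'}}V_i^{j_{k'}}$ and collecting terms across the multi-index $\bm j\in M_{j,L}$ yields $\prod_{k'=1}^L\Gamma(\beta V_i+m_{k'})/\Gamma(\beta V_i)=\sum_{j=1}^m a_j(\bm m,L)\,\beta^j V_i^j$.

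Summing over $i\leq\lfloor\alpha^2\rfloor$ and applying Lemma \ref{as-LLN} to the sums $\sum_i V_i^j$, the $rs$ term in the cumulant expansion converges to $-rs\sum_{\bm m\in M_{m,L}} m_k C_{\bm m}$ after normalising by $L^m\beta^{m-1} f(\beta;m,L,c)=\sum_j A_j(m,L)\Gamma(j)/c^{j-1}$. The diagonal $r^2$ and $s^2$ contributions are handled by the same computations already carried out for $Z_k(\alpha,\beta)\Rightarrow N(0,1)$ (via Lemma \ref{Fen7.7}) and for Lemma \ref{level2CLTgroups}, giving $1$ and $\sigma_{2,L}^2$ on the diagonal of $\bm\Sigma^{(k)}$. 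Lemma \ref{expconv} then identifies the limiting conditional characteristic function as $\exp(-\tfrac12(r,s)\bm\Sigma^{(k)}(r,s)^T)$, and dominated convergence removes the conditioning. The main obstacle is the multi-index bookkeeping in the cross term, verifying that the product over $k'$ precisely produces the coefficients $a_j(\bm m,L)$ and thus the constants $C_{\bm m}$, but this is dictated by the same Stirling expansion already exploited in Lemmas \ref{homozygositygroupsmean} and \ref{level2CLTgroups}.
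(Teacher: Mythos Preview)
Your proposal is correct and follows essentially the same approach as the paper: the paper's proof explicitly states that the only difference from the $L=1$ case (Lemma \ref{JointCLT}) lies in the cross summand, and then performs exactly the computation you outline, using conditional independence across groups to factor $\mathbb{E}[(\gamma_{ki}-\beta V_i)\prod_{k'}\gamma_{k'i}^{m_{k'}}\mid\bm V]$, the identity $\Gamma(\beta V_i+m_k+1)-\beta V_i\,\Gamma(\beta V_i+m_k)=m_k\,\Gamma(\beta V_i+m_k)$, and the Stirling expansion to obtain $\sum_j a_j(\bm m,L)\beta^j V_i^j$ and hence $C_{\bm m}$. Your additional remarks on truncation, Lemma \ref{expconv}, and dominated convergence are implicit in the paper's reference to the $L=1$ proof.
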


\begin{proof}
The only difference from the proof of the $L = 1$ case is in computation of the third (cross) summand, which is computed to be
\bal
&\E\left[ \left( \frac{\gamma_{ki}(\alpha, \beta) - \E[\gamma_{ki}(\alpha, \beta) \mid \bm{V}]}{\beta} \right) \right. \\
&\qquad \qquad \left. \times \left( \frac{ \sum_{\bm{m} \in M_{m,L}} \prod_{j=1}^L \gamma_{ji}^{m_j}(\alpha, \beta) -  \E\left[ \sum_{\bm{m} \in M_{m,L}} \prod_{j=1}^L \gamma_{ji}^{m_j}(\alpha, \beta) \mid \bm{V} \right]}{L^m \beta^m} \right) \right] \\
&\quad = \frac{1}{L^m \beta^{m+1}}\sum_{\bm{m} \in M_{m,L}} \left[ \left(\prod_{j \neq k} \frac{\Gamma(\beta V_i + m_j)}{\Gamma(\beta V_i)}\right) \left( \frac{\Gamma(\beta V_i + m_k+ 1) - \beta V_i \Gamma(\beta V_i + m_k)}{\Gamma(\beta V_i)}\right) \right] \\
&\quad = \frac{1}{L^m \beta^{m+1}}\sum_{\bm{m} \in M_{m,L}} m_k \sum_{j=1}^m a_j(\bm{m},L) \beta^j V_i^j.
\eal
Therefore the asymptotic covariance between $X_{m,L}(\alpha, \beta)$ and $Z_k(\alpha, \beta)$ is
\bal
&\frac{\beta}{f(\beta;m,L,c)} \sum_{i=1}^\infty \frac{1}{L^m \beta^{m+1}}\sum_{\bm{m} \in M_{m,L}} m_k\sum_{j=1}^m a_j(\bm{m},L) \beta^j V_i^j \\
&\qquad \qquad = \frac{\sum_{\bm{m} \in M_{m,L}} m_k \sum_{j=1}^m a_j(\bm{m},L) \frac{\Gamma(j)}{(\alpha/\beta)^{j-1}}}{\sum_{j=1}^m A_j(m,L) \frac{\Gamma(j)}{c^{j-1}}} \to \sum_{\bm{m} \in M_{m,L}} m_k C_{\bm{m}},
\eal
as $\alpha, \beta \to \infty$ such that $\frac{\alpha}{\beta} \to c$. 
\end{proof}

\subsection{Proof of the CLT for the Homozygosity of the HDP with $L$ Groups}

We are now in a position to prove the central limit theorem for $H^L_{m}(\alpha, \beta)$. 

\begin{proof}[Proof of Theorem \ref{CLTHomozygosityHDPgroups}]
Recall that for $m \geq 2$, 
\bal
\tilde{H}^L_{m}(\alpha, \beta) &= X_{m,L}(\alpha, \beta) + Y_{m,L}(\alpha, \beta) \\
&\qquad + \sum_{\bm{m} \in M_{m,L}} \sqrt{\beta} \left( \prod_{k=1}^L \left(\frac{\beta}{\gamma_{k}(\beta)}\right)^{m_k} - 1 \right) \frac{ \frac{1}{L^m} \sum_{i=1}^\infty \prod_{k=1}^L \left( \frac{\gamma_{ki}(\alpha, \beta)}{\beta} \right)^{m_k} }{f(\beta;m,L,c)}.
\eal
By Lemmas \ref{level1CLTgroups} and \ref{level2CLTgroups},
\bal
X_{m,L}(\alpha, \beta) \xrightarrow{D} X \sim N(0, \sigma_{2,L}^2) \quad \text{and} \quad Y_{m,L}(\alpha, \beta) &\xrightarrow{D} Y \sim N(0, \sigma_{1,L}^2)
\eal
as $\alpha, \beta \to \infty$ such that $\frac{\alpha}{\beta} \to c$. By the law of large numbers,
\bal
\frac{ \frac{1}{L^m} \sum_{i=1}^\infty \prod_{k=1}^L \left( \frac{\gamma_{ki}(\alpha, \beta)}{\beta} \right)^{m_k} }{f(\beta;m,L,c)} \xrightarrow{D} \frac{ \sum_{j=1}^m a_j(\bm{m},L) \frac{\Gamma(j)}{c^{j-1}} }{ \sum_{j=1}^m A_j(m,L) \frac{\Gamma(j)}{c^{j-1}} }= C_{\bm{m}}
\eal
as $\alpha, \beta \to \infty$ such that $\frac{\alpha}{\beta} \to c$, where $C_{\bm{m}}$ are the constants defined in Lemma \ref{JointCLTDgroups}. 
Next we can rewrite 
\bal
&\sum_{\bm{m} \in M_{m,L}} \sqrt{\beta} \left( \prod_{k=1}^L \left(\frac{\beta}{\gamma_{k}(\beta)}\right)^{m_k} - 1 \right) \frac{ \frac{1}{L^m} \sum_{i=1}^\infty \prod_{k=1}^L \left( \frac{\gamma_{ki}(\alpha, \beta)}{\beta} \right)^{m_k} }{f(\beta;m,L,c)} \\
&= \sum_{\bm{m} \in M_{m,L}} C_{\bm{m}} \sqrt{\beta} \left( \prod_{k=1}^L \left(\frac{\beta}{\gamma_{k}(\beta)}\right)^{m_k} - 1 \right) \\
&\qquad + \sum_{\bm{m} \in M_{m,L}} \sqrt{\beta} \left( \prod_{k=1}^L \left(\frac{\beta}{\gamma_{k}(\beta)}\right)^{m_k} - 1 \right) \left(\frac{ \frac{1}{L^m} \sum_{i=1}^\infty \prod_{k=1}^L \left( \frac{\gamma_{ki}(\alpha, \beta)}{\beta} \right)^{m_k} }{f(\beta;m,L,c)} - C_{\bm{m}} \right).
\eal
The second summand converges in distribution to $0$. By Lemma \ref{Fen7.7}, 
\bal
Z_k(\alpha, \beta) = \sqrt{\beta}\left( \sum_{i=1}^\infty \frac{\gamma_{ki}(\alpha, \beta)}{\beta} - 1 \right) = \sqrt{\beta}\left( \frac{\gamma_{k}(\beta)}{\beta} - 1 \right) \xrightarrow{D} N(0,1)
\eal
as $\beta \to \infty$, for all $1 \leq k \leq L$. Using the fact that $(\gamma_1(\beta), \ldots, \gamma_L(\beta))$ are conditionally independent given $\bm{V}$, we have the joint convergence 
\bal
\sqrt{\beta}\left[ \begin{pmatrix} \frac{\gamma_1}{\beta} \\ \vdots \\ \frac{\gamma_L}{\beta} \end{pmatrix} - \begin{pmatrix} 1 \\ \vdots \\ 1 \end{pmatrix} \right] \xrightarrow{D} N(\bm{0}, \bm{I}_L)
\eal
as $\beta \to \infty$, where $\bm{I}_L$ is the $L \times L$ identity matrix. Define the function $h : \R^L \to \R$ by
\bal
h(x_1, \ldots, x_L) = \sum_{\bm{m} \in M_{m,L}} C_{\bm{m}} \prod_{k=1}^L x_k^{-m_k}.
\eal
By direct computation,
\bal
\nabla h(1, \ldots, 1)^T \cdot I \cdot \nabla h(1, \ldots, 1) & = \sum_{k=1}^L \left( \sum_{\bm{m} \in M_{m,L}} C_{\bm{m}} m_k \right)^2,
\eal
and so by the multivariate delta method, 
\bal
\sum_{\bm{m} \in M_{m,L}} C_{\bm{m}} \sqrt{\beta}\left( \prod_{k=1}^L \left( \frac{\beta}{\gamma_k(\beta)} \right)^{m_k} - 1 \right) 
&\xrightarrow{D} N\left(0, \sum_{k=1}^L \left( \sum_{\bm{m} \in M_{m,L}} C_{\bm{m}} m_k \right)^2 \right)
\eal
as $\beta \to \infty$. Combining the above, we get that
\bal
&\sum_{\bm{m} \in M_{m,L}} \sqrt{\beta} \left( \prod_{k=1}^L \left(\frac{\beta}{\gamma_{k}(\beta)}\right)^{m_k} - 1 \right) \frac{ \sum_{i=1}^\infty \prod_{k=1}^L \left( \frac{\gamma_{ki}(\alpha, \beta)}{\beta} \right)^{m_k} }{f(\beta;m,L,c)} \\& \qquad \xrightarrow{D} W \sim N\left(0, \sum_{k=1}^L \left( \sum_{\bm{m} \in M_{m,L}} C_{\bm{m}} m_k\right)^2 \right) 
\eal
as $\alpha, \beta \to \infty$ such that $\frac{\alpha}{\beta} \to c$.  

By Lemma \ref{JointCLTDgroups}, $(Z_k(\alpha, \beta), X_{m,L}(\alpha, \beta)) \xrightarrow{D} N(\bm{0}, \bm{\Sigma^{(k)}})$ for all $1 \leq k\leq L$.
Finally $(X_m^L(\alpha, \beta), Y_m^L(\alpha, \beta)) \xrightarrow{D} N\left(\bm{0}, \begin{pmatrix} \sigma_{2,L}^2 & 0 \\ 0 & \sigma_{1,L}^2 \end{pmatrix} \right)$ and $Y_{m,L}(\alpha, \beta)$ is independent of the vector $(Z_1(\alpha, \beta), \ldots, Z_L(\alpha, \beta))$. 
Therefore for all $m \geq 2$,
\bal
\tilde{H}^L_{m}(\alpha, \beta) \xrightarrow{D} X + Y + W \sim N(0, \sigma_{c,L}^2),
\eal
as $\alpha, \beta \to \infty$ such that $\frac{\alpha}{\beta} \to c$, with variance
\bal
\sigma_{c, D}^2 
&= \sigma_{2,L}^2 + \sigma_{1,L}^2 + \sum_{k=1}^L \left( \sum_{\bm{m} \in M_{m,L}} C_{\bm{m}} m_k \right)^2 \\
&\qquad - 2\sum_{k=1}^L \left(\sum_{\bm{m}_1 \in M_{m,L}} C_{\bm{m}_1} \bm{m}_{1,k} \right)\left(\sum_{\bm{m}_2 \in M_{m,L}} C_{\bm{m}_2} \bm{m}_{2,k} \right) \\
&= \frac{\sum_{j=1}^{2m} \tilde{A}_j(m,D) \frac{\Gamma(j)}{c^{j-1}} - \sum_{1 \leq i,j \leq m} A_i(m,L)A_j(m,L) \frac{\Gamma(i+1)\Gamma(j+1)}{c^{i+j-1}}}{\left( \sum_{j = 1}^{m} A_j(m,L) \frac{\Gamma(j)}{c^{j-1}}\right)^2} \\
&\qquad - \sum_{k=1}^L \left( \sum_{\bm{m} \in M_{m,L}} C_{\bm{m}} m_k \right)^2. \qedhere
\eal
\end{proof}

\section{Final Remarks} \label{FinalRemarks} 

\subsection{} From the gamma representation, we can see that the level one process leads to a random partition of the interval $[0,\beta]$. Replacing the level one process with other random discrete distributions generates different random partitions. It is thus natural to extend our results along these lines. One natural model is the Pitman-Yor process. Furthermore one can even replace the level two process with a Pitman-Yor process, thus giving the hierarchical Pitman-Yor process. Additional developments could involve the more general hierarchical models studied in \cite{CLOP19}.   

\subsection{} Observe that the CLTs in Theorem~\ref{CLTHomozygosityHDP} and Theorem~\ref{CLTHomozygosityHDPgroups} are for an individual $\tilde{H}^L_m(\alpha, \beta)$, with $m \geq 2$ fixed. An immediate extension is a multivariate CLT for 
\bal
\left(\tilde{H}^L_2(\alpha, \beta), \tilde{H}^L_3(\alpha, \beta), \ldots \right).
\eal
This will pave the way for the establishment of a CLT for the hierarchical sampling formula.  

\subsection{} Other possible extensions include multi-level models with more than two hierarchies and hierarchical normalized random measures with independent increments. We intend to pursue these in subsequent research. 

\section*{Acknowledgments}
We wish to thank the two referees for their insightful comments and suggestions which significantly helped to improve the presentation of the paper.


\Address

\end{document}